\theoremstyle{definition}
\newtheorem{definition}{Definition}[section]
\newtheorem{ex}[definition]{Example}
\newtheorem{rem}[definition]{Remark}
\theoremstyle{plain}
\newtheorem{prop}[definition]{Proposition}
\newtheorem{lem}[definition]{Lemma}
\newtheorem{coro}[definition]{Corollary}
\newtheorem{teo}[definition]{Theorem}
\newfont{\bbb}{msbm10 scaled\magstephalf}     
\def\reg{\operatorname{reg}}
\def\sing{\operatorname{sing}}
\def\Hess{\operatorname{Hess}}
\def\rank{\operatorname{rank}}
\title{Axial curvatures for corank 1 singular $n$-manifolds in $\mathbb R^{n+k}$}
\author{P. Benedini Riul, J. L. Deolindo-Silva, R. Oset Sinha}
\date{}
\address{Departamento de Estat\'istica, F\'isica e Matem\'atica, 
Universidade Federal de S\~{a}o Jo\~{a}o Del-Rei - UFSJ, Campus Alto Paraopeba, 36497-899, Ouro Branco, Brazil}
\email{benedini@ufsj.edu.br}
\address{Departamento de Matem\'atica.
Universidade Federal de Santa Catarina - UFSC, 89036-004 -
Blumenau-SC,  Brazil}
\email{jorge.deolindo@ufsc.br}
\address{Departament de Matem\`{a}tiques,
Universitat de Val\`encia, Campus de Burjassot, 46100 Burjassot,
Spain}
\email{raul.oset@uv.es}
\thanks{Work of J. L. Deolindo-Silva partially supported by FAPESP Grant number 2019/07316-0.}
\thanks{Work of R. Oset Sinha partially supported by Grant PGC2018-094889-B-100 funded by MCIN/AEI/ 10.13039/501100011033 and by ``ERDF A way of making Europe"}
\subjclass[2000]{Primary 57R45; Secondary 53A07, 58K05} \keywords{axial curvature, singular manifolds in Euclidean spaces, second order geometry, umbilic curvature, frontals}
\begin{document}

\begin{abstract}
For singular $n$-manifolds in $\mathbb R^{n+k}$ with a corank 1 singular point at $p\in M^n_{\sing}$ we define up to $l(n-1)$ different axial curvatures at $p$, where $l=\min\{n,k+1\}$. These curvatures are obtained using the curvature locus (the image by the second fundamental form of the unitary tangent vectors) and are therefore second order invariants. In fact, in the case $n=2$ they generalise all second order curvatures which have been defined for frontal type surfaces. We relate these curvatures with the principal curvatures in certain normal directions of an associated regular $(n-1)$-manifold contained in $M^n_{\sing}$. We obtain many interesting geometrical interpretations in the cases $n=2,3$. For instance, for frontal type 3-manifolds with 2-dimensional singular set, the Gaussian curvature of the singular set can be expressed in terms of the axial curvatures. Similarly for the curvature of the singular set when it is 1-dimensional. Finally, we show that all the umbilic curvatures which have been defined for singular manifolds up to now can be seen as the absolute value of one of our axial curvatures.  
\end{abstract}

\maketitle

\section{Introduction}

In the last 15 years the study of the differential geometry of singular surfaces has flourished to be an area of great interest for researchers from many different backgrounds. These objects are cherished by differential geometers as much as by singularists. Even contact topologists are encountering singular objects when studying wave fronts and frontals, and Gauss-Bonnet type theorems provide a link to the geometry. Singularity Theory has proved to be the ideal framework to study these objects and its approach for regular surfaces (see the recent book \cite{Livro}) can be adapted for the singular case. Ways of studying the geometry of the singular surface are relating it to the geometry of a regular surface from which it is obtained by an orthogonal projection (\cite{BenediniOset2,BallesterosTari,OsetSinhaTari}), or studying its contact with planes and spheres (\cite{Oset/Saji,OsetSinhaTari2}). Another way is to define curvatures which give information about the surface. For example, in \cite{SUY} a singular and a limiting normal curvature were defined for certain frontal type singularities and a Gauss-Bonnet theorem was proved using the former. In \cite{HHNUY} the intrinsity of this kind of invariants is studied and in \cite{teramoto} the limiting normal curvature is interpreted as a principal curvature. 

In \cite{MartinsBallesteros} the authors defined the curvature parabola at a singular corank 1 point in a surface in $\mathbb R^3$. This is the image by the second fundamental form of the unitary tangent vectors and plays an analogous role to the curvature ellipse defined by Little in \cite{Little}. Using this parabola they defined an umbilic curvature which captures the round geometry of the surface and generalises the limiting normal curvature defined for cuspidal edges in \cite{MartinsSaji}. In \cite{OsetSaji}, the third author and K. Saji, using the curvature parabola, defined for any corank 1 singular surface an axial curvature which generalises the singular curvature defined in \cite{SUY}. The axial curvature was defined using the properties of the parabola, so it was not clear until now how to generalise this to higher dimensions.

For $p\in M^n_{\sing}$ a corank 1 singular point in an $n$-manifold in $\mathbb R^{n+k}$, the curvature locus can have many different topological types and can even have singularities. These types of loci have been studied in \cite{Benedini/Sinha/Ruas} for $n=2$ and $k=2$ and in \cite{BenediniOset2,Benedini/Sinha/Ruas2,BenediniRuasSacramento} for $n=3$ and $k=2$.

In this paper, using the curvature locus, we define axial curvatures for any corank 1 singular $n$-manifold in $\mathbb R^{n+k}$ which can be seen as principal curvatures. These curvatures generalise the umbilic and axial curvatures for surfaces in $\mathbb R^3$. We define a special adapted frame of axial vectors $\{v_a^1,\ldots,v_a^l\}$, where $l=\min\{n,k+1\}$, in the normal space $N_pM$ and for each axial vector we define the axial curvatures as the critical values of the projection of the curvature locus onto the direction of the corresponding axial vector.

In Section \ref{defs}, we give the main definitions and show that there can be up to $l(n-1)$ different axial curvatures at a point $p\in M^n_{\sing}$. Section \ref{sups} is devoted to the particular case of surfaces in $\mathbb R^n$. We give formulas for the axial curvatures and give geometrical interpretations for them. For example, we show that for certain surfaces there is a distinguished curve on them with curvature $\kappa$ which satisfies $\kappa^2=(\kappa_{a_1})^2+(\kappa_{a_2})^2$, where $\kappa_{a_1}$ and $\kappa_{a_2}$ are the primary and secondary axial curvatures which generalise the axial and umbilic curvatures from \cite{OsetSaji} and \cite{MartinsBallesteros}, respectively. In Section \ref{M3} we study 3-manifolds in $\mathbb R^{3+k}$. For $k=1,2$ we define adapted frames and prove the following elegant relation: For a corank 1 singular manifold $M^{n}_{\sing}\subset\mathbb R^{n+k}$, it is possible to take a Monge form
\begin{equation}\label{Monge-final}
 f(x_1,\ldots,x_n)=(x_1,\ldots,x_{n-1},f_{1}(x_1,\ldots,x_n),\ldots,f_{k+1}(x_1,\ldots,x_n)),  
\end{equation}
where $\frac{\partial f_{\ell}}{\partial x_i}=0$ for $\ell=1,\ldots,k+1$ and $i=1,\ldots,n$. We show that the axial curvatures corresponding to the axial vector $v_a^i$ coincide with the $v_a^i$-principal curvatures of the regular $(n-1)$-manifold $M_{\reg}^{n-1}$ given by $f(x_1,\ldots,x_{n-1},0)$. In this sense, the axial curvatures can be understood as principal curvatures of singular manifolds. Using this relation we obtain interesting geometrical interpretations. For instance, for frontal type 3-manifolds with a smooth 2-dimensional singular set, the Gaussian curvature of the singular set can be expressed in terms of the axial curvatures. The same can be done for the curvature of the singular set when it is 1-dimensional. This opens countless directions in which to study the differential geometry of higher dimensional frontals. Finally, in Section \ref{umbs} we give an overview of all the different umbilic curvatures which have been defined up to now (both in the regular and singular setting), which are related to the umbilical focus of centers of spheres with degenerate contact with the manifold, and prove that all of them can be obtained as the absolute value of an axial curvature.


\section{The geometry of singular $n$-manifolds in $\mathbb R^{n+k}$}\label{prelim}

In this section we review the basic definitions and results related to the second order geometry of corank 1 singular $n$-manifolds in $\mathbb R^{n+k}$. For more details see \cite{BenediniOset2,Benedini/Sinha/Ruas,BenediniRuasSacramento,MartinsBallesteros}.

Let $M^n_{\sing}\subset \mathbb R^{n+k}$ be a $n$-manifold with a singularity of corank 1. We can consider $M^n_{\sing}$ as the image of a smooth map $g : \tilde{M}\to\mathbb R^{n+k}$ from a smooth regular $n$-manifold $\tilde{M}$ whose differential map has rank $\geq n-1$ at any point such that $g(q)=p$. 
Consider $\phi:U\rightarrow\mathbb{R}^{n}$ a local coordinate system defined in an open neighborhood $U$ of $q$ at $\tilde{M}$. Using this construction, we may consider a local parametrisation $f=g\circ\phi^{-1}$ of $M^{n}_{\sing}$ at $p$ (see the diagram below).
$$
\xymatrix{
\mathbb{R}^{n}\ar@/_0.7cm/[rr]^-{f} & U\subset\tilde{M}\ar[r]^-{g}\ar[l]_-{\phi} & M^{n}_{\sing}\subset\mathbb{R}^{n+k}
}
$$
Considering $dg_{q}:T_{q}\tilde{M}\rightarrow T_{p}\mathbb{R}^{n+k}$ the differential map of $g$ at $q$, the {\it tangent space}, $T_{p}M^{n}_{\sing}$, at $p$ is given by $\mbox{Im}\ dg_{q}$ that degenerates to a $(n-1)$-space and the {\it normal space} at $p$, $N_{p}M^{n}_{\sing}$, is the $k+1$-space of orthogonal directions to $T_p M^n_{\sing}$ in $\mathbb R^{n+k}$ such that  $T_{p}M^{n}_{\sing}\oplus N_{p}M^{n}_{\sing}=T_{p}\mathbb{R}^{n+k}$. 


 The {\it first fundamental form} of $M^{n}_{\sing}$ at $p$, $I:T_{q}\tilde{M}\times T_{q}\tilde{M}\rightarrow \mathbb{R}$ is given by
$I(u,v)=\langle dg_{q}(u),dg_{q}(v)\rangle$, $\forall\ u,v\in T_{q}\tilde{M}$. 
This induces a pseudometric in $T_q\mathbb R^n$ since the image of non-zero vectors can be zero. The {\it second fundamental form} of $M^{n}_{\sing}$ at $p$, $II:T_{q}\tilde{M}\times T_{q}\tilde{M}\rightarrow N_{p}M^{n}_{\sing}$  is given by
$II(u,v)=\pi_2(d^2f_{\phi(q)}(d\phi(u,v))),$
where  $\pi_2:T_{p}\mathbb{R}^{n+k}\rightarrow N_{p}M^{n}_{\sing}$ is the orthogonal projection and is extended to the whole space uniquely as a symmetric bilinear map.

Given a normal vector $\nu\in N_{p}M^{n}_{\sing}$, the {\it second fundamental form along $\nu$}, $II_{\nu}:T_{q}\tilde{M}\times T_{q}\tilde{M}\rightarrow\mathbb{R}$ is given by $II_{\nu}(u,v)=\langle II(u,v),\nu\rangle$, for all $u,v\in T_{q}\tilde{M}$. 

Let $C_{q}= \{u\in T_q\tilde{M} \;|\; I(u,u)= 1\}$ be the subset of unit tangent vectors and let $\eta:C_{q}\rightarrow N_{p}M^{n}_{\sing}$ be the map given by $\eta(u)=II(u,u)$. The \emph{curvature locus} of $M^{n}_{\sing}$ at $p$, denoted by $\Delta_{p}$, is the subset $\eta(C_q)$. The curvature locus does not depend on the choice of the local coordinates
of $\tilde{M}$. Define $Aff_p$ as the {\it affine space} of minimal dimension which contains $\Delta_p$.


\subsection{Singular surfaces in $\mathbb R^{2+k}$, $k\geq1$}\label{singular}

Let $M^{2}_{\sing}$ be a corank $1$ surface at $p$.  
If $\{\partial_x,\partial_y\}$ is a basis for $T_{q}\tilde{M}$ and using the parametrisation $f$, the coefficients of the first fundamental form are:
$E(q) = I(\partial_x,\partial_x) =\langle f_x,f_x\rangle(q)$, $F(q) = I(\partial_x,\partial_y) =\langle f_x,f_y\rangle(q)$ and $G(q) = I(\partial_y,\partial_y) =\langle f_y,f_y\rangle(q)$
and taking $u = a\partial_x + b\partial_y\in T_q\mathbb R^2$, 
$$
I(u,u) = a^2E(q) + 2abF(q) + b^2G(q).
$$
The second fundamental form of $M^{2}_{\sing}$ at $p$, $II:T_{q}\tilde{M}\times T_{q}\tilde{M}\rightarrow N_{p}M^{2}_{\sing}$  is given by
$$
\begin{array}{c}
     II(\partial_{x},\partial_{x})=\pi_2(f_{xx}(\phi(q))),\  II(\partial_{x},\partial_{y})=\pi_2(f_{xy}(\phi(q))),\
      II(\partial_{y},\partial_{y})=\pi_2(f_{yy}(\phi(q))).
\end{array}
$$

The coefficients of $II_{\nu}$ with respect to the basis $\{\partial_{x},\partial_{y}\}$ of $T_{q}\tilde{M}$ are
$$
\begin{array}{cc}
     l_{\nu}(q)=\langle \pi_2(f_{xx}),\nu\rangle(\phi(q)),\ m_{\nu}(q)=\langle \pi_2(f_{xy}),\nu\rangle(\phi(q)),  \\
     n_{\nu}(q)=\langle \pi_2(f_{yy}),\nu\rangle(\phi(q)).
\end{array}
$$
Thus, if $u=\alpha\partial_{x}+\beta\partial_{y}\in T_{q}\tilde{M}$ and
fixing an orthonormal frame $\left\{\nu_{1},\ldots,\nu_{k+1}\right\}$ of $N_{p}M^{2}_{\sing}$, the second fundamental form is given by
$$
\begin{array}{c}\label{eq.2ff}
II(u,u) =\displaystyle\sum_{i=1}^{k+1}II_{\nu_{i}}(u,u)\nu_{i}=\sum_{i=1}^{k+1}(\alpha^{2}l_{\nu_{i}}(q)+2\alpha\beta m_{\nu_{i}}(q)+\beta^{2}n_{\nu_{i}}(q))\nu_{i}. \\
\end{array}
$$

It is possible to take a coordinate system $\phi$ and make rotations in the target in order to obtain a local parametrisation in the Monge form
as in (\ref{Monge-final}).

Taking an orthonormal frame $\{\nu_{1},\ldots,\nu_{k+1}\}$ of $N_{p}M^2_{\sing}$, the curvature locus $\Delta_{p}$ can be parametrised by
$$
\eta(y)=\sum_{i=1}^{k+1}(l_{\nu_{i}}+2m_{\nu_{i}}y+n_{\nu_{i}}y^{2})\nu_{i},
$$
where 
each parameter $y\in\mathbb{R}$ corresponds to a unit tangent direction $u=\pm\partial_{x}+y\partial_{y}=(\pm1,y)\in C_{q}$. We denote by $y_{\infty}$ the parameter corresponding to the tangent direction given by $u=\partial_{y}=(0,1)$.

In the case of $k=1$, the curvature locus $\Delta_p$ is a planar parabola that may degenerate into a line, a half-line or a point. 

When $\Delta_p$ degenerates to a line, a half-line or a point, a special adapted frame $\{\nu_2, \nu_3\}$ of $N_pM^{2}_{\sing}$ was defined in \cite{MartinsBallesteros}, and this definition was extended for the case when $\Delta_p$ is a non-degenerate parabola in \cite{OsetSaji}, where $\nu_2$ was called the {\it axial vector}, $v_a$. With this frame and $u\in C_q$, $II(u, u) = II_{v_a} (u, u)\nu_2 +II_{\nu_3} (u, u)\nu_3$. When $\Delta_p$ degenerates to a line, a half-line or a point $II_{\nu_3} (u, u)$ does not depend on $u$ up to sign and the {\it umbilic curvature} of $M^{2}_{\sing}$ at $p$ is defined in \cite{MartinsBallesteros} by
$\kappa_u =|\langle II(u,u),\nu_3\rangle|=|II_{\nu_3}(u,u)|.$ On the other hand the {\it axial curvature} is defined in \cite{OsetSaji} as $\kappa_a(p) = min\{K_{v_a}(u) : u\in C_q\} = min\{\langle \eta(y), v_a\rangle : y\in\mathbb R\}$ where $K_{v_a} (u) =\langle II(u, u), v_a\rangle= II_{v_a}(u, u)$ is the {\it axial normal curvature function}.

For $k=2$, $\Delta_p$ is again a planar parabola that may degenerate into a line, a half-line or a point, however, this parabola now lies on a plane in $\mathbb R^3$. In \cite{Benedini/Sinha/Ruas}, the umbilic curvature was defined even when $\Delta_p$ is a non-degenerate parabola as the height of this plane.

 \subsection{Singular $3$-manifold in $\mathbb R^{k+3}$}
Let $M^3_{\sing}\subset \mathbb R^{k+3}$ be a 3-manifold with a singularity of corank 1 at $p\in M^3_{\sing}$.
Let $\mathcal B=\{\partial_x,\partial_y,\partial_z\}$ be a basis for $T_{q}\tilde{M}$ and $u=\alpha\partial_{x}+\beta\partial_{y}+\gamma\partial_{z}\in T_{q}\tilde{M}$. The coefficients and images of the first and second fundamental forms are defined analogously to the case of surfaces. 
In particular, for each normal vector $\nu\in N_pM^3_{\sing}$, the coefficients of $II_{\nu}$ in terms of local coordinates $(x,y,z)$ are:
$$
\begin{array}{c}
     l_{\nu}(q)=\langle \pi_2(f_{xx}),\nu\rangle(\phi(q)),\, m_{\nu}(q)=\langle \pi_2(f_{xy}),\nu\rangle(\phi(q)),\, 
     n_{\nu}(q)=\langle \pi_2(f_{yy}),\nu\rangle(\phi(q)),\\
     p_{\nu}(q)=\langle \pi_2(f_{zz}),\nu\rangle(\phi(q)),\, q_{\nu}(q)=\langle \pi_2(f_{xz}),\nu\rangle(\phi(q)),\, 
     r_{\nu}(q)=\langle \pi_2(f_{yz}),\nu\rangle(\phi(q)),\\
\end{array}
$$
Fixing an orthonormal frame $\{\nu_1,\ldots,\nu_{k+1}\}$ of $N_{p}M^{3}_{\sing}$, the second fundamental form can be written as
$$
\begin{array}{c}\label{eq.2ff-3man}
II(u,u) =\displaystyle\sum_{i=1}^{k+1}II_{\nu_{i}}(u,u)\nu_{i}.
\end{array}
$$

Taking a coordinate system $\phi$ and making rotations in the target, it is possible to obtain a local parametrisation in the Monge form
as in (\ref{Monge-final}).
Hence, the subset of unit tangent vectors $C_q$ is the cylinder given by $\{(\alpha,\beta,\gamma)\in T_q\tilde{M} \;:\; \alpha^2+\beta^2=1\}$ parallel to the $z$-axis. Taking an orthonormal frame $\{\nu_{1},\ldots,\nu_{k+1}\}$ of $N_{p}M^3_{\sing}$, the curvature locus $\Delta_{p}$ can be parametrised by $(\alpha,\beta,\gamma)\mapsto$
$$
\sum_{i=1}^{k+1}
(\alpha^{2}l_{\nu_i}(q)+2\alpha\beta m_{\nu_i}(q)+\beta^{2}n_{\nu_i}(q)+\gamma^{2}p_{\nu_i}(q)+2\alpha\gamma q_{\nu_i}(q)+2\beta\gamma r_{\nu_i}(q)
)\nu_{i}
$$
with $\alpha^2+\beta^2=1$.

\vspace{0.7cm}

Throughout the paper, $\mathscr A=Diff(\mathbb R^n,0)\times Diff(\mathbb R^{n+k},0)$, i.e. changes of coordinates in source and target, and $\mathscr A^2$ represents the 2-jets of elements in $\mathscr A$. All our results are local in the sense that we are considering germs of manifolds at a certain point.

\section{Definition of axial curvatures for $M^{n}_{\sing}$ in $\mathbb R^{n+k}$}\label{defs}

For the case of corank 1 surfaces $M^{2}_{\sing}\subset\mathbb R^3$ the parametrisation of $f$ can be given at the origin in Monge form
\begin{equation}\label{Monge2}
j^2f(0)=(x, \frac{1}{2}(a^1_{20}x^2 + 2a^1_{11}xy + a^1_{02}y^2), \frac{1}{2}(a^2_{20}x^2 + 2a^2_{11}xy + a^2_{02}y^2)).
\end{equation}
In \cite{OsetSaji}, when $\Delta_p$ is a non-degenerate parabola or a half-line the axial vector is defined using the direction perpendicular to the directrix of the parabola or the direction of the half-line and is given by 
$v_a=\frac{1}{\sqrt{(a^1_{02})^2+(a^2_{02})^2}} (a^1_{02}, a^2_{02})$.
The axial curvature is given by 
$$
\kappa_a(p)=\frac{1}{||(a^1_{02},a^2_{02})||}\left( (a^1_{02}a^1_{20}+a^2_{02}a^2_{20})-\frac{(a^1_{02}a^1_{11}+a^2_{02}a^2_{11})^2}{||(a^1_{02},a^2_{02})||^2}\right).
$$
For the case when the curvature parabola is a line or half-line, in \cite{MartinsBallesteros} the authors define the image by $\eta$ of the direction $y_{\infty}\in T_q\tilde M$ as the direction in which the curvature locus is not bounded. This was generalized for all $\mathscr A^2$-orbits in \cite{OsetSaji}. The following result which relates this direction with the axial vector, despite natural and not surprising, had been unnoticed so far. 

For $M^{n}_{\sing}\subset\mathbb R^{n+k}$ considering the pseudo-metric in $T_q\tilde{M}$, we call \emph{null tangent direction} the unitary tangent direction $u_{\infty}\in T_q\tilde{M}$ such that $I(u_{\infty},u_{\infty})=0$ (it corresponds to $y_{\infty}$ in the notation of \cite{MartinsBallesteros} for $M^{2}_{\sing}$ in $\mathbb R^{3}$).

\begin{prop} Let $M^{2}_{\sing}\subset\mathbb R^3$ be such that $\Delta_p$ is a non-degenerate parabola or a half-line. Let $u_{\infty}\in T_q\tilde{M}$ be the null tangent direction, then $\frac{II(u_{\infty},u_{\infty})}{|| II(u_{\infty},u_{\infty})||}=v_a$.
\end{prop}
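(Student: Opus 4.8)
The plan is to compute both sides directly in the Monge form (\ref{Monge2}) and observe that they agree. First I would identify the null tangent direction $u_\infty$. In Monge form the first fundamental form at $p=0$ has coefficients $E=\langle f_x,f_x\rangle=1$, $F=\langle f_x,f_y\rangle=0$, $G=\langle f_y,f_y\rangle=0$, since the quadratic terms in the parametrisation contribute nothing to the first derivatives at the origin. Thus for $u=\alpha\partial_x+\beta\partial_y$ we have $I(u,u)=\alpha^2$, so the null tangent direction (up to scale and sign) is $u_\infty=\partial_y=(0,1)$, matching the identification of $y_\infty$ noted in the excerpt.

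Next I would evaluate $II(u_\infty,u_\infty)=II(\partial_y,\partial_y)$. From the definition of the second fundamental form, this equals $\pi_2(f_{yy}(0))$, and reading off the Monge coefficients gives $f_{yy}(0)=(0,a^1_{02},a^2_{02})$. Since the tangent space $T_pM^2_{\sing}$ is spanned by $f_x(0)=(1,0,0)$, the projection $\pi_2$ onto the normal space simply kills the first coordinate, so $II(u_\infty,u_\infty)=(a^1_{02},a^2_{02})$ in the ambient coordinates (identifying the normal plane with the last two coordinates). Normalising yields $\frac{II(u_\infty,u_\infty)}{\|II(u_\infty,u_\infty)\|}=\frac{1}{\sqrt{(a^1_{02})^2+(a^2_{02})^2}}(a^1_{02},a^2_{02})$, which is exactly the formula for $v_a$ recalled just before the statement.

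The comparison is therefore immediate once both quantities are written in coordinates, so the proof is essentially a verification. I would only need to add a short remark addressing the hypothesis: the claim requires $\Delta_p$ to be a non-degenerate parabola or a half-line, which is precisely the condition guaranteeing that $v_a$ is well defined, i.e.\ that $(a^1_{02},a^2_{02})\neq(0,0)$, so that the normalisation makes sense and $II(u_\infty,u_\infty)\neq 0$. In the degenerate cases this vector could vanish and the statement would be vacuous.

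The main obstacle, if any, is conceptual rather than computational: one must be careful that the formula for $v_a$ quoted from \cite{OsetSaji} uses the \emph{same} orthonormal normal frame implicit in the Monge form, so that the coordinate identification of the normal plane with the last two ambient coordinates is legitimate. Since the Monge form is obtained precisely by rotating the target so that the tangent line is the $x_1$-axis and the normal plane is spanned by the remaining coordinate directions, this is automatic, and no genuine difficulty arises. I would close by noting that the sign ambiguity in $u_\infty$ (replacing $u_\infty$ by $-u_\infty$) does not affect $II(u_\infty,u_\infty)$, since the second fundamental form is quadratic in $u$, so the normalised image is genuinely well defined.
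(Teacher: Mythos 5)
Your proposal is correct and follows essentially the same route as the paper: compute $E=1$, $F=G=0$ in the Monge form, identify $u_{\infty}=(0,1)$ as the null tangent direction, and read off $II(u_{\infty},u_{\infty})=(a^1_{02},a^2_{02})$, which after normalisation is exactly the quoted formula for $v_a$. The paper's own proof is just a terser version of this verification, so your additional remarks on the nondegeneracy hypothesis and the sign of $u_{\infty}$ are fine but not needed.
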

\begin{proof}
Consider $M^{2}_{\sing}$ given by the image of $f$ in Monge form with $j^2f$ as in (\ref{Monge2}), then $E(q)=1$, $F(q)=G(q)=0$, and $u_{\infty}=(0,1)$ corresponds to the null tangent direction. Then $II(u_{\infty},u_{\infty})=(a_{02}^1,a_{02}^2)$.
\end{proof}


This result suggests how the axial vector can be defined for higher dimensions. However the idea of projecting the curvature locus to a certain direction in order to obtain meaningful curvatures should not be restricted to the axial vector. We shall define several axial vectors and, consequently, several axial curvatures.


\begin{definition}
Consider $M^{n}_{\sing}\subset\mathbb R^{n+k}$, then $\dim T_q\tilde{M}=n$ and $\dim N_pM^n_{\sing}=k+1$. Let $l=min\{n,k+1\}$. Define the \emph{axial space} $Ax_p\subset N_pM^n_{\sing}$ as $Aff_p$ if $dim Aff_p=l$ and as any $l$-vector space containing $Aff_p$ if $dim Aff_p< l$. In the second case $Ax_p$ contains $Aff_p$ and $p$ and in both cases $\dim Ax_p=l$ (as an affine or vector space respectively).
\end{definition}

Our goal is to define an adapted frame as in \cite{OsetSaji} for $Ax_p$. We start with a partial definition.

\begin{definition}\label{primaxialvect}
Let $u_{\infty}\in T_q\tilde{M}$ be the null tangent vector, and suppose $II(u_{\infty},u_{\infty})\neq 0$. Then the {\it primary axial vector} is
$$
v_a^1:=\displaystyle \frac{II(u_{\infty},u_{\infty})}{|| II(u_{\infty},u_{\infty})||}.
$$
In order to obtain an adapted frame for $Ax_p$ we add normal vectors in such a way that $\{v_a^1,\ldots,v_a^l\}$ is a positively oriented orthonormal frame.
\end{definition}

How to complete the basis and how to define an adapted frame when $II(u_{\infty},u_{\infty})=0$ will be defined separately for the cases $n=2,3$ in the next sections.

Although Definition \ref{primaxialvect} needs to be completed, we are already in position for our main definition.


\begin{definition}
Given an adapted frame $\{v_a^1,\ldots,v_a^l\}$ of $Ax_p\subset N_pM^n_{\sing}$, the \emph{$i$-ary normal curvature function} is given by 
$$
K_{v_a^i} (w) =\langle II(w, w), v_a^i\rangle= II_{v_a^i}(w, w),
$$
and the \emph{$i$-ary axial curvatures} are the numbers
$$
\kappa_{a_i}(p) =\text{critical values of   } K_{v_a^i}(w) \text{   where   } w\in C_q.
$$
\end{definition}
Taking a Monge form as in (\ref{Monge-final}) for a corank 1 singular manifold $M^{n}_{\sing}\subset\mathbb R^{n+k}$, the subset of unit tangent vectors is the cylinder given by $C_q=\{(x_1,\ldots,x_n)\in T_q\tilde{M}: x_1^2+\ldots+x_{n-1}^2=1\}\subset\mathbb R^n$ parallel to the $x_n$-axis. Under these conditions, we can prove the following.

\begin{prop}\label{numaxial}
There are at most $l(n-1)$ axial curvatures.
\end{prop}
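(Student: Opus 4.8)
The plan is to fix a single axial vector $v_a^i$ and bound the number of distinct critical values of the one function $K_{v_a^i}$ on $C_q$ by $n-1$; since the adapted frame has $l$ vectors $v_a^1,\ldots,v_a^l$, summing over $i$ then yields the bound $l(n-1)$. Working in the Monge form (\ref{Monge-final}), the function $K_{v_a^i}(w)=II_{v_a^i}(w,w)$ is a quadratic form in $w=(x_1,\ldots,x_n)$, so I would write it as $K_{v_a^i}(w)=w^{T}A^{i}w$ for the symmetric $n\times n$ matrix $A^i$ whose entries are the coefficients of $II_{v_a^i}$ (the second derivatives of the $f_\ell$ paired with $v_a^i$). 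The cylinder $C_q$ is cut out by $x_1^2+\cdots+x_{n-1}^2=1$, i.e. $w^{T}Pw=1$ with $P=\mathrm{diag}(1,\ldots,1,0)$, so the coordinate $x_n$ along the cylinder axis is free.

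First I would split $A^i$ into blocks $A^{i}=\begin{pmatrix}B & c\\ c^{T} & d\end{pmatrix}$ relative to $\mathbb{R}^{n}=\mathbb{R}^{n-1}\oplus\mathbb{R}$ (the last factor being the axis of $C_q$), writing $\bar{x}=(x_1,\ldots,x_{n-1})\in S^{n-2}$, and optimise over the unconstrained variable $x_n$ first. The stationarity condition $\partial_{x_n}K_{v_a^i}=0$ reads $c^{T}\bar{x}+d\,x_n=0$. When $d\neq 0$ this gives $x_n=-c^{T}\bar{x}/d$; substituting back collapses $K_{v_a^i}$ on $C_q$ to the Rayleigh quotient $\bar{x}^{T}\tilde{B}\bar{x}$ of the Schur complement $\tilde{B}=B-\tfrac{1}{d}cc^{T}$, a symmetric $(n-1)\times(n-1)$ matrix. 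A Lagrange-multiplier computation on the sphere $|\bar{x}|=1$ then shows that every critical value of $K_{v_a^i}$ equals the corresponding multiplier, which is an eigenvalue of $\tilde{B}$; hence the critical values lie among the at most $n-1$ distinct eigenvalues of $\tilde{B}$.

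The hard part will be the degenerate case $d=0$, where the Schur complement is undefined. Here $\partial_{x_n}K_{v_a^i}=0$ forces $c^{T}\bar{x}=0$, so every critical point lies on the slice $S^{n-2}\cap\{c\}^{\perp}$; on this slice $K_{v_a^i}$ reduces to $\bar{x}^{T}B\bar{x}$, and the same Lagrange argument identifies its critical values with the eigenvalues of the compression $\Pi B\Pi$ of $B$ to $\{c\}^{\perp}$, where $\Pi$ is the orthogonal projection onto $\{c\}^{\perp}$. This is a symmetric operator on a space of dimension at most $n-2$, so it contributes at most $n-2\le n-1$ critical values; and if in addition $c=0$ the function is independent of $x_n$ and its critical values are simply the eigenvalues of the $(n-1)\times(n-1)$ matrix $B$. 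Thus in every case $K_{v_a^i}$ has at most $n-1$ distinct critical values.

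Finally I would assemble the count: letting $i$ range over the $l$ axial vectors, the total number of $i$-ary axial curvatures is at most $l(n-1)$. The conceptual point throughout is that optimisation along the degenerate axis reduces a quadratic form constrained to $C_q$ to the eigenvalue problem for a symmetric matrix of size at most $n-1$; this size $n-1$ is exactly $\rank P$, so the degeneracy of the first fundamental form in the null direction $u_\infty$ is precisely what caps the number of axial curvatures.
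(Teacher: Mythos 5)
Your proof is correct, but it follows a genuinely different route from the paper's. The paper sets up the same Lagrange/minor condition (all $2\times 2$ minors of the matrix formed by $\nabla K_{v_a^i}$ and the gradient of the cylinder equation must vanish), but then bounds the number of solutions by invoking a multiplicity formula for the ideal of $2\times 2$ minors (Lemma 5.5 of Bivi\`a-Ausina--Nu\~no-Ballesteros): the multiplicity is $n$, so generically the critical set is at most $n$ lines through the origin, one of which is the $x_n$-axis and misses $C_q$, leaving at most $n-1$ antipodal pairs and hence at most $n-1$ critical values per axial vector. Your argument instead eliminates the free coordinate $x_n$ by hand and reduces the constrained problem to the eigenvalue problem for the Schur complement $\tilde B=B-\tfrac1d cc^{T}$ (or, in the degenerate cases $d=0$, for the compression $\Pi B\Pi$ to $\{c\}^{\perp}$, respectively for $B$ itself when $c=0$), so every critical value is an eigenvalue of a symmetric matrix of size at most $n-1$. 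What your approach buys is that it is elementary and unconditional: it covers the non-generic strata explicitly (the paper's count via multiplicity is only asserted ``generically''), and it exhibits the axial curvatures as eigenvalues of an explicit symmetric operator, which dovetails with the paper's later interpretation of axial curvatures as $v_a^i$-principal curvatures of the associated regular $(n-1)$-manifold $M^{n-1}_{\reg}$ (and correctly predicts, e.g., that no primary axial curvature exists for $n=2$ when $d=0$ and $c\neq0$, i.e.\ when $\Delta_p$ is a line). What the paper's approach buys is the structural picture of the critical locus as a homogeneous determinantal variety, with the multiplicity bound coming from general intersection-theoretic machinery. One small point to make explicit in your write-up: at a critical point of $K_{v_a^i}|_{C_q}$ the condition $\partial_{x_n}K_{v_a^i}=0$ holds because the constraint does not involve $x_n$, so the multiplier does not enter that equation; this is what legitimises optimising over $x_n$ first.
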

\begin{proof}
Fix a certain axial vector $v_a^i$. To study the critical points of $K_{v_a^i} (w)$ when $w\in C_q$ we want all the $2\times 2$ minors of the following matrix to be 0:
\begin{equation*}
\left(
\begin{array}{cccc}
\frac{\partial K_{v_a^i}}{\partial x_1} & \ldots & \frac{\partial K_{v_a^i}}{\partial x_{n-1}} & \frac{\partial K_{v_a^i}}{\partial x_n} \\
2x_1 & \ldots & 2x_n & 0 \\
\end{array}
\right),
\end{equation*}
where in the second row we have the gradient of the equation for $C_q$. This is a $2\times n$ matrix with linear entries in $n$-variables. The solutions to the system given by the minors is a homogeneous algebraic variety which is generically a collection of lines through the origin. The intersection of this lines with the cylinder $C_q$ give the critical points of $K_{v_a^i} (w)$. Since the second fundamental form is quadratic homogeneous, antipodal points in the cylinder have the same image, so there are at most as many critical values as lines in the solution to the system.

On the other hand in Lemma 5.5 of \cite{bivianuno}, there is a formula for the multiplicity of the ideal generated by the $2\times 2$ minors. Applying this formula to our situation we obtain that the multiplicity is $n$, i.e. generically there can be up to $n$ lines through the origin of multiplicity 1 as a solution to our system. However, notice that $x_1=\ldots=x_{n-1}=0$ is always a solution, but the line $(0,\ldots,0,x_n)$ does not intersect the cylinder. Therefore, there are at most $n-1$ critical points and so at most $n-1$ critical values.

Since we have $l$ axial vectors, the result follows.
\end{proof}

\begin{rem}\label{casosupen3}
\begin{enumerate}
    \item[i)] When there is more than 1 axial curvature for each $i$ we will denote them by $\kappa_{a_i}^j$, $1\leq j\leq n-1$. On the other hand, it is possible that no axial curvature exists for a certain $i$. Notice that in this case, saying that an axial curvature exists is equivalent to saying it is finite.

\item [ii)] In the particular case of $M^{2}_{\sing}\subset\mathbb R^{3}$, the primary axial curvature coincides with the axial curvature $\kappa_a$ defined in \cite{OsetSaji} and the absolute value of the secondary axial curvature coincides with the umbilic curvature $\kappa_u$ defined in \cite{MartinsBallesteros} (see Section \ref{prelim}).
\end{enumerate}
\end{rem}

For a corank 1 singular $n$-manifold parametrised in Monge form as above, the null tangent vector is given by $u_{\infty}=(0,\ldots,0,1)$. Consider the immersed $(n-1)$-manifold in $\mathbb R^{n+k}$ given by $f(x_1,\ldots,x_{n-1},0)$. Then $T_qM^{n-1}_{\reg}=u_{\infty}^{\perp}=T_q\tilde{M}\cap\{x_n=0\}$, and the pseudo-metric induces a metric here. Let $A_{\nu}:T_qM^{n-1}_{\reg}\to T_qM^{n-1}_{\reg}$ be the associated shape operator along the normal vector field $\nu$ such that $\langle A_{\nu}(w),w\rangle=\langle II(w,w), \nu\rangle$, where $w\in u_{\infty}^{\perp}$. There exists an orthonormal basis $\{e_1,\ldots,e_{n-1}\}$ of $T_qM^{n-1}_{\reg}$ of eigenvectors of $A_{\nu}$ and the corresponding eigenvalues $\kappa_1^{\nu},\ldots,\kappa_{n-1}^{\nu}$ are the $\nu$-principal curvatures.

We will show in Section \ref{M3} that, for $n=3$ at least, when $n-1$ $i$-ary axial curvatures are finite they coincide with the eigenvalues of $A_{v_a^i}$, i.e. the $v_a^i$-principal curvatures of regular $(n-1)$-dimensional manifold $M^{n-1}_{\reg}$.

\section{Axial curvatures for $M^{2}_{\sing}$ in $\mathbb R^{2+k}$}\label{sups}

We start by defining the adapted frame for $Ax_p$. Here $\dim Ax_p=l=2$ so it is enough to define the primary axial vector in order to obtain an adapted frame. For surfaces, as in \cite{MartinsBallesteros} and \cite{Benedini/Sinha/Ruas}, the curvature locus can be a non-degenerate parabola, a half-line, a line or a point. Given the null tangent vector $u_{\infty}\in T_q\tilde{M}$, when $II(u_{\infty},u_{\infty})\neq 0$, $v_a^1$ is defined as in Definition \ref{primaxialvect} and we can complete the basis in a unique way such that $\{v_a^1,v_a^2\}$ is a positively oriented orthonormal frame of $Ax_p$. This includes the cases where $\Delta_p$ is a non-degenerate parabola or a half-line.

When $II(u_{\infty},u_{\infty})=0$, define $v_a^1$ as the direction of $\Delta_p$ when it is a line and if $\Delta_p$ is a point $y\neq p$, then $v_a^1$ is such that $v_a^2=y/|y|$. If $y=p$, then any orthonormal frame is an adapted frame. 

Given the nature of the curvature locus for singular surfaces (a non-degenerate parabola, a half-line, a line or a point), there will only be one axial curvature for each $i=1,2$. When $\Delta_p$ is a line $\kappa_{a_1}$ is not defined (there is no critical point of the primary axial normal curvature function). When $\Delta_p$ is a point $\kappa_{a_1}=0$. When $\Delta_p$ is a non-degenerate parabola, $\kappa_{a_2}$ is not defined (there is no critical point of the secondary axial normal curvature function). When $\Delta_p$ is a point $\kappa_{a_2}=||p||$. In general we can write 
$$
\kappa_{a_1}(p) = min\{K_{v_a^1}(w) : w\in C_q\},
$$
$$
\kappa_{a_2}(p) = K_{v_a^2}(w) \text{   for any   } w\in C_q.
$$  
See Figure \ref{fig1} for the case when $\Delta_p$ is a half-line.

\begin{figure}\label{fig1}
\centering
\includegraphics[scale=0.75]{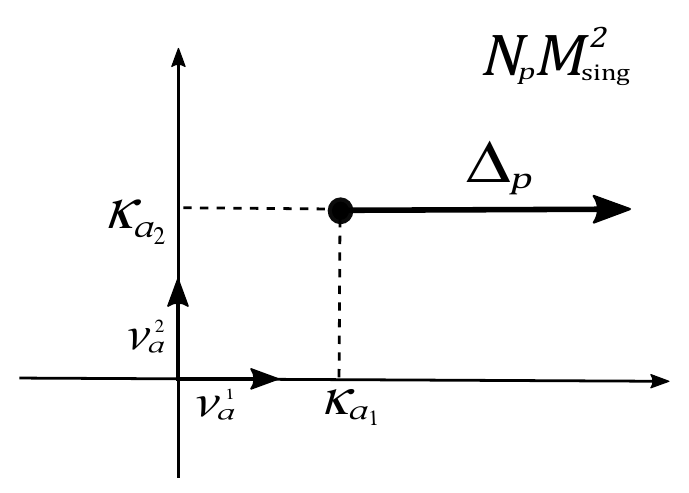}
\caption{Axial curvatures for $M_{\sing}^2\subset\mathbb R^3$ when $\Delta_p$ is a half-line.}
\end{figure}

\begin{prop}\label{axialcurvatureR4}
Let $p$ be the origin in $\mathbb R^{2+k}$ and $M^{2}_{\sing}$ be given by the image of $f$ in Monge form such that
\begin{equation}\label{MongeComplete}
j^2f(0)=(x, j^2f_1(0),\ldots,j^2f_{k+1}(0)),
\end{equation}
where $j^2f_\ell(0)=\frac{1}{2}(a^\ell_{20}x^2 + 2a^\ell_{11}xy + a^\ell_{02}y^2)$ for $\ell=1,\ldots,k+1$.
Denote ${\bf a_{ij}}=(a^1_{ij},\ldots, a^{k+1}_{ij})$, 
then $II(u_{\infty},u_{\infty})={\bf a_{02}}$, where $u_{\infty}$ is the null tangent direction. 

\begin{itemize}
\item[(a)] If ${\bf a_{02}}\neq 0$ (i.e. $\Delta_p$ is a non-degenerate parabola or a half-line) then
$$
\kappa_{a_1}(p)=\frac{1}{|| {\bf a_{02}}||}\left(\langle {\bf a_{02}},{\bf a_{20}}\rangle-\frac{\langle {\bf a_{02}},{\bf a_{11}}\rangle^2}{|| {\bf a_{02}}||^2}\right).
$$
Furthermore, if ${\bf a_{02}}\times {\bf a_{11}}=0$ (i.e. $\Delta_p$ is a half-line), then
$$\kappa_{a_2}(p)=\frac{||{\bf a_{02}}\times {\bf a_{20}}||}{||{\bf a_{02}}||}.$$

\item[(b)] If ${\bf a_{02}}= 0$ and ${\bf a_{11}}\neq 0$ (i.e. $\Delta_p$ is a line), then 
$$
\kappa_{a_2}(p)=
\frac{||{\bf a_{20}}\times {\bf a_{11}}||}{||{\bf a_{11}}||}.
$$
\end{itemize}
\end{prop}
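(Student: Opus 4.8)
The plan is to work directly from the Monge form (\ref{MongeComplete}) and reduce everything to elementary computations with the quadratic parametrisation of $\Delta_p$. First I would compute the fundamental forms at $q$: differentiating (\ref{MongeComplete}) gives $f_x(0)=(1,0,\ldots,0)$ and $f_y(0)=0$, so $E(q)=1$, $F(q)=G(q)=0$ and the induced pseudo-metric is $I(u,u)=a^2$ for $u=a\partial_x+b\partial_y$; hence the null tangent direction is $u_\infty=(0,1)$. Since $T_pM^2_{\sing}$ is spanned by $(1,0,\ldots,0)$ and each of $f_{xx}(0),f_{xy}(0),f_{yy}(0)$ has vanishing first coordinate, the projection $\pi_2$ leaves them unchanged, so in the frame $\{\nu_1,\ldots,\nu_{k+1}\}$ one gets $II(\partial_x,\partial_x)={\bf a_{20}}$, $II(\partial_x,\partial_y)={\bf a_{11}}$ and $II(\partial_y,\partial_y)={\bf a_{02}}$. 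In particular $II(u_\infty,u_\infty)={\bf a_{02}}$, which is the first assertion, and the curvature locus is parametrised by $\eta(y)={\bf a_{20}}+2y\,{\bf a_{11}}+y^2\,{\bf a_{02}}$.

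For part (a), when ${\bf a_{02}}\neq 0$ the primary axial vector is $v_a^1={\bf a_{02}}/\|{\bf a_{02}}\|$ by Definition \ref{primaxialvect}. I would substitute into $K_{v_a^1}(\eta(y))=\langle\eta(y),v_a^1\rangle$, obtaining the scalar quadratic $\tfrac{1}{\|{\bf a_{02}}\|}\bigl(\langle{\bf a_{20}},{\bf a_{02}}\rangle+2y\langle{\bf a_{11}},{\bf a_{02}}\rangle+y^2\|{\bf a_{02}}\|^2\bigr)$ in $y$. As it has positive leading coefficient, its unique critical value is the minimum, attained at $y^\ast=-\langle{\bf a_{11}},{\bf a_{02}}\rangle/\|{\bf a_{02}}\|^2$; evaluating with the standard vertex formula yields exactly the stated expression for $\kappa_{a_1}(p)$.

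For the half-line subcase (${\bf a_{02}}\times{\bf a_{11}}=0$, i.e. ${\bf a_{11}}$ parallel to ${\bf a_{02}}$) and for the line case (b) (${\bf a_{02}}=0$, ${\bf a_{11}}\neq 0$), the key observation is that $\eta(y)$ moves along a single direction, namely $v_a^1$, so the secondary axial normal curvature function is \emph{constant}. Concretely, writing ${\bf a_{11}}=\lambda{\bf a_{02}}$ in the half-line case gives $\eta(y)={\bf a_{20}}+(2\lambda y+y^2){\bf a_{02}}$, and since $v_a^2\perp v_a^1={\bf a_{02}}/\|{\bf a_{02}}\|$ lies in $Ax_p=\mathrm{span}\{{\bf a_{02}},{\bf a_{20}}\}$, one has $K_{v_a^2}(\eta(y))=\langle{\bf a_{20}},v_a^2\rangle$ independently of $y$; in case (b) $\eta(y)={\bf a_{20}}+2y\,{\bf a_{11}}$ with $v_a^1={\bf a_{11}}/\|{\bf a_{11}}\|$ and again $K_{v_a^2}(\eta(y))=\langle{\bf a_{20}},v_a^2\rangle$. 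Thus $\kappa_{a_2}(p)$ equals, in each instance, the length of the component of ${\bf a_{20}}$ orthogonal to the relevant axial direction (${\bf a_{02}}$, respectively ${\bf a_{11}}$).

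Finally I would convert these orthogonal-component lengths into cross-product form using the Lagrange identity $\bigl\|v-\tfrac{\langle v,u\rangle}{\|u\|^2}u\bigr\|=\tfrac{1}{\|u\|}\sqrt{\|u\|^2\|v\|^2-\langle u,v\rangle^2}=\tfrac{\|u\times v\|}{\|u\|}$, reading $\|u\times v\|$ as the area of the parallelogram (so the formulas make sense for arbitrary $k$). Taking $(u,v)=({\bf a_{02}},{\bf a_{20}})$ for the half-line and $(u,v)=({\bf a_{11}},{\bf a_{20}})$ for the line produces the two displayed formulas for $\kappa_{a_2}(p)$. The analytic content is minor---just minimising one quadratic---so the main obstacle is bookkeeping: verifying that in each degenerate case $v_a^2$ genuinely lies in the plane $Ax_p$ spanned by the axial direction and ${\bf a_{20}}$ (so that its pairing with ${\bf a_{20}}$ recovers the entire orthogonal component, and is indeed constant), and checking that the positively oriented choice fixing $v_a^2$ matches the nonnegative cross-product expression up to the prevailing sign convention.
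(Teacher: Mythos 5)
Your proposal is correct. The computation of $II$ in the Monge frame, the identification $II(u_\infty,u_\infty)={\bf a_{02}}$, and the treatment of part (a) by minimising the scalar quadratic $\langle\eta(y),v_a^1\rangle$ are exactly what the paper does (your vertex $y^\ast=-\langle{\bf a_{11}},{\bf a_{02}}\rangle/\|{\bf a_{02}}\|^2$ is in fact the correct one; the paper's displayed $y_0$ is missing a square in the denominator, but its final value agrees with yours). Where you genuinely diverge is in the two $\kappa_{a_2}$ computations: the paper first normalises $f$ by changes of coordinates in the source and isometries in the target so that $v_a^1$ and $v_a^2$ become coordinate axes and the relevant coefficient $\tilde{a}^{k}_{20}$ (resp.\ $\bar{a}^{2}_{20}$) can be read off directly, whereas you stay in the original frame, observe that $\eta(y)$ moves only along $v_a^1$ so that $K_{v_a^2}\equiv\langle{\bf a_{20}},v_a^2\rangle$, and convert that constant into the cross-product expression via the Lagrange identity. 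The two routes encode the same orthogonal decomposition of ${\bf a_{20}}$, but yours avoids the normal-form machinery and makes the constancy of the secondary normal curvature function transparent, at the price of having to justify that $v_a^2$ lies in $Ax_p=\mathrm{span}\{{\bf a_{02}},{\bf a_{20}}\}$ (resp.\ $\mathrm{span}\{{\bf a_{11}},{\bf a_{20}}\}$), which you do; the residual sign ambiguity you flag is also present in the paper and is acknowledged there in the remark following the proof.
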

\begin{proof}
The proof for $\kappa_{a_1}$ follows the same idea as the proof of Proposition 4.3 in \cite{OsetSaji}. 
When $f$ is given as above, the curvature locus is parameterised by
$$
\eta(y)=(a^1_{20} + 2a^1_{11}y + a^1_{02}y^2,\ldots,a^{k+1}_{20} + 2a^{k+1}_{11}y + a^{k+1}_{02}y^2)
$$
When ${\bf a_{02}}\neq 0$, the primary axial vector is given by $v^1_a=\displaystyle\frac{{\bf a_{02}}}{||{\bf a_{02}}||}$, so $K_{v_a^1}(y)=\langle\eta(y),v_a^1\rangle$.  A direct computation shows that $\displaystyle y_0=-\frac{\langle {\bf a_{02}},{\bf a_{11}}\rangle}{|| {\bf a_{02}}||}$ is the minimal critical point of $K_{v^1_a}$ and the primary axial curvature is given by $\kappa_{a_1}(p)=K_{v^1_a}(y_0)$. 

Moreover, as ${\bf a_{02}}\neq 0$ we can suppose without loss of generality that  $a^{k+1}_{02}\neq0$. Using smooth changes of coordinates in the source and isometries in the target we can reduce $j^2f$ to the form
\begin{eqnarray}\label{special}
\begin{array}{l}
j^2f_\ell(q)=\frac12(\bar{a}^\ell_{20}x^2 + 2\bar{a}^\ell_{11}xy) \mbox{ for $\ell=1,\ldots,k$}\\
j^2f_{k+1}(q)=\frac12(\bar{a}^{k+1}_{20}x^2 + 2\bar{a}^{k+1}_{11}xy+\bar{a}^{k+1}_{02}y^2).
\end{array}
\end{eqnarray}


If ${\bf a_{02}}\times {\bf a_{11}}=0$ , we can reduce (\ref{special}), using smooth changes of coordinates in the source and isometries in the target, to the form
$j^2f_\ell(q)=0$  for $\ell=1,\ldots,k-1$, $j^2f_{k}(q)=\frac12\tilde{{a}}^{k}_{20}x^2$,  and $j^2f_{k+1}(q)=\frac12(\tilde{{a}}^{k+1}_{20}x^2 + 2\tilde{{a}}^{k+1}_{11}xy + \tilde{{a}}^{k+1}_{02}y^2)$, 
where $$\tilde{{a}}^{k}_{20}=-\frac{||{\bf a_{20}}\times {\bf a_{02}}||}{||{\bf a_{02}}||}.$$ We have $v^1_a=(0,\ldots,0,1)$, $v_a^2=(0,\ldots,0,-1,0)$ and 
$$
\eta(y)=(0,\ldots,0,\tilde{{a}}^{k}_{20},\tilde{{a}}^{k+1}_{20} + 2\tilde{{a}}^{k+1}_{11}y + \tilde{{a}}^{k+1}_{02}y^2),
$$ 
Therefore $\kappa_{a_2}=-\tilde{{a}}^{k}_{20}$.

(b) When ${\bf a_{02}}= 0$ and ${\bf a_{11}}\neq 0$, then $II(u,u)=0$. We can suppose without loss of generality that  $a^{1}_{11}\neq0$ then using smooth changes of coordinates in the source and isometries in the target, we can reduce $j^2f$ to the form
$j^2f_{1}(q)=\frac12(\bar{a}^{1}_{20}x^2+2\bar{a}^{1}_{11}xy)$, $j^2f_{2}(q)=\frac12\bar{a}^{2}_{20}x^2$, and  $j^2f_\ell(q)=0$ for $\ell=3,\ldots,k+1$, 
where $$\bar{a}^{2}_{20}=\frac{||{\bf a_{20}}\times {\bf a_{11}}||}{||{\bf a_{11}}||}.$$ Here the primary axial vector is $v^1_a=(1,0,\ldots,0)$ we obtain  $v_a^2=(0,1,0\ldots,0)$ and $k_{a_2}(p)=K_{v_a^2}(w)=\bar{a}^{2}_{20}$.

\end{proof}

\begin{rem}
\begin{itemize}
\item[i)] In the previous proof the isometries may change the orientation of the basis of $N_pM^2_{\sing}$. On the other hand, the adapted frame is constructed using the locus. So the sign of $\kappa_{a_2}$ may change if the new orientation of the basis of $N_pM^2_{\sing}$ does not coincide with the positive orientation of the adapted frame.

\item[ii)] The previous formulas confirm what was announced in Remark \ref{casosupen3}, i.e. for the particular case of $M^2_{\reg}\subset \mathbb R^3$ $\kappa_{a_1}$ and $\kappa_{a_2}$ recover the axial and umbilic curvatures in \cite{OsetSaji} and \cite{MartinsBallesteros}. In fact, the formulas for $\kappa_{a_2}$ give explicit formulas for $\kappa_u$ which were not given in \cite{MartinsBallesteros}.

\end{itemize}
\end{rem}

We now give some geometrical interpretations for the axial curvatures.

\begin{prop}
Let $f$ be the parametrisation of a corank 1 singular surface in $\mathbb R^4$ such that $j^2f\sim_{\mathscr A^2}(x,y^2,0,0)$. Then the curvature $\kappa$ of the regular curve $\gamma(t)=f(t,0)$ at the origin satisfies  $\kappa^2=(\kappa_{a_1})^2+(\kappa_{a_2})^2$.
\end{prop}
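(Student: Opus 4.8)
First I would pin down the type of the curvature locus. Writing $f$ in the Monge form of Proposition \ref{axialcurvatureR4}, the hypothesis $j^2f\sim_{\mathscr A^2}(x,y^2,0,0)$ forces the parametrised locus $\eta(y)={\bf a_{20}}+2{\bf a_{11}}y+{\bf a_{02}}y^2$ to be a half-line; equivalently ${\bf a_{02}}\neq 0$ and ${\bf a_{02}}\times{\bf a_{11}}=0$. Hence we are in case (a) of Proposition \ref{axialcurvatureR4} and both $\kappa_{a_1}$ and $\kappa_{a_2}$ are defined by the formulas given there. I would moreover fix the adapted Monge representative of this orbit in which the cross term along the axial vector is cleared, i.e.\ $\langle{\bf a_{11}},{\bf a_{02}}\rangle=0$; since ${\bf a_{11}}$ is parallel to ${\bf a_{02}}$ this forces ${\bf a_{11}}=0$, so that the non-null tangent direction $(1,0)$, which is the direction of $\gamma$, is exactly the direction realising the critical value of $K_{v_a^1}$.

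Next I would compute the curvature of $\gamma(t)=f(t,0)$ at the origin. In Monge form $\gamma'(0)=f_x(0)=(1,0,\ldots,0)$ and $\gamma''(0)=f_{xx}(0)=(0,{\bf a_{20}})$, so $\gamma'(0)$ is a unit vector orthogonal to $\gamma''(0)$. Therefore $\gamma$ is regular at the origin and
\[
\kappa=\left.\frac{\sqrt{||\gamma'||^2\,||\gamma''||^2-\langle\gamma',\gamma''\rangle^2}}{||\gamma'||^3}\right|_{0}=||\gamma''(0)||=||{\bf a_{20}}||,
\]
whence $\kappa^2=||{\bf a_{20}}||^2$.

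Then I would match the two axial curvatures to orthogonal components of ${\bf a_{20}}$. With ${\bf a_{11}}=0$ the formulas of Proposition \ref{axialcurvatureR4}(a) read $\kappa_{a_1}=\langle{\bf a_{20}},{\bf a_{02}}\rangle/||{\bf a_{02}}||=\langle{\bf a_{20}},v_a^1\rangle$ and $\kappa_{a_2}=||{\bf a_{02}}\times{\bf a_{20}}||/||{\bf a_{02}}||$. Applying the Lagrange identity $||{\bf a_{02}}\times{\bf a_{20}}||^2=||{\bf a_{02}}||^2\,||{\bf a_{20}}||^2-\langle{\bf a_{02}},{\bf a_{20}}\rangle^2$ in $\mathbb R^3$ gives $\kappa_{a_2}^2=||{\bf a_{20}}||^2-\langle{\bf a_{20}},v_a^1\rangle^2$, and adding the two squares yields $\kappa_{a_1}^2+\kappa_{a_2}^2=||{\bf a_{20}}||^2=\kappa^2$, as claimed. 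The conceptual reason, which I would record alongside the computation, is that $\gamma''(0)=(0,{\bf a_{20}})$ is purely normal and equals $\eta$ evaluated at the non-null direction, so it lies in $Aff_p\subset Ax_p=\mathrm{span}(v_a^1,v_a^2)$; the identity is then just Pythagoras for the decomposition of ${\bf a_{20}}$ in the adapted frame.

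The main obstacle is the normalisation in the first paragraph. The quantity that must vanish is $\langle{\bf a_{11}},{\bf a_{02}}\rangle$, and this is precisely what guarantees that $\kappa_{a_1}$ equals the genuine $v_a^1$-component $\langle{\bf a_{20}},v_a^1\rangle$ rather than the critical value shifted by the $xy$-term: a direct computation gives $\kappa_{a_1}=\langle{\bf a_{20}},v_a^1\rangle-\lambda^2||{\bf a_{02}}||$ when ${\bf a_{11}}=\lambda\,{\bf a_{02}}$, and a nonzero $\lambda$ would spoil the Pythagorean relation. I would therefore have to argue that the adapted Monge form attached to the orbit $(x,y^2,0,0)$, together with the choice of $\gamma$ as the curve in the critical non-null direction, forces $\langle{\bf a_{11}},{\bf a_{02}}\rangle=0$. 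Once this alignment is justified the remainder is the short algebraic identity above.
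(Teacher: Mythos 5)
Your route is essentially the paper's: put $f$ in Monge form, recognise the orbit $(x,y^2,0,0)$ as the half-line case, compute $\kappa=||\gamma''(0)||=||{\bf a_{20}}||$ from $\gamma'(0)\perp\gamma''(0)$, and read $\kappa_{a_1},\kappa_{a_2}$ off Proposition \ref{axialcurvatureR4}; your Lagrange-identity step spells out the Pythagorean decomposition that the paper leaves as a one-line "direct computation". The single step you leave open --- that the Monge representative of this orbit may be taken with ${\bf a_{11}}=0$ --- is precisely where the paper inserts its only external input: it quotes Lemma 3.7 of \cite{Benedini/Sinha/Ruas}, which writes the 2-jet as $\bigl(x,\tfrac{a^1_{20}}{2}x^2+y^2,\tfrac{a^2_{20}}{2}x^2,0\bigr)$, so that ${\bf a_{11}}=0$, ${\bf a_{02}}=(2,0,0)$ and $\eta(y)=(a^1_{20}+2y^2,a^2_{20},0)$ hold automatically. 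So your argument is complete once you either cite that normal form or carry out the reduction yourself: writing ${\bf a_{11}}=\lambda{\bf a_{02}}$, the shear $y\mapsto y-\lambda x$ in the source kills ${\bf a_{11}}$ and replaces ${\bf a_{20}}$ by ${\bf a_{20}}-\lambda^2{\bf a_{02}}$.

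Your closing caveat is well founded and worth recording: the identity is not parametrisation-independent. In a general Monge form with ${\bf a_{11}}=\lambda{\bf a_{02}}$ one gets
$\kappa_{a_1}^2+\kappa_{a_2}^2=||{\bf a_{20}}-\lambda^2{\bf a_{02}}||^2$, the squared norm of the vertex of the half-line, whereas the curve $f(t,0)$ has $\kappa^2=||{\bf a_{20}}||^2$; these agree only when $\lambda\langle{\bf a_{02}},{\bf a_{20}}-\tfrac{\lambda^2}{2}{\bf a_{02}}\rangle=0$. Hence the proposition is really a statement about the distinguished curve whose tangent direction realises the critical point of $K_{v_a^1}$ (the vertex of the half-line), and the hypothesis "$f$ in the normal form of Lemma 3.7" is what makes $f(t,0)$ that curve. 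The paper makes this choice silently; your proof identifies it as the crux but stops short of discharging it, which is the only gap between the two.
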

\begin{proof}
As $j^2f\sim_{\mathcal A}(x,y^2,0,0)$ we can take $f$ parametrised by 
$$
(x,y)\mapsto\left(x,\frac{a^1_{20}}{2}x^2 +y^2 + p(x,y),\frac{a_{20}^2}{2}x^2+q(x,y),r(x,y)\right)
$$
where $p,q,r\in \mathcal M_2^3$ (see Lemma 3.7 in \cite{Benedini/Sinha/Ruas}). Here the curvature parabola is parameterised by $\eta(y)=(a_{20}^1 +2y^2,a_{20}^2,0)$, then $\kappa_{a_2}=a_{20}^2$ and by Proposition \ref{axialcurvatureR4} we get $\kappa_{a_1} =a_{20}^1$. Furthermore, when $y=0$ the curvature of the regular curve $\gamma(t)=f(t,0)$  satisfies the desired result.  
\end{proof}

\begin{rem}
This formula generalizes the formula given in \cite[p. 455]{MartinsSaji} for the case of frontals in $\mathbb R^3$ where it was proved that $\kappa^2=\kappa_{s}^2+\kappa_{\nu}^2$, where $\kappa$ is the curvature of the cuspidal edge curve, $\kappa_{s}$ is the singular curvature and $\kappa_{\nu}$ is the limiting normal curvature.
\end{rem}

\begin{ex}\label{formulas}
Consider the singular surface parameterised by 
$$
f (x, y) = \left(x, \frac{a_{20}^1}{2}x^2 + y^2, \frac{a^2_{20}}{2}x^2,\frac{a^3_{20}}{2}x^2 +y^3\right)
$$ 
which can be seen as a surface in $\mathbb R^4$ with a cuspidal edge. The curvature parabola is a half-line parameterised by $\eta(y)=(a_{20}^1 +2y^2,a_{20}^2,a_{20}^3)$.
By Proposition \ref{axialcurvatureR4}, $\kappa_{a_1}(p)=a_{20}^1$ and $\kappa_{a_2}(p)=\sqrt{(a_{20}^2)^2+(a_{20}^3)^2}$. The regular curve $\gamma(t)=f(t,0)$ has curvature $\kappa=||{\bf a_{20}}||$, and hence $\kappa^2=(\kappa_{a_1})^2+(\kappa_{a_2})^2$.
\end{ex}

Given $M^2_{\sing}\subset\mathbb{R}^4$ a singular corank one surface at $p$, one can associate a regular surface $M^2_{\reg}\subset\mathbb{R}^4$. In \cite[p. 782]{Benedini/Sinha/Ruas} the authors provide such construction and prove that their second order geometries are strongly related to each other, since they have the same second fundamental form (see Theorem 4.14 in \cite{Benedini/Sinha/Ruas}). The singular surface can be obtained as a projection of a regular surface $N^2_{\reg}\subset\mathbb{R}^5$ in a tangent direction, via the map
$\xi:N^2_{\reg}\subset\mathbb{R}^{5}\rightarrow M^{2}_{\sing}\subset\mathbb{R}^4$. The regular surface $N^2_{\reg}\subset\mathbb{R}^{5}$ can be taken, locally, as the image of an immersion $i:\tilde{M}\rightarrow N^{2}_{\reg}\subset\mathbb{R}^{5}$, where $\tilde{M}$ is the regular surface from the construction in Section \ref{prelim} ($k=n=2$).

Hence, $M^{2}_{\reg}\subset\mathbb{R}^{4}$ is the regular surface locally obtained by projecting $N^{2}_{\reg}\subset\mathbb{R}^{5}$ via the map $\texttt{p}$ into the four space given by $T_{\xi^{-1}(p)}N^2_{\reg}\oplus \xi^{-1}(E_{p})$, where $E_p$ is a plane through $p$ parallel to $Aff_p$ (see the following diagram).

$$
\xymatrix{
 &  & N^2_{\reg}\subset\mathbb{R}^{5}\ar[rd]^-{\texttt{p}}\ar[d]^-{\xi} & \\
 \mathbb{R}^{2}\ar@/_0.7cm/[rr]^-{f}& \tilde{M}\ar[l]_-{\phi}\ar[r]^-{g}\ar[ru]^-{i}& M^2_{\sing}\subset\mathbb{R}^{4}& M^2_{\reg}\subset\mathbb{R}^{4}
}
$$

\begin{prop}
Let $M^2_{\sing}\subset\mathbb{R}^4$ a corank 1 surface at $p$ and $M^2_{\reg}\subset\mathbb{R}^4$ its associated regular surface. Suppose $M^2_{\sing}$ locally parametrised as in (\ref{MongeComplete}), with ${\bf a_{02}}\neq 0$. The Gaussian curvature of $\pi(M^2_{\reg})\subset\mathbb{R}^3$, the regular surface obtained projecting $M^2_{\reg}$ along $(v_a^1)^{\perp}$, at the respective point, is given by $K=\kappa_{a_1}(p)$.
\end{prop}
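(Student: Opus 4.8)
The plan is to produce an explicit parametrisation of $M^2_{\reg}$, realise $\pi(M^2_{\reg})$ as a graph whose unit normal at the relevant point is $v_a^1$, compute its Gaussian curvature as a Hessian determinant, and match the outcome with the formula for $\kappa_{a_1}(p)$ in Proposition \ref{axialcurvatureR4}. First I would record the two structural facts that the construction above supplies. Since $M^2_{\reg}=\texttt{p}(N^2_{\reg})$ with $\texttt{p}$ the orthogonal projection onto $T_{\xi^{-1}(p)}N^2_{\reg}\oplus\xi^{-1}(E_p)$ and $E_p$ is a plane parallel to $Ax_p=\langle v_a^1,v_a^2\rangle$, the surface $M^2_{\reg}\subset\mathbb R^4$ lies in $T_pM^2_{\reg}\oplus Ax_p$ and has normal space exactly $Ax_p$ at $p$; and by Theorem 4.14 of \cite{Benedini/Sinha/Ruas} it shares the second fundamental form of $M^2_{\sing}$, so in the coordinates of (\ref{MongeComplete}) one has $II(\partial_x,\partial_x)={\bf a_{20}}$, $II(\partial_x,\partial_y)={\bf a_{11}}$, $II(\partial_y,\partial_y)={\bf a_{02}}$, with $v_a^1={\bf a_{02}}/\|{\bf a_{02}}\|$ by Definition \ref{primaxialvect} and Proposition \ref{axialcurvatureR4}.

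Next I would analyse the projection. As $M^2_{\reg}$ has no component outside $T_pM^2_{\reg}\oplus Ax_p$, projecting it along the direction $v_a^2\in(v_a^1)^{\perp}$ lands it in the three-space $\mathbb R^3=\langle T_pM^2_{\reg},v_a^1\rangle$, where $\pi(M^2_{\reg})$ is a graph over its tangent plane with unit normal $v_a^1$ at the image of $p$. Because $v_a^2$ is orthogonal to $T_pM^2_{\reg}$, this projection preserves both the tangent plane and the first fundamental form at $p$; and because the retained normal direction is $v_a^1$, the scalar second fundamental form of $\pi(M^2_{\reg})$ along its normal is precisely $II_{v_a^1}=\langle II(\cdot,\cdot),v_a^1\rangle$.

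The curvature step is then a short computation. At the image of $p$ the graph is critical over its tangent plane, so its Gaussian curvature is $K=\det(II_{v_a^1})/\det(\mathrm I)$, the determinant of the shape operator $A_{v_a^1}$. Expanding, $\det(II_{v_a^1})=\langle{\bf a_{20}},v_a^1\rangle\langle{\bf a_{02}},v_a^1\rangle-\langle{\bf a_{11}},v_a^1\rangle^2$. Comparing with the $v_a^1$-normal curvature function $K_{v_a^1}(y)=\langle{\bf a_{20}},v_a^1\rangle+2\langle{\bf a_{11}},v_a^1\rangle y+\langle{\bf a_{02}},v_a^1\rangle y^2$, whose minimum value is $\kappa_{a_1}(p)$, completing the square gives $\det(II_{v_a^1})=\langle{\bf a_{02}},v_a^1\rangle\,\kappa_{a_1}(p)$. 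Feeding this, together with $\det(\mathrm I)$ read off from the parametrisation of $M^2_{\reg}$, into $K=\det(II_{v_a^1})/\det(\mathrm I)$ and simplifying with the explicit expression for $\kappa_{a_1}(p)$ from Proposition \ref{axialcurvatureR4} yields $K=\kappa_{a_1}(p)$.

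The step I expect to be delicate is the precise first fundamental form of $M^2_{\reg}$, because the final identification rests on $\det(\mathrm I)$ cancelling the factor $\langle{\bf a_{02}},v_a^1\rangle=\|{\bf a_{02}}\|$ that appears in $\det(II_{v_a^1})$. The inner-product bookkeeping is otherwise routine, but this normalisation must be extracted carefully from the explicit parametrisation of $M^2_{\reg}$ in \cite{Benedini/Sinha/Ruas} (equivalently, from the scaling of the null direction $u_\infty$ in the lift $N^2_{\reg}\subset\mathbb R^5$ through which $M^2_{\reg}$ is defined); an error here rescales $K$ by exactly this factor and would break the clean identity $K=\kappa_{a_1}(p)$.
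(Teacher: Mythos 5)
Your strategy is the same as the paper's -- realise $\pi(M^2_{\reg})$ as a graph over its tangent plane with normal $v_a^1$ and compute $K$ as $\det(II_{v_a^1})/\det(\mathrm I)$, matching the result against the formula of Proposition \ref{axialcurvatureR4} -- and the algebraic part is correct: indeed $\det(II_{v_a^1})=\langle {\bf a_{20}},v_a^1\rangle\|{\bf a_{02}}\|-\langle {\bf a_{11}},v_a^1\rangle^2=\|{\bf a_{02}}\|\,\kappa_{a_1}(p)$. But the step you flag as ``delicate'' is precisely where the whole content of the proposition sits, and you leave it unproved: you need $\det(\mathrm I_{M^2_{\reg}})(p)=\|{\bf a_{02}}\|$ in the coordinates in which $II$ has coefficients ${\bf a_{20}},{\bf a_{11}},{\bf a_{02}}$, and this is \emph{not} automatic. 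With the naive lift $i(x,y)=(x,y,f_1,\ldots,f_3)$, the surface $N^2_{\reg}$ (hence $M^2_{\reg}$, since the orthogonal projection $\texttt{p}$ fixes $T_qN^2_{\reg}$ pointwise) is a graph over the $(x,y)$-plane, so $\mathrm I_{M^2_{\reg}}(p)$ is the identity and your computation gives $K=\|{\bf a_{02}}\|\,\kappa_{a_1}(p)$, which is the wrong answer whenever $\|{\bf a_{02}}\|\neq 1$. A quick test case such as $f(x,y)=(x,\tfrac12(ax^2+cy^2),\tfrac12(bx^2+2hxy),0)$ shows this concretely: there $\kappa_{a_1}=a$ while the naive construction yields $K=ac$.

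The point is that $M^2_{\reg}$ is only defined up to the choice of scaling of the null direction in the lift, so the statement is only true for the normalised choice, and establishing that normalisation is the actual work. The paper does exactly this: it first puts $f$ in the form (\ref{special}), rotates so that $v_a^1$ is the last coordinate axis, and rescales the source variable $y$ so that $f_{yy}(q)$ becomes a unit vector (the rescaling written there as $(x,y)\mapsto(x,\frac{1}{\|{\bf a_{02}}\|}y)$ should be by $\frac{1}{\sqrt{\|{\bf a_{02}}\|}}$, as in the analogous step $z''=\frac{1}{\sqrt{\|{\bf a_{002}}\|}}z'$ of Lemma \ref{change}); only then is $\pi(M^2_{\reg})$ the graph of the last component and $K$ its Hessian determinant, equal to $\kappa_{a_1}(p)$. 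To complete your argument you must either carry out this normalisation explicitly, or verify from the construction in \cite{Benedini/Sinha/Ruas} that the immersion $i$ is taken with $\partial_y$ scaled so that $|f_{yy}(q)|=1$ (equivalently $G(q)=\|{\bf a_{02}}\|$ in the original coordinates); as written, the cancellation you rely on is asserted rather than proved.
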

\begin{proof}
Since ${\bf a_{02}}\neq 0$, we can change the local parametrisation of $M^2_{\sing}\subset\mathbb{R}^4$ to the form (\ref{special}), where $k=2$.
Furthermore, using a rotation, it is possible to eliminate $\bar{a}^\ell_{20}$, for $\ell=1$ or 2. Without loss of generality, suppose $\bar{a}^1_{20}=0$. Finally, we take the change of coordinates in the source given by $(x,y)\mapsto(x,\frac{1}{||{\bf a_{02}}||}y)$. Therefore, $E_p$ is the plane generated by the last two coordinate axes, $M^2_{\reg}\subset\mathbb{R}^4$ is locally given by
$(x,y,j^2f_1(q),j^2f_2(q))$ and $v_a^1=(0,0,0,1)$. The Gaussian curvature of $\pi(M^2_{\reg})\subset\mathbb{R}^3$, at the respective point, is given by
$K=\bar{a}^1_{20}\bar{a}^1_{02}-(\bar{a}^1_{11})^2=\kappa_{a_1}(p)$.
\end{proof}

Proposition 5.2 in \cite{OsetSaji} can be easily extended for $\mathbb R^n$. 
\begin{prop} If $f$ satisfies that $\Delta_p$ is a non-degenerate parabola or a half-line,
the $\mathscr A$-singularities of $h_{v_a^1}$, the height function in the direction $v_a^1$, are
\begin{enumerate}
    \item $A^+_1$ if and only if $\kappa_{a_1}(p) > 0$,
    \item $A^-_1$ if and only if $\kappa_{a_1}(p) < 0$,
    \item $A_{\geq2}$ if and only if $\kappa_{a_1}(p) =0$.
\end{enumerate}
\end{prop}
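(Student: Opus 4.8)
The plan is to reduce the classification of the $\mathscr A$-singularity of $h_{v_a^1}$ at the origin to the algebraic data of its $2$-jet, exactly as in Proposition 5.2 of \cite{OsetSaji}, and then to identify the discriminant of that $2$-jet with $\kappa_{a_1}(p)$. First I would write the height function explicitly. Taking $f$ in the Monge form (\ref{MongeComplete}) with $p$ at the origin, the tangent line $T_pM^2_{\sing}$ is spanned by $f_x(0)=(1,0,\ldots,0)$, so the normal space is $\{0\}\times\mathbb R^{k+1}$ and $v_a^1={\bf a_{02}}/||{\bf a_{02}}||$ has vanishing first entry (here ${\bf a_{02}}\neq 0$ by hypothesis, since $\Delta_p$ is a non-degenerate parabola or a half-line). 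Hence $h_{v_a^1}(x,y)=\langle f(x,y),v_a^1\rangle=\frac{1}{||{\bf a_{02}}||}\sum_{\ell=1}^{k+1}a^\ell_{02}f_\ell(x,y)$, and the tangential coordinate $x$ of $f$ contributes nothing. Because every $f_\ell$ has vanishing $1$-jet, the gradient of $h_{v_a^1}$ vanishes at the origin, so the origin is a critical point and its $\mathscr A$-type is governed by the $2$-jet.

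Next I would read off the Hessian. From (\ref{MongeComplete}) one gets
\begin{equation*}
j^2 h_{v_a^1}(0)=\frac{1}{2||{\bf a_{02}}||}\Big(\langle{\bf a_{02}},{\bf a_{20}}\rangle\,x^2+2\langle{\bf a_{02}},{\bf a_{11}}\rangle\,xy+||{\bf a_{02}}||^2\,y^2\Big),
\end{equation*}
so the Hessian matrix at the origin is
\begin{equation*}
\Hess h_{v_a^1}(0)=\frac{1}{||{\bf a_{02}}||}\begin{pmatrix}\langle{\bf a_{02}},{\bf a_{20}}\rangle & \langle{\bf a_{02}},{\bf a_{11}}\rangle\\[2pt] \langle{\bf a_{02}},{\bf a_{11}}\rangle & ||{\bf a_{02}}||^2\end{pmatrix}.
\end{equation*}
A direct computation then gives $\det \Hess h_{v_a^1}(0)=||{\bf a_{02}}||\,\kappa_{a_1}(p)$, using the formula for $\kappa_{a_1}$ from Proposition \ref{axialcurvatureR4}(a). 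Since $||{\bf a_{02}}||>0$, the sign of the Hessian determinant agrees with the sign of $\kappa_{a_1}(p)$.

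Finally I would invoke the standard classification of the $\mathscr A$-singularities of a function of two variables at a critical point. When $\det\Hess h_{v_a^1}(0)\neq 0$ the Morse lemma gives an $A_1$ singularity, and the superscript is fixed by the signature: the diagonal entry $||{\bf a_{02}}||$ being positive forces the Hessian to be positive definite when the determinant is positive (type $A_1^+$) and indefinite when it is negative (type $A_1^-$); when $\det\Hess h_{v_a^1}(0)=0$ the $2$-jet is degenerate and the singularity is $A_{\geq 2}$. Combining this with $\det\Hess h_{v_a^1}(0)=||{\bf a_{02}}||\,\kappa_{a_1}(p)$ yields the three equivalences, with case (3) holding precisely because $\kappa_{a_1}(p)=0$ is exactly the degeneracy condition for the $2$-jet. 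The only delicate point is matching the sign conventions for $A_1^{\pm}$ (definite versus indefinite) with the sign of $\kappa_{a_1}$; once the sign of the fixed positive diagonal entry is pinned down this is immediate. I expect the bookkeeping of these signs, rather than any genuine difficulty, to be the main thing to check.
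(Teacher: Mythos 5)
Your proposal is correct and follows essentially the same route as the paper: both reduce the statement to the observation that $\kappa_{a_1}(p)$ is a positive multiple of the Hessian determinant of $h_{v_a^1}$ at the critical point, and then read off the $A_1^{+}/A_1^{-}/A_{\geq 2}$ trichotomy from the sign of that determinant (the positive diagonal entry $||{\bf a_{02}}||$ guaranteeing corank $\leq 1$, hence $A_{\geq 2}$ rather than worse in the degenerate case). The only cosmetic difference is that the paper passes through the normalized coordinates of Lemmas 3.6--3.7 of \cite{OsetSaji} to get the exact equality $\kappa_{a_1}(p)=\Hess(h_{v_a^1})$, whereas you compute directly in the Monge form and obtain $\det\Hess h_{v_a^1}(0)=||{\bf a_{02}}||\,\kappa_{a_1}(p)$, which is equally sufficient.
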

\begin{proof}
The proof is analogous to the proof in \cite{OsetSaji}. Following Lemma 3.6 and Lemma 3.7 in \cite{OsetSaji}, for any smooth map $f:\mathbb R^2\to \mathbb R^{k+2}$ with $q\in\mathbb R^2$ a corank 1 singular point of $f$ there exists a coordinate system $(x,y)$ which satisfies $f_x(q)\neq 0$, $f_y(q) = 0$, $|f_x(q)| =
|f_{xx}(q)| = 1$ and $\langle f_x(q), f_{yy}(q)\rangle= 0$. In such a coordinate system $$
\kappa_{a_1}(p) =( \langle f_{xx}, f_{yy}\rangle - \langle f_{xy}, f_{yy} \rangle^2 )(q). 
$$
The right hand side of the formula does not depend on the coordinate system as long as it satisfies the above conditions, so in particular one can chose $f$ such that
 $j^2f(0)=(x, j^2f_1(0),\ldots,j^2f_{k+1}(0)),$
where 
$$j^2f_\ell(0)=\frac{1}{2}\left(a^\ell_{20}x^2 + \frac{2a^\ell_{11}}{\sqrt{|| {\bf a_{02}}||}}xy + \frac{a^\ell_{02}}{|| {\bf a_{02}}||}y^2\right)
$$ 
for $\ell=1,\ldots,k+1$ ($\bf a_{02}\neq 0$ because $\Delta_p$ is a non-degenerate parabola or a half-line). Now consider contact with the plane orthogonal to $v_a^1$. This contact is measured by the $\mathscr A$-singularity of the height function $h_{v_a^1}(f(q))=\langle f(q),v_a^1\rangle$. Direct computation shows 
$$
\kappa_{a_1}(p) =\Hess(h_{v_a^1}(f(q)))
$$
and the result follows.
\end{proof}

We will give a geometrical interpretation for $\kappa_{a_2}$ in Section \ref{umbs}.


\section{Axial curvatures for $M^{3}_{\sing}$ in $\mathbb R^{3+k}$}\label{M3}

\subsection{Curvature loci and adapted frames}

As in the previous section, we start by defining the adapted frame for $Ax_p$. When $k=1$, then $l=2$, if $k>1$, then $l=3$ and so we must distinguish these two possibilities. 

Given a smooth map $f:\mathbb R^3\to \mathbb R^{3+k}$ with $q\in\mathbb R^3$ a corank 1 singular point of $f$, there exists a coordinate system in Monge form such that  
\begin{eqnarray}\label{Monge3man}
f(x,y,z)=(x,y,f_1(x,y,z),\ldots,f_{k+1}(x,y,z))
\end{eqnarray}
where $j^2f_\ell(q)=\frac{1}{2}(a^\ell_{200}x^2 + 2a^\ell_{110}xy + a^\ell_{020}y^2 + 2a^\ell_{101}xz + 2a^\ell_{011}yz+a^\ell_{002}z^2)$ for $\ell=1,\ldots,k+1$.
Consider the notation ${\bf a_{pqr}}=(a^1_{pqr},\ldots, a^{k+1}_{pqr})$ with $p,q,r=0,1,2$ and the matrix
\begin{equation}\label{matrixA}
  A=\left[\begin{array}{ccc}{\bf a_{101}}& {\bf a_{011}} & {\bf a_{002}}\end{array}\right]. 
\end{equation}

We start with the case of corank 1 3-manifolds in $\mathbb R^4$. In order to define the adapted frame we need to understand the types of curvature locus that can appear. For this we classify first the 2-jet orbits under $\mathscr A$-equivalence. We denote by $J^2(3,3+k)$ the subspace of 2-jets $j^2f(0)$ of map germs $f:(\mathbb R^3,0)\to(\mathbb R^{3+k},0)$ and by $\Sigma^1 J^2(3,3+k)$ the subset of 2-jets of corank 1.

\begin{prop}\label{A2 orbits}
There are five $\mathscr{A}^2$-orbits in $\Sigma^1J^2(3,4)$:
$$(x,y,xz,z^2),\ (x,y,xz,yz),\ (x,y,z^2,0),\ (x,y,xz,0)\ \mbox{and}\ (x,y,0,0).$$
\end{prop}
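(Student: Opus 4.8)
The plan is to classify the $\mathscr{A}^2$-orbits in $\Sigma^1 J^2(3,4)$ by exploiting the Monge normal form established earlier. For a corank 1 germ $f:(\R^3,0)\to(\R^4,0)$ we may use the coordinate system of (\ref{Monge3man}) with $k=1$, so that the first two components are $(x,y)$ and the last two components $f_1,f_2$ are homogeneous quadratics in $(x,y,z)$ with their $x$- and $y$-partials vanishing at the origin. This means no pure $x^2,y^2,xy$ terms contribute essential information once we allow source changes of coordinates: the only $\mathscr{A}^2$-relevant part of each $f_\ell$ is the restriction involving $z$, namely the terms $2a^\ell_{101}xz + 2a^\ell_{011}yz + a^\ell_{002}z^2$, since the quadratic terms in $x,y$ alone can be absorbed by target translations composed with the identity-linear part in the first two slots. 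First I would reduce the problem to understanding the pair of quadratic forms $(f_1,f_2)$ modulo the residual group action: linear changes in $(x,y,z)$ preserving the coordinate structure (i.e.\ preserving the flag given by $z$ as the null direction) in the source, and $GL_2$ acting on the target pair $(f_1,f_2)$.

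The key algebraic object is the $2\times 2$ matrix of coefficients of $z$, which in the notation of (\ref{matrixA}) with $k=1$ is formed from $\mathbf{a_{101}}, \mathbf{a_{011}}, \mathbf{a_{002}}$ (each a scalar when $k=1$). The classification should be organized by the rank and degeneracy type of the pencil spanned by $f_1,f_2$ viewed as quadratic forms, together with whether the $z^2$ coefficient direction is independent from the $xz,yz$ directions. Concretely, I would stratify by: whether the $2\times 2$ matrix built from the $z$-linear-in-$(x,y)$ coefficients $\mathbf{a_{101}},\mathbf{a_{011}}$ has rank $2$, $1$, or $0$, and whether $\mathbf{a_{002}}$ lies in the span of the columns already used. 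The five listed normal forms then correspond to the cases: rank $2$ with an independent $z^2$ term giving $(x,y,xz,z^2)$; rank $2$ with $z^2$ dependent giving $(x,y,xz,yz)$; rank $1$ with $z^2$ present giving $(x,y,z^2,0)$; rank $1$ without independent $z^2$ giving $(x,y,xz,0)$; and the zero case $(x,y,0,0)$. For each stratum I would produce explicit source and target linear changes normalizing the coefficients to one of these five forms, then check that distinct forms are genuinely inequivalent by computing an invariant (e.g.\ the corank of the Hessian of a generic height function, or equivalently the dimension and type of the curvature locus $\Delta_p$).

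The main obstacle I expect is the bookkeeping of the residual group: unlike a free $GL_3\times GL_2$ action, the source changes must respect the Monge structure (preserving $x,y$ as the first components forces lower-triangular-type constraints once we track how $z$ may be rescaled and mixed with $x,y$), so I must verify carefully which normalizations are actually achievable. In particular, distinguishing $(x,y,xz,z^2)$ from $(x,y,xz,yz)$ requires checking whether the quadratic $z^2$ can be rotated into the span of $xz,yz$ under the allowed action, which hinges on a genuine rank/independence invariant rather than an artifact of coordinate choice. To close the argument I would confirm completeness by a dimension count: each orbit's codimension should sum correctly against $\dim \Sigma^1 J^2(3,4)$, ensuring no stratum has been missed. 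I would also cross-check each normal form against the known types of curvature locus for $3$-manifolds in $\R^4$ recorded in the cited references \cite{Benedini/Sinha/Ruas2,BenediniRuasSacramento}, since each $\mathscr{A}^2$-orbit must yield one of the admissible loci, giving an independent verification that the list of five is exhaustive and pairwise distinct.
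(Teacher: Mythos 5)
Your overall strategy --- pass to Monge form, absorb the pure $x^2,xy,y^2$ terms of the last two components by quadratic target changes, and then classify the surviving $z$-part by linear algebra on its coefficient vectors --- is exactly the paper's route. However, the concrete stratification you propose does not work. The rank of the $2\times 2$ matrix $\left[\begin{array}{cc}\mathbf{a_{101}} & \mathbf{a_{011}}\end{array}\right]$ is not an invariant of the residual group: the source shears $z\mapsto z+\lambda x+\mu y$ preserve the Monge structure (only the kernel direction $\partial_z$ of $df_0$ is intrinsic, not a complement to it) and replace $\mathbf{a_{101}},\mathbf{a_{011}}$ by $\mathbf{a_{101}}+\lambda\mathbf{a_{002}},\ \mathbf{a_{011}}+\mu\mathbf{a_{002}}$, so this rank jumps whenever $\mathbf{a_{002}}\neq 0$; for instance $(x,y,z^2,0)$ has your matrix equal to zero while the equivalent germ $(x,y,z^2+2xz,0)$ gives rank $1$. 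Moreover your assignment of normal forms to strata fails already on the normal forms themselves: for $(x,y,xz,z^2)$ one has $\mathbf{a_{011}}=0$, so your matrix has rank $1$ (not $2$) and $\mathbf{a_{002}}$ is independent of its column, which your rule would send to $(x,y,z^2,0)$; and $(x,y,xz+z^2,yz)$, for which your matrix has rank $2$ and $\mathbf{a_{002}}$ trivially lies in its span, is $\mathscr{A}^2$-equivalent to $(x,y,xz,z^2)$, not to $(x,y,xz,yz)$ as your rule predicts. The invariants that actually separate the five orbits are the rank of the full $2\times 3$ matrix $A=\left[\begin{array}{ccc}\mathbf{a_{101}} & \mathbf{a_{011}} & \mathbf{a_{002}}\end{array}\right]$ together with the vanishing or not of $\mathbf{a_{002}}$; the latter is invariant because $\mathbf{a_{002}}$ is, up to a nonzero scalar and the $GL(2,\mathbb{R})$-action on the target pair, the intrinsic vector $II(u_\infty,u_\infty)$. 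This is precisely the case division of Table \ref{A2 conditions}, and the explicit reductions in each case are carried out in the paper's proof.

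Two smaller points. Completeness of the list does not follow from a codimension count --- the codimensions of the orbits need not sum to anything related to $\dim \Sigma^1J^2(3,4)$ --- but simply from the exhaustiveness of the case division on $(\rank A,\ \mathbf{a_{002}})$. And cross-checking against the curvature-locus types is circular in this paper, since those types (Proposition \ref{types34}) are themselves derived from the orbit list you are trying to prove.
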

\begin{proof}
Let $f:(\mathbb{R}^3,0)\rightarrow(\mathbb{R}^4,0)$ be
a corank one map germ given by Monge form as in (\ref{Monge3man}) with $k=1$.
Take coordinates $(X,Y,Z,W)$ in the target. The change in the target given by $X'=X$, $Y'=Y$, $Z'=Z-a_{200}^1X^2-a_{020}^1Y^2-a_{110}^1XY$ and $W'=W-a_{200}^2X^2-a_{020}^2Y^2-a_{110}^2XY$ removes the terms with $x^2$, $y^2$ and $xy$ of the last two coordinates. Hence,
$$
j^2f(0)\sim_{\mathscr{A}^2}(x,y,a^1_{101}xz + a^1_{011}yz + a^1_{002}z^2,a^2_{101}xz + a^2_{011}yz + a^2_{002}z^2).
$$
Here the matrix $A$ is given by 
\begin{equation}
  A=\left(
\begin{array}{ccc}
a^1_{101} & a^1_{011} & a^1_{002} \\
a^2_{101} & a^2_{011} & a^2_{002} \\
\end{array}
\right).
\label{matriz A}
\end{equation}
and consider the $2\times 2$ minors $\alpha=a_{101}^1a_{011}^2-a^1_{011}a^2_{101}$, $\beta=a^1_{011}a^2_{002}-a^1_{002}a^2_{011}$ and $\gamma=a^1_{101}a^2_{002}-a^1_{002}a^2_{101}$.
We will consider the case $\rank(A)=2$. The remaining cases are analogous. If $||{\bf a_{002}}||\neq0$, suppose $a^1_{002}\neq0$.
After the change in the target $W''=-W'+\frac{a^2_{002}}{a^1_{002}}Z'$ (keeping the others coordinates unchanged), in the source $\tilde{z}=z-\frac{a^1_{101}}{2a^1_{002}}x-\frac{a^1_{011}}{2a^1_{002}}y$ and eliminating the terms with $x^2$, $y^2$ and $xy$ of the last two coordinates, we obtain
$$
j^2f(0)\sim_{\mathscr{A}^2}(x,y,a^1_{002}z^2,\frac{\gamma}{a^1_{002}}xz+\frac{\beta}{a^1_{002}}yz)\sim_{\mathscr{A}^2}(x,y,z^2,\gamma xz+\beta yz).
$$
Notice that $(\beta,\gamma)\neq(0,0)$, otherwise since $(a^1_{002},a^2_{002})\neq(0,0)$, we would have also $\alpha=0$. The change in the source given by $x'=x-\frac{\beta}{\gamma}y$ (assuming $\gamma\neq0$) followed by the change in the target $\tilde{X}=X'+\frac{\beta}{\gamma}Y'$, $\tilde{W}=\frac{1}{\gamma}W''$ provides us that
$j^2f(0)\sim_{\mathscr{A}^2}(x,y,xz,z^2)$. However, if $(a^1_{002},a^2_{002})=(0,0)$, $\alpha\neq0$ and $(a^{1}_{101},a^2_{101})\neq(0,0)$. Considering $a^{1}_{101}\neq0$, the change in the target $W'=W-\frac{a^{2}_{101}}{a^{1}_{101}}Z$ provides that
$$j^2f(0)\sim_{\mathscr{A}^2}(x,y,a^1_{101}xz+a^1_{011}yz,\frac{\alpha}{a^1_{101}}yz)\sim_{\mathscr{A}^2}(x,y,xz+\frac{a^1_{011}}{a^1_{101}}yz,yz).$$
Finally, the change in the target $Z'=Z-\frac{a^1_{011}}{a^1_{101}}W$ provides $j^{2}f(0)\sim_{\mathscr{A}^2}(x,y,xz,yz)$.
\end{proof}

\begin{rem}
Considering $f:(\mathbb{R}^3,0)\rightarrow(\mathbb{R}^4,0)$
a corank one map germ given in Monge form as in (\ref{Monge3man}) and the the matrix $A$ given as in (\ref{matriz A}), 
Table \ref{A2 conditions} presents conditions on the coefficients
to identify when the $2$-jet is equivalent to one of the five normal forms of Proposition \ref{A2 orbits}.

\begin{table}[h]
\caption{Conditions over the coefficients of the $2$-jet for the $\mathscr{A}^{2}$-classification of corank 1 map germ $(\mathbb{R}^3,0)\rightarrow(\mathbb{R}^4,0)$.}
\centering
{\begin{tabular}{ccc}
\hline
$\mathscr{A}^{2}$-normal form & Conditions\\
\hline
$(x,y,xz,z^2)$ &  $\rank(A)=2\,\,\, \mbox{and}\,\,\, ||{\bf a_{002}}||>0$ \\ \cr
$(x,y,xz,yz)$ & $\rank(A)=2\,\,\, \mbox{and}\,\,\, ||{\bf a_{002}}||=0$ \\ \cr
$(x,y,z^2,0)$ & $\rank(A)=1\,\,\, \mbox{and}\,\,\, ||{\bf a_{002}}||>0$\\ \cr
$(x,y,xz,0)$ & $\rank(A)=1\,\,\, \mbox{and}\,\,\, ||{\bf a_{002}}||=0$ \\ \cr
$(x,y,0,0)$ & $\rank(A)=0$. \cr
\hline
\end{tabular}
}
\label{A2 conditions}
\end{table}
\end{rem}

\begin{lem}\label{change}
Let $M^{3}_{\sing}$ be a corank 1 surface in $\mathbb R^{3+k}$ given by the image of Monge form $f$ as (\ref{Monge3man}). If $|| {\bf a_{002}}||\neq0$ then using smooth changes of coordinates in the source and isometries in the target, we can reduce $j^2f$ to the form
\begin{equation}\label{Monge4}
j^2f_\ell(q)=\frac12(\bar{a}^\ell_{200}x^2 + 2\bar{a}^\ell_{110}xy + \bar{a}^\ell_{020}y^2 + 2\bar{a}^\ell_{101}xz + 2\bar{a}^\ell_{011}yz) \mbox{ for $\ell=1,\ldots,k$}
\end{equation}

and
\begin{equation}\label{Monge5}
j^2f_{k+1}(q)=\frac12(\bar{a}^{k+1}_{200}x^2 + 2\bar{a}^{k+1}_{110}xy + \bar{a}^{k+1}_{020}y^2 + z^2), 
\end{equation}
where
$$
\begin{array}{l}
\displaystyle\bar{a}^{k+1}_{200}=\frac{1}{|| {\bf a_{002}}||}\left(\langle {\bf a_{002}},{\bf a_{200}}\rangle-\frac{\langle {\bf a_{002}},{\bf a_{101}}\rangle^2}{|| {\bf a_{002}}||^2}\right),\\
\displaystyle\bar{a}^{k+1}_{020}=\frac{1}{|| {\bf a_{002}}||}\left(\langle {\bf a_{002}},{\bf a_{020}}\rangle-\frac{\langle {\bf a_{002}},{\bf a_{011}}\rangle^2}{|| {\bf a_{002}}||^2}\right),\\
\displaystyle\bar{a}^{k+1}_{110}=\frac{1}{|| {\bf a_{002}}||}\left(\langle {\bf a_{002}},{\bf a_{110}}\rangle-\frac{\langle {\bf a_{002}},{\bf a_{101}}\rangle\langle {\bf a_{002}},{\bf a_{011}}\rangle}{|| {\bf a_{002}}||^2}\right).
\end{array}
$$
Moreover, when $rank(A)=1$, where $A$ is given as in (\ref{matrixA}), the coefficients $\bar{a}^\ell_{101}$ and $\bar{a}^\ell_{011}$ are zero for $\ell=1,\ldots,k$.
\end{lem}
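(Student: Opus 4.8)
The plan is to reduce $j^2f$ in two stages, exactly mirroring the passage to the normal form (\ref{special}) in the proof of Proposition \ref{axialcurvatureR4}, but with the extra source variable $z$. First I would use an isometry of the target to line up the coefficient vector ${\bf a_{002}}$ with the last normal direction; then I would complete the square in $z$ in the last coordinate function by a shear in the source; and finally I would normalise the $z^2$-coefficient by a scaling of $z$.

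More precisely, since the Monge form (\ref{Monge3man}) has tangent space spanned by the first two coordinate directions and normal space $N_pM^3_{\sing}$ spanned by the last $k+1$, I would take the rotation $R\in SO(k+1)$ of the normal space sending the unit vector ${\bf a_{002}}/||{\bf a_{002}}||$ to $e_{k+1}$. Because $R$ is an isometry it preserves inner products, so after applying $R$ (which fixes the source coordinates $(x,y,z)$ and the Monge structure) the $(k+1)$-th component of every transformed coefficient vector is its projection onto the ${\bf a_{002}}$-direction, namely $\tilde a^{k+1}_{pqr}=\langle {\bf a_{pqr}},{\bf a_{002}}\rangle/||{\bf a_{002}}||$; in particular $\tilde a^{k+1}_{002}=||{\bf a_{002}}||$ while $\tilde a^\ell_{002}=0$ for $\ell=1,\ldots,k$. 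This already puts the first $k$ functions in the $z^2$-free form required by (\ref{Monge4}).

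Next I would complete the square in $z$ in the last coordinate function, whose $z$-dependent part is $\tilde a^{k+1}_{101}xz+\tilde a^{k+1}_{011}yz+\tfrac{||{\bf a_{002}}||}{2}z^2$. The source shear $z\mapsto z-\frac{\langle {\bf a_{101}},{\bf a_{002}}\rangle}{||{\bf a_{002}}||^2}x-\frac{\langle {\bf a_{011}},{\bf a_{002}}\rangle}{||{\bf a_{002}}||^2}y$ kills the $xz,yz$ terms of $f_{k+1}$ and, by the standard completing-the-square identity, subtracts $\tfrac{||{\bf a_{002}}||}{2}(\tfrac{\tilde a^{k+1}_{101}}{||{\bf a_{002}}||}x+\tfrac{\tilde a^{k+1}_{011}}{||{\bf a_{002}}||}y)^2$ from its quadratic part; reading off the $x^2$, $xy$ and $y^2$ coefficients and substituting $\tilde a^{k+1}_{pqr}=\langle {\bf a_{pqr}},{\bf a_{002}}\rangle/||{\bf a_{002}}||$ reproduces exactly the stated formulas for $\bar a^{k+1}_{200}$, $\bar a^{k+1}_{110}$ and $\bar a^{k+1}_{020}$. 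A final scaling $z\mapsto z/\sqrt{||{\bf a_{002}}||}$ normalises the $z^2$-coefficient of $f_{k+1}$ to $1$, giving (\ref{Monge5}). The main delicate point, more bookkeeping than obstacle, is to verify that these source changes preserve the required form (\ref{Monge4}) of the first $k$ functions: the shear is linear in $z$ and the $f_\ell$ with $\ell\le k$ carry no $z^2$ term, so neither the shear nor the scaling can reintroduce a $z^2$ term into them, and the scaling only rescales their coefficients $\bar a^\ell_{101},\bar a^\ell_{011}$, on which the lemma imposes no condition in this case.

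Finally, for the ``moreover'' statement I would use that $||{\bf a_{002}}||\neq0$ together with $\rank(A)=1$ forces ${\bf a_{101}}$ and ${\bf a_{011}}$ to be scalar multiples of ${\bf a_{002}}$. The rotation $R$ then sends all three vectors to the $e_{k+1}$-axis, so $\tilde a^\ell_{101}=\tilde a^\ell_{011}=0$ for $\ell\le k$; since these $f_\ell$ now carry no $z$-dependence at all, the subsequent source changes leave them untouched and $\bar a^\ell_{101}=\bar a^\ell_{011}=0$, as claimed.
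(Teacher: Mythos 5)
Your proof is correct and follows essentially the same route as the paper: an isometry of the normal space aligning ${\bf a_{002}}$ with the last coordinate axis (the paper realises your single rotation $R$ as a composition of planar rotations of angles $\arctan(\tilde a^{m}_{002}/\tilde a^{k+1}_{002})$), followed by the completing-the-square shear in $z$ and the rescaling of $z$; your bookkeeping of the resulting coefficients and your argument for the \emph{moreover} clause match the paper's. No gaps.
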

\begin{proof} 
Consider $|| {\bf a_{002}}||\neq0$. Suppose, without loss of generality, that $a^{k+1}_{002}\neq0$. Taking  the  rotation in $\mathbb R^{3+k}$ of angle  $\gamma=\displaystyle\arctan\left(\frac{a^{1}_{002}}{a^{k+1}_{002}}\right)$ we can eliminate the coefficient of $z^2$ of $f_{1}$. In this case, we denote by $\tilde{f}$ the new normal form and by $\tilde{a}^\ell_{ijk}$ the coefficients of its 2-jet. 
After successive rotations in $\mathbb R^{k+3}$ with angle $\gamma=\displaystyle\arctan\left(\frac{\tilde{a}^{m}_{002}}{\tilde{a}^{k+1}_{002}}\right)$, $m=2,\ldots,k$, we eliminate all the coefficients of $z^2$ of the normal form, except in the last coordinate. So the 2-jet is
$$
\frac12(\tilde{a}^\ell_{200}x^2 + 2\tilde{a}^\ell_{110}xy + \tilde{a}^\ell_{020}y^2 + 2\tilde{a}^\ell_{101}xz + 2\tilde{a}^\ell_{011}yz)
$$
in coordinates $\ell=1,\ldots,k$ and
$$
\frac12(\tilde{a}^{k+1}_{200}x^2 + 2\tilde{a}^{k+1}_{110}xy + \tilde{a}^{k+1}_{020}y^2 + 2\tilde{a}^{k+1}_{101}xz + 2\tilde{a}^{k+1}_{011}yz+\tilde{a}^{k+1}_{002}z^2)
$$
in the last coordinate. Considering the changes of coordinates in the source $\displaystyle z= z'- \frac{\tilde{a}^{k+1}_{101}}{2\tilde{a}^{k+1}_{002}}x- \frac{\tilde{a}^{k+1}_{011}}{2\tilde{a}^{k+1}_{002}}y$ and then $z''=\frac{1}{\sqrt{|| {\bf a_{002}}||}}z'$ we obtain the desired normal form. 

Moreover, when $rank(A)=1$ we have that $\displaystyle a^\ell_{101}a^{k+1}_{002}-a^\ell_{002}a^{k+1}_{101}=0$ and $\displaystyle a^\ell_{011}a^{k+1}_{002}-a^\ell_{002}a^{k+1}_{011}=0$ for $\ell=1,\ldots,k$. Since the coefficients $\bar{a}^\ell_{101}$ and $\bar{a}^\ell_{011}$ contains these components as a factor in their expressions, they are 0 for $\ell=1,\ldots,k$.
\end{proof}

\begin{rem}
Taking normal sections of $M^{3}_{\sing}$ with the normal form from Lemma \ref{change} we obtain a singular surface $M^{2}_{\sing}\subset\mathbb R^{k+3}$ with primary axial vector given by $v^1_{a}=(0,\ldots,0,1)$. Notice that the coefficients $\bar{a}^{k+1}_{200}$ and $\bar{a}^{k+1}_{020}$ coincide with the primary axial curvatures of the normal sections obtained by $\{y=0\}$ and $\{x=0\}$ respectively (see Proposition \ref{axialcurvatureR4}).
\end{rem}

An analysis of the conditions in Table 1 can shred some light on the type of loci we can have in each orbit by following the ideas in the proof of Theorem 3.9 in \cite{BenediniRuasSacramento}, however, there is a more geometrical way of doing this. Consider a tangent direction $u\in T_pM^3_{\sing}$, we call the singular surface $M^3_{\sing}\cap\{u=0\}$ the normal section of $M^3_{\sing}$ in the direction $u$. Following the proof of Theorem 3.3 in \cite{BenediniOset2}, we have that the curvature locus of $M^3_{\sing}$ is generated by the union of the curvature loci of the normal sections. All the normal sections are corank 1 singular surfaces in $\mathbb R^3$ and the type of locus which can appear have been studied in \cite{MartinsBallesteros}. We can use this information to get the following.

\begin{prop}\label{types34}
Let $M^{3}_{\sing}\subset\mathbb R^4$ be parametrised by $f$, then 
\begin{enumerate}
    \item[i)]  $j^2f(0)\sim_{\mathscr{A}^2} (x,y,xz,z^2)$ if and only if $\Delta_p$ is a planar region,
    \item[ii)]  $j^2f(0)\sim_{\mathscr{A}^2} (x,y,xz,yz)$ if and only if $\Delta_p$ is a plane,
    \item[iii)]  $j^2f(0)\sim_{\mathscr{A}^2} (x,y,z^2,0)$ if and only if $\Delta_p$ is a half-strip (which may degenerate to a half-line),
    \item[iv)]  $j^2f(0)\sim_{\mathscr{A}^2} (x,y,xz,0)$ if and only if $\Delta_p$ is a strip (which may degenerate to a line),
    \item[v)]  $j^2f(0)\sim_{\mathscr{A}^2} (x,y,0,0)$ if and only if $\Delta_p$ is the curvature locus of a regular surface in $\mathbb R^4$ (ellipse, segment or point).
\end{enumerate}

\end{prop}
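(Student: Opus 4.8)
The plan is to compute $\Delta_p$ directly from a general Monge form and show that its affine type is governed entirely by the two discrete invariants $\rank(A)$ and whether $\|{\bf a_{002}}\|$ is positive or zero. Since Table~\ref{A2 conditions} already matches these same invariants with the five $\mathscr A^2$-orbits, the five biconditionals will follow at once. Concretely I would take $f$ in the Monge form (\ref{Monge3man}) with $k=1$ (so $\ell=1,2$), read off the coefficients of $II$ as in Section~\ref{prelim}, and write the curvature locus on the cylinder $C_q=\{\alpha^2+\beta^2=1\}$ as
\begin{equation*}
\eta(\alpha,\beta,\gamma)={\bf A}(\alpha,\beta)+{\bf B}(\alpha,\beta)\,\gamma+{\bf a_{002}}\,\gamma^2,
\end{equation*}
where ${\bf A}(\alpha,\beta)=\alpha^2{\bf a_{200}}+2\alpha\beta{\bf a_{110}}+\beta^2{\bf a_{020}}$ and ${\bf B}(\alpha,\beta)=2(\alpha{\bf a_{101}}+\beta{\bf a_{011}})$. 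For each fixed $(\alpha,\beta)$ on the circle this is the curvature parabola of the corresponding normal section (in the sense of \cite{MartinsBallesteros}), and, as recalled just before the statement, $\Delta_p$ is the union of these parabolas. The structural point is that the leading coefficient ${\bf a_{002}}$ is common to the whole family, so every parabola opens in the same direction.

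I would then argue case by case on $(\rank(A),\|{\bf a_{002}}\|)$. If $\rank(A)=0$ then ${\bf a_{101}}={\bf a_{011}}={\bf a_{002}}=0$, the $\gamma$-dependence drops out, and $\Delta_p=\{{\bf A}(\alpha,\beta):\alpha^2+\beta^2=1\}$ is the image of the unit circle under a quadratic map, that is, the curvature ellipse of a regular surface in $\mathbb R^4$ (an ellipse, a segment or a point); this is case (v). If $\rank(A)=1$ then ${\bf a_{101}},{\bf a_{011}},{\bf a_{002}}$ are all proportional to a single unit vector ${\bf v}$, so both ${\bf B}$ and ${\bf a_{002}}$ point along ${\bf v}$. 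Projecting onto ${\bf v}^{\perp}$ annihilates the $\gamma$-terms, whence the ${\bf v}^{\perp}$-coordinate of $\Delta_p$ equals $\langle{\bf A}(\alpha,\beta),{\bf v}^{\perp}\rangle$, a quadratic form on the circle whose range is a closed interval (a point in the degenerate case). Along ${\bf v}$ each parabola is a half-line when $\|{\bf a_{002}}\|>0$ and a full line when $\|{\bf a_{002}}\|=0$; taking the union over the circle then yields a half-strip, respectively a strip, collapsing to a half-line, respectively a line, exactly when that interval degenerates to a point. These are cases (iii) and (iv).

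It remains to treat $\rank(A)=2$, where the parabolas are genuinely two-dimensional. When $\|{\bf a_{002}}\|=0$ they degenerate to lines with direction ${\bf B}(\alpha,\beta)$, and since ${\bf a_{101}},{\bf a_{011}}$ span $\mathbb R^2$ the vector ${\bf B}(\alpha,\beta)$ runs through every direction as $(\alpha,\beta)$ traverses the circle, so the union of these lines is all of $\mathbb R^2$; this is case (ii). When $\|{\bf a_{002}}\|>0$ I would pass to the reduced form of Lemma~\ref{change}, in which the second coordinate of $\eta$ equals ${\bf A}_2(\alpha,\beta)+\gamma^2$ and is therefore bounded below, while the family of upward parabolas sweeps a full neighbourhood above its lower envelope; hence $\Delta_p$ is a proper planar region, case (i). To close every biconditional I would invoke Table~\ref{A2 conditions}: the pair $(\rank(A),\ \|{\bf a_{002}}\|>0\text{ or }=0)$ is constant on each $\mathscr A^2$-orbit and separates the five orbits, and the five locus types just obtained are pairwise distinct as affine subsets of the plane (distinguished by dimension, by whether they lie between two parallel lines, and by their number of unbounded directions). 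As the five cases are mutually exclusive and exhaustive, the implications ``orbit $\Rightarrow$ type'' established above reverse automatically, giving all five equivalences.

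The step I expect to be the main obstacle is pinning down the exact affine type in the two-dimensional cases together with their degenerations. In particular one must resist substituting the $\mathscr A^2$-normal forms directly: for the representative $(x,y,z^2,0)$ one has ${\bf A}\equiv 0$, so it produces only a half-line and hides the generic half-strip, and the argument must instead be carried out with the full quadratic part ${\bf A}(\alpha,\beta)$ retained. The most delicate point is separating case (i) from case (ii) --- showing that for $\rank(A)=2$ the locus is a proper region when $\|{\bf a_{002}}\|>0$ but the whole plane when $\|{\bf a_{002}}\|=0$ --- and here the reduced normal form of Lemma~\ref{change}, which keeps the $z^2$-coefficient equal to $1$ and exhibits the lower bound on the second coordinate, is what makes the distinction clean.
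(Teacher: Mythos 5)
Your proof is correct and follows essentially the same route as the paper's: both decompose $\Delta_p$ as the union of the curvature parabolas of the normal sections, read off the affine type from $\rank(A)$ and $\|{\bf a_{002}}\|$ using the reduced form of Lemma~\ref{change} with the full quadratic part retained (exactly the pitfall you flag about not substituting the $\mathscr{A}^2$-normal forms), and close the biconditionals by mutual exclusivity via Table~\ref{A2 conditions}. The only difference is organizational: you case-split on the invariant pair $(\rank(A),\|{\bf a_{002}}\|)$, while the paper runs orbit by orbit.
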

\begin{proof}
When $j^2f(0)\sim_{\mathscr{A}^2} (x,y,xz,z^2)$, all normal sections of type $y=\lambda x$ give corank one surfaces whose 2-jet is $\mathscr{A}^2$-equivalent to $(x,xz,z^2)$. The curvature locus of these sections is a non-degenerate parabola with axial vector $(0,1)$ in the normal plane. The normal section $x=0$ gives a 2-jet $\mathscr{A}^2$-equivalent to $(y,0,z^2)$, whose curvature locus is a half line in the direction $(0,1)$. The union of all these curvature loci gives a ``parabolic" planar region. This region is not the whole plane because the 2-jet of the last component of the parametrisation of the curvature locus by Lemma \ref{change} can be taken to $a^2_{200}x^2 + 2a^2_{110}xy + a^2_{020}y^2+z^2$ where $x^2+y^2=1$, which is a bounded function plus $z^2$, which is positive, so it is bounded on the bottom.

When $j^2f(0)\sim_{\mathscr{A}^2} (x,y,xz,yz)$, the normal section $x=0$ gives a 2-jet equivalent to $(y,0,yz)$, whose curvature locus is a line in the direction $(0,1)$ and the section $y=0$ gives a 2-jet equivalent to $(x,xz,0)$, whose curvature locus is a line in the direction $(1,0)$. The rest of normal sections give lines in any direction between $(0,1)$ and $(1,0)$, so the curvature locus of the 3-manifold is the whole plane.

When $j^2f(0)\sim_{\mathscr{A}^2} (x,y,z^2,0)$, all normal sections have a half-line in the direction $(1,0)$ as curvature loci. By Lemma \ref{change}, the curvature locus of the 3-manifold can be taken to $(a^1_{200}x^2 + 2a^1_{110}xy + a^1_{020}y^2+z^2,a^2_{200}x^2 + 2a^2_{110}xy + a^2_{020}y^2)$ where $x^2+y^2=1$. The curvature locus is bounded in the direction $(0,1)$ because the last component of the curvature locus is a bounded function. On the other hand the component in the direction $(1,0)$ is a bounded function plus $z^2$, which is positive, so it is bounded on the left. We therefore have a strip bounded on the left, which can degenerate to a half-line.

When $j^2f(0)\sim_{\mathscr{A}^2} (x,y,xz,0)$, all normal sections have lines in the direction $(1,0)$ as curvarutre loci, except for the section $x=0$, whose curvature locus is a point. The curvature locus of the 3-manifold is a strip unbounded in the direction $(1,0)$. By item e) in Proposition 4.13 in \cite{BenediniRuasSacramento} adapted for $k=1$ (the proof is the same), the last component of the curvature locus can be taken to $a^2_{200}x^2 + 2a^2_{110}xy + a^2_{020}y^2$ where $x^2+y^2=1$, so the curvature locus is bounded in the direction $(0,1)$.

When $j^2f(0)\sim_{\mathscr{A}^2} (x,y,0,0)$, by item f) in Proposition 4.13 in \cite{BenediniRuasSacramento} adapted for $k=1$ (the proof is the same), the curvature locus coincides with the curvature locus of a parametrisation of type $(x,y,\frac12(a^1_{200}x^2 + 2a^1_{110}xy + a^1_{020}y^2),\frac12(a^2_{200}x^2 + 2a^2_{110}xy + a^2_{020}y^2))$, and so the curvature locus can be any type of curvature locus of a regular surface in $\mathbb R^4$, i.e. a non-degenerate ellipse, a segment or a point.
\end{proof}

\begin{ex}\label{ex.3var4}
We shall present examples of curvature locus for each possibility in Proposition \ref{types34}. Let $M^{3}_{\sing}\subset\mathbb R^4$ be locally parametrised by $f:(\mathbb{R}^3,0)\rightarrow(\mathbb{R}^4,0)$.
\begin{enumerate}
    \item[i)] Taking $f(x,y,z)=(x,y,\frac{3}{2}x^2+xy+\frac{1}{2}y^2+\frac{1}{2}z^2,x^2+\frac{5}{2}y^2+\frac{1}{2}xz)$, $\Delta_p$ is a planar region (Figure \ref{orbitas} Planar region);
    \item[ii)] Taking $f(x,y,z)=(x,y,\frac{3}{2}x^2+\frac{1}{2}xy+\frac{1}{2}y^2+\frac{1}{2}yz,x^2+\frac{5}{2}y^2+\frac{1}{2}xz)$, $\Delta_p$ is a plane (Figure \ref{orbitas} Plane);
    \item[iii)] For $f(x,y,z)=(x,y,\frac{3}{2}x^2+xy+\frac{1}{2}y^2+\frac{1}{2}z^2,x^2+\frac{5}{2}y^2)$, $\Delta_p$ is a half-strip (Figure \ref{orbitas} Half-strip);
    \item[iv)] For $f(x,y,z)=(x,y,\frac{3}{2}x^2+\frac{1}{2}xy+\frac{1}{2}y^2,x^2+\frac{5}{2}y^2+\frac{1}{2}xz)$ $\Delta_p$ is a strip (Figure \ref{orbitas} Strip);
    \item[v)] Finally, taking $f(x,y,z)=(x,y,\frac{3}{2}x^2+xy+\frac{1}{2}y^2,x^2+\frac{5}{2}y^2)$, $\Delta_p$ is an ellipse (Figure \ref{orbitas} Ellipse).
\end{enumerate}
The curvature loci are not completely depicted in Figure \ref{orbitas}, the planar region and the half-strip should be extended infinitely on the right, the figure called plane should be extended infinitely in every direction and the strip should be extended infinitely up and down. 

\begin{figure}[!htb]
\includegraphics[height=7cm,width=12cm]{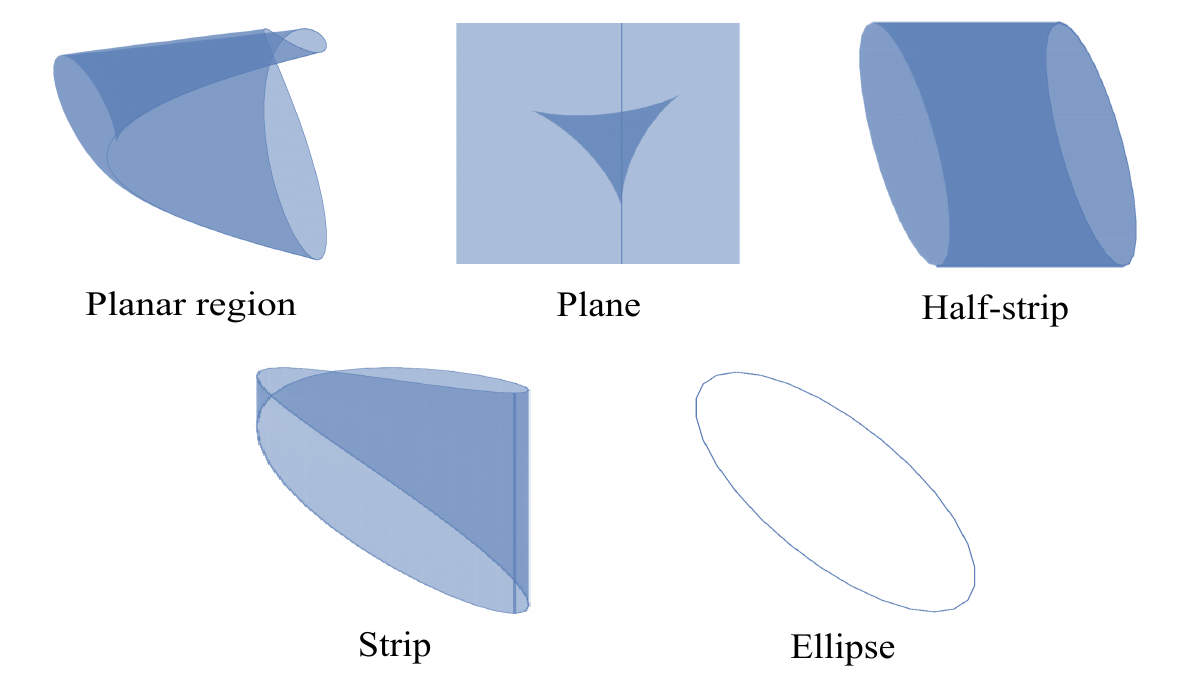}
\caption{Curvature loci for the different orbits for $M^{3}_{\sing}\subset\mathbb R^4$.}
\label{orbitas}
\end{figure}

\end{ex}

Now we can define our adapted frame for $Ax_p$. Here $l=2$, so it is enough to define the primary axial vector.

\begin{enumerate}
    \item [i)] When $II(u_{\infty},u_{\infty})\neq 0$, $v_a^1$ is defined as in Definition \ref{primaxialvect} and we can complete the basis in a unique way such that $\{v_a^1,v_a^2\}$ is a positively oriented orthonormal frame of $Ax_p$. This includes the first and third orbits in Proposition \ref{A2 orbits}.
    \item[ii)] If $II(u_{\infty},u_{\infty})=0$, then $f$ is either in the second, fourth or fifth orbit in Proposition \ref{A2 orbits}. In the orbit $(x,y,xz,yz)$ the curvature locus is a plane and we take any orthonormal frame as an adapted frame. 
    \item[iii)] For the orbit $(x,y,xz,0)$ we take as $v_a^1$ the direction in which $\Delta_p$ is not bounded, i.e. the direction of the strip. 
    \item[iv)] Finally, for $(x,y,0,0)$, if $\Delta_p$ is a non-degenerate ellipse, take as $v_a^1$ and $v_a^2$ the unitary vectors in the directions of the semi-major and semi-minor axes, respectively, such that $\{v_a^1,v_a^2\}$ is a positively oriented orthonormal frame of $Ax_p$. If $\Delta_p$ is a segment, take as $v_a^1$ the direction of the segment and complete to obtain an orthonormal basis. If $\Delta_p$ is a point $y\neq p$ take $v_a^1$ such that $v_a^2=y/|y|$. If $y=p$ any orthonormal frame is an adapted frame.
\end{enumerate}

When $n=5$, $l=3$, so we have to define an adapted frame with 3 axial vectors. In \cite{BenediniRuasSacramento} there is a result similar to Proposition \ref{A2 orbits}.
\begin{prop}\label{A2 orbits5}(\cite{BenediniRuasSacramento})
There are six $\mathscr{A}^2$-orbits in $\Sigma^1J^2(3,5)$:
$$
(x,y,xz,yz,z^2),\ (x,y,z^2,yz,0),\ (x,y,xz,yz,0),\ (x,y,z^2,0,0),
$$ 
$$
 (x,y,xz,0,0)\ \mbox{and}\ (x,y,0,0,0).
$$
\end{prop}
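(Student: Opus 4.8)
The plan is to mirror the proof of Proposition \ref{A2 orbits}, now with $k=2$, so that $N_pM^3_{\sing}$ has dimension $3$ and the matrix $A$ from \eqref{matrixA} is $3\times 3$. First I would put $f$ in the Monge form \eqref{Monge3man} and, exactly as in the first step of Proposition \ref{A2 orbits}, use changes in the target of the form $X'_\ell=X_\ell-(\text{quadratic in }x,y)$ to delete every $x^2$, $xy$ and $y^2$ monomial from the three non-trivial coordinates. This reduces $j^2f(0)$ to
$$
(x,y,\ a^1_{101}xz+a^1_{011}yz+a^1_{002}z^2,\ a^2_{101}xz+a^2_{011}yz+a^2_{002}z^2,\ a^3_{101}xz+a^3_{011}yz+a^3_{002}z^2),
$$
so that the whole $2$-jet is encoded by the $3\times 3$ matrix $A=[\,{\bf a_{101}}\ {\bf a_{011}}\ {\bf a_{002}}\,]$. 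The residual group is generated by: linear changes in $(x,y)$ (acting by $\mathrm{GL}_2$ on the first two columns of $A$), the scaling and shear $z\mapsto\lambda z$, $z\mapsto z+\alpha x+\beta y$ (which rescale the columns and add multiples of ${\bf a_{002}}$ to ${\bf a_{101}}$ and ${\bf a_{011}}$), and the change of normal frame $\mathrm{GL}_3$ in the target (acting by $A\mapsto MA$).

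The two invariants driving the classification are $\rank(A)$ and whether $\|{\bf a_{002}}\|>0$, precisely the quantities appearing in Table \ref{A2 conditions} for the analogous $k=1$ case. Both are genuine $\mathscr{A}^2$-invariants: $\rank(A)$ is preserved by left and right multiplication by invertible matrices and by the column operations above, while ${\bf a_{002}}=\II(u_{\infty},u_{\infty})$ is only rescaled by $z\mapsto\lambda z$ and transformed by $M$ in the target, so the condition ${\bf a_{002}}=0$ is invariant. The geometric meaning is that $\|{\bf a_{002}}\|>0$ detects whether a genuine $z^2$ term is present: when it is, I would complete the square in $z$ exactly as in Lemma \ref{change} to place a clean $z^2$ in the last coordinate and absorb the leftover $xz$, $yz$, $x^2$, $xy$, $y^2$ terms by source and target changes; when ${\bf a_{002}}=0$ no $z^2$ exists and none can be produced at the level of $2$-jets.

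Then I would run the case analysis on $r=\rank(A)$. If $r=3$ then $A\in\mathrm{GL}_3$, so $A\mapsto A^{-1}A=\mathrm{Id}$ and we land on $(x,y,xz,yz,z^2)$; here $\|{\bf a_{002}}\|>0$ is automatic. If $\|{\bf a_{002}}\|=0$ the last column of $A$ vanishes and the problem becomes the rank normal form of the $3\times 2$ matrix $[\,{\bf a_{101}}\ {\bf a_{011}}\,]$ under $\mathrm{GL}_3$ on the left and $\mathrm{GL}_2$ on the right, giving $(x,y,xz,yz,0)$, $(x,y,xz,0,0)$ and $(x,y,0,0,0)$ for ranks $2,1,0$. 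If $\|{\bf a_{002}}\|>0$ and $r<3$, I extract the $z^2$ via Lemma \ref{change}; since ${\bf a_{002}}$ spans one dimension of the column space, the residual $xz$, $yz$ part has rank exactly $r-1$, yielding $(x,y,z^2,yz,0)$ for $r=2$ and $(x,y,z^2,0,0)$ for $r=1$. Collecting cases gives $1+2+2+1=6$ orbits, which are pairwise inequivalent because the pair $\big(\rank(A),\,\mathbf 1_{\{\|{\bf a_{002}}\|>0\}}\big)$ takes the six distinct values $(3,1),(2,1),(2,0),(1,1),(1,0),(0,0)$.

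The main obstacle I anticipate is the bookkeeping in the sub-case $\|{\bf a_{002}}\|>0$, $r<3$: after completing the square one must verify that the $xz$, $yz$ remainder genuinely has rank $r-1$ (so that no spurious independent direction survives) and choose the target rotation that places $z^2$ in the correct coordinate while clearing the last coordinate of its $xz$, $yz$ terms, exactly the delicate point already met in the proof of Proposition \ref{A2 orbits} and in Lemma \ref{change}. The inequivalence of the six forms is, by contrast, immediate from the invariants, so the real work is entirely in exhibiting the explicit reductions to each normal form.
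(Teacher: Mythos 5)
Your argument is correct, but note that the paper itself offers no proof of this proposition --- it is quoted from \cite{BenediniRuasSacramento} --- so the only internal comparison is with the proof of Proposition \ref{A2 orbits}, whose method (kill the $x^2,xy,y^2$ terms, encode the $2$-jet in the matrix $A$, and split into cases by $\rank(A)$ and the vanishing of ${\bf a_{002}}$) you reproduce faithfully for $k=2$. Your case analysis and the inequivalence argument via the invariant pair $\bigl(\rank(A),\|{\bf a_{002}}\|>0\bigr)$ are sound, so this stands as a valid self-contained proof of the cited result.
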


With a discussion similar to Proposition \ref{types34} we can see what type of curvature locus there is in each orbit. The normal sections for corank 1 3-manifolds in $\mathbb R^5$ are corank 1 singular surfaces in $\mathbb R^4$, which have been studied in \cite{Benedini/Sinha/Ruas}. 

\begin{enumerate}
    \item [i)] For the orbit $(x,y,xz,yz,z^2)$, the primary axial vector $v_a^1$ can be defined as in Definition \ref{primaxialvect}. By Lemma \ref{change} the parametrisation of the first two components of the curvature locus can be taken to $a^1_{200}x^2 + 2a^1_{110}xy + a^1_{020}y^2+2a^1_{101}xz+2a^1_{011}yz$ and $a^2_{200}x^2 + 2a^2_{110}xy + a^2_{020}y^2+2a^2_{101}xz+2a^2_{011}yz$, where $x^2+y^2=1$ and $z\in\mathbb R$. Since in this orbit $a^1_{101}\neq0\neq a^2_{011}$, these are two unbounded functions so we can choose any orthonormal basis of this plane to complete the adapted frame $\{v_a^1,v_a^2,v_a^3\}$.
    \item[ii)] For the orbit $(x,y,z^2,yz,0)$, $v_a^1$ can be defined as in Definition \ref{primaxialvect}. By item b) in Proposition 4.13 in \cite{BenediniRuasSacramento} there is a direction perpendicular to $v_a^1$ such that the curvature locus is unbounded in both directions and a direction in which it is bounded. Choose this unbounded direction as $v_a^2$. We complete with a third vector to obtain our orthonormal adapted frame.
    \item[iii)] For the orbit $(x,y,xz,yz,0)$, the curvature locus is unbounded in a plane, so we choose any orthonormal frame to be $\{v_a^1,v_a^2\}$ and complete in a unique way to obtain $v_a^3$. Notice that in the direction of $v_a^3$ the curvature locus is bounded.
    \item[iv)] For the orbit $(x,y,z^2,0,0)$, $v_a^1$ can be defined as in Definition \ref{primaxialvect}. By Lemma \ref{change} (or Proposition 4.13 in \cite{BenediniRuasSacramento}), any germ in this orbit can be taken by changes of coordinates in the source and rotations in the target to the form $(x,y,\frac12(a^1_{200}x^2 + 2a^1_{110}xy + a^1_{020}y^2+a^1_{002}z^2),\frac12(a^2_{200}x^2 + 2a^2_{110}xy + a^2_{020}y^2),\frac12(a^3_{200}x^2 + 2a^3_{110}xy + a^3_{020}y^2))$. The curvature locus is given by $(a^1_{200}x^2 + 2a^1_{110}xy + a^1_{020}y^2+a^1_{002}z^2,a^2_{200}x^2 + 2a^2_{110}xy + a^2_{020}y^2,a^3_{200}x^2 + 2a^3_{110}xy + a^3_{020}y^2)$ where $x^2+y^2=1$. For $z=0$ we get the curvature ellipse (maybe degenerate) of the regular surface given by $f(x,y,0)$. For any other constant $z_0$, $II(C_q\cap\{z=z_0\})$ is the same curvature ellipse translated by $(z_0,0,0)$, i.e. a translation in the direction of the primary axial vector. This means that the curvature locus is a half-strip contained in a plane. Choose $v_a^2$ to be the orthogonal vector to $v_a^1$ in this plane. Finally, $v_a^3$ is the vector perpendicular to this plane. When the strip degenerates to a half-line choose the plane that contains the locus and the origin in order to define $v_a^2$, and $v_a^3$ follows as above. If the origin lies in the line that contains the half-line, choose any plane that contains the curvature locus and proceed as above.
    \item[v)] For the orbit $(x,y,xz,0,0)$, take $v_a^1$ as the direction in which the curvature locus is unbounded. Arguing as for the previous orbit, the curvature locus is a strip contained in a plane unbounded in the direction of $v_a^1$. Choose $v_a^2,v_a^3$ as in the previous case.
    \item[vi)] For the orbit $(x,y,0,0,0)$, the curvature locus can be any curvature locus of a regular surface in $\mathbb R^5$ (an ellipse, a segment or a point). Choose $v_a^1$ to be the semi-major axis if it is an ellipse, the direction of the segment if it is a segment, or any direction perpendicular to the point otherwise. Consider the plane that contains the ellipse, the plane that contains the locus and the origin if it is a segment, or any plane that contains $v_a^1$, the locus and the origin if it is a point, and define $v_a^2,v_a^3$ as in the previous two orbits. If the line that contains the segment contains the origin, choose any plane that contains the segment. If the point is the origin, choose any plane.
\end{enumerate}

\begin{ex}\label{ex3var5}
Similarly to Example \ref{ex.3var4}, we can illustrate some types of curvature loci for corank 1 3-manifolds in $\mathbb{R}^5$.
Let $M^{3}_{\sing}\subset\mathbb R^5$ be locally parametrised by $f:(\mathbb{R}^3,0)\rightarrow(\mathbb{R}^5,0)$.
\begin{itemize}
    \item[i)] Let $f(x,y,z)=\frac{1}{2}(x,y,x^2+z^2,xy+xz,3x^2+y^2+yz)$, whose 2-jet is $\mathscr{A}^2$-equivalent to $(x,y,xz,yz,z^2)$. The curvature locus is showed in Figure \ref{Fig3varR5i};
    \item[ii)] Consider $f(x,y,z)=\frac{1}{2}(x,y,xz,yz,x^2+3xy+y^2)$. In this case, $j^{2}f(0)$ is $\mathscr{A}^2$-equivalent to $(x,y,xz,yz,0)$ and the curvature locus is showed in Figure \ref{Fig3varR5ii};
    \item[iii)] Finally, consider $f(x,y,z)=\frac{1}{2}(x,y,3x^2+2xy+y^2+z^2,2x^2+5y^2,x^2+2y^2)$, whose 2-jet is $\mathscr{A}^2$-equivalent to $(x,y,z^2,0,0)$. In this case, Figure \ref{Fig3varR5iii} illustrates the curvature locus, a half-strip. 
\end{itemize}

\begin{figure}[ht]
\centering
\includegraphics[height=3.5cm,width=10cm]{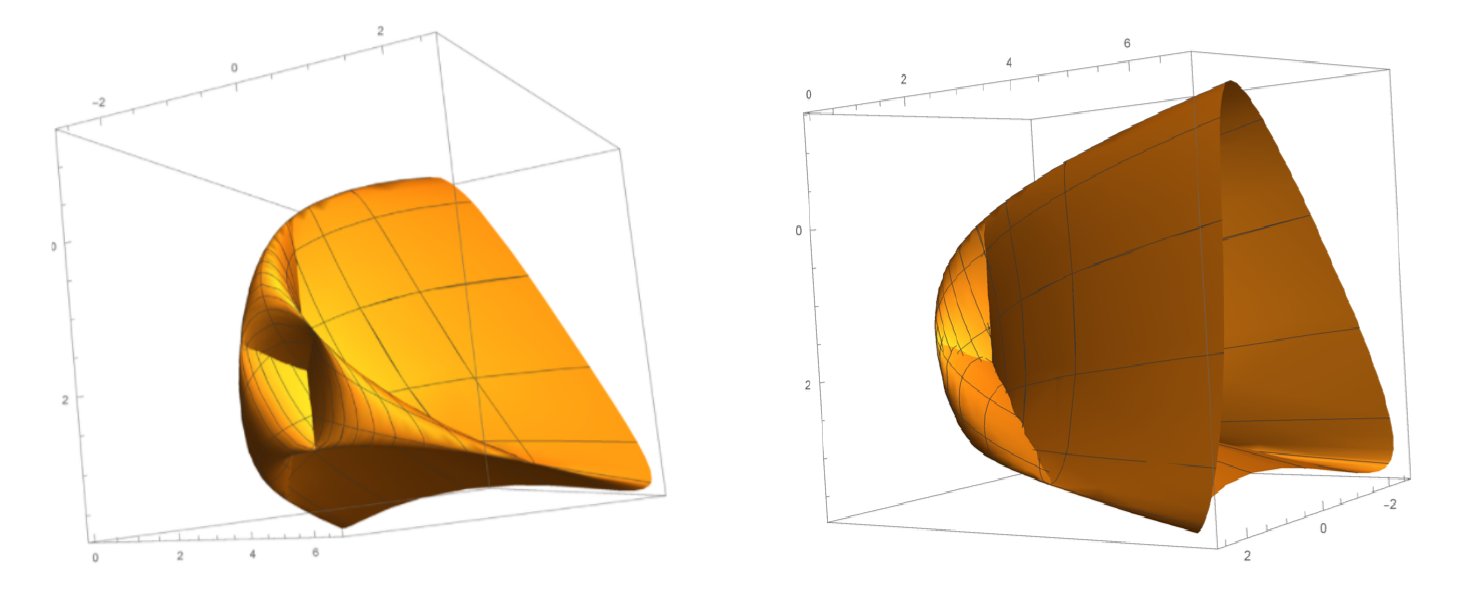}
\caption{Different views of $\Delta_p$ in Example \ref{ex3var5} (i).}
\label{Fig3varR5i}
\end{figure}

\begin{figure}[ht]
\centering
\includegraphics[height=3.5cm,width=10cm]{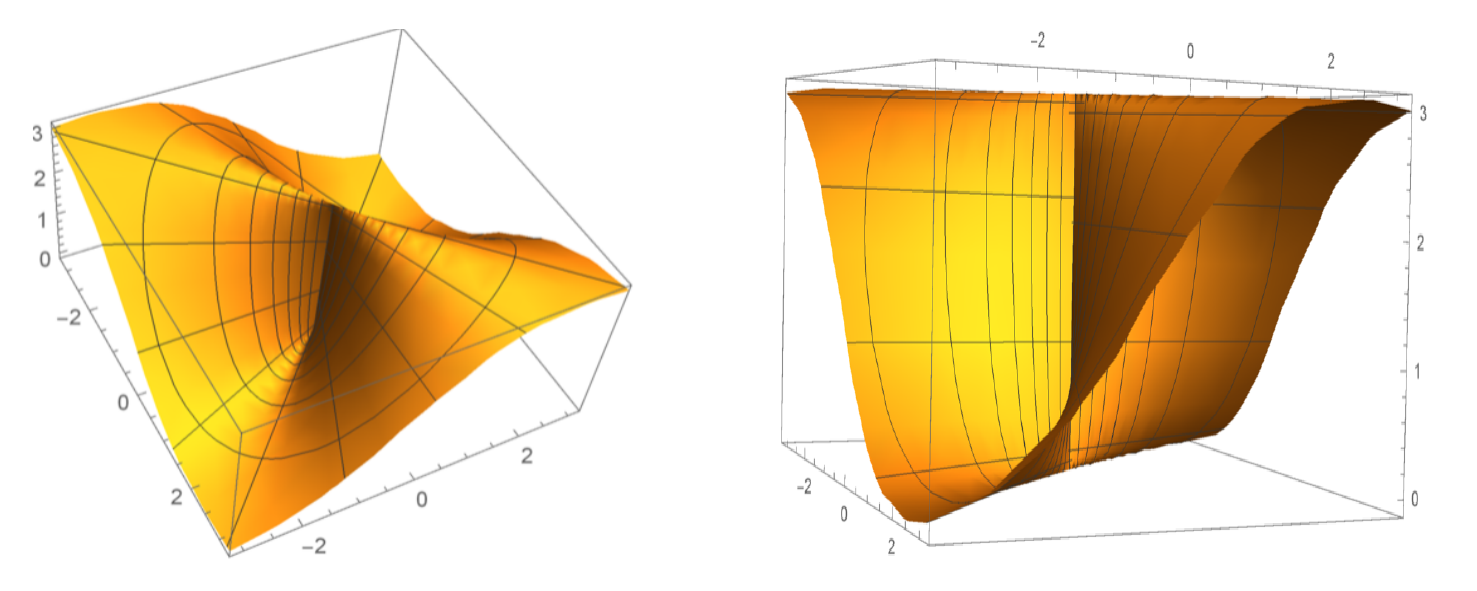}
\caption{Different views of $\Delta_p$ in Example \ref{ex3var5} (ii).}
\label{Fig3varR5ii}
\end{figure}

\begin{figure}[ht]
\centering
\includegraphics[height=3.5cm,width=10cm]{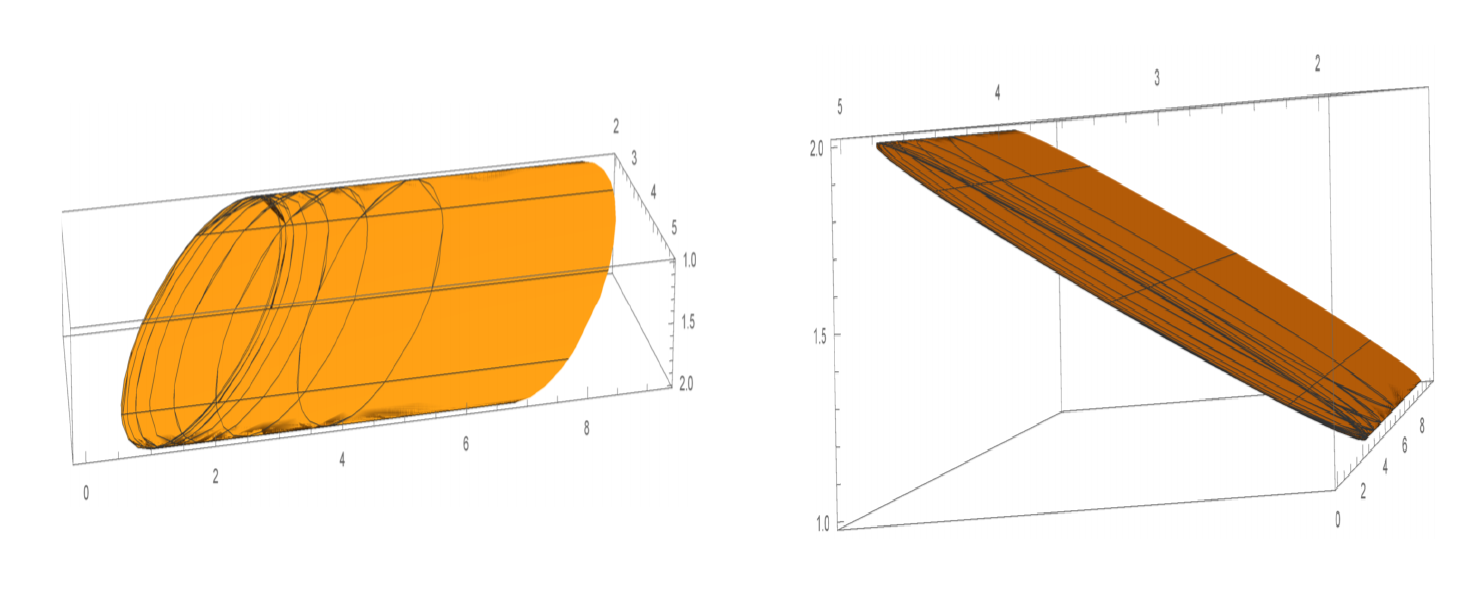}
\caption{Different views of $\Delta_p$ in Example \ref{ex3var5} (iii).}
\label{Fig3varR5iii}
\end{figure}







\end{ex}



\subsection{Geometrical interpretations}

\begin{teo}\label{axialprincipal}
Let $M^{3}_{\sing}\subset\mathbb R^{3+k}$ be given in Monge form (\ref{Monge3man}). 
Consider the regular surface $M^{2}_{\reg}\subset\mathbb R^{3+k}$ given by $f(x,y,0)$. If $II(u_{\infty},u_{\infty})=|| {\bf a_{002}}||\neq 0$, then the primary axial curvatures at $p\in M^{3}_{\sing}$ coincide with the $v_a^1$-principal curvatures at $p\in M^2_{\reg}$.
\end{teo}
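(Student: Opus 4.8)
The plan is to reduce to the normal form of Lemma \ref{change} and then exploit that the primary axial curvatures are intrinsic invariants of $M^3_{\sing}$ (they are critical values of the projection of the coordinate-independent locus $\Delta_p$ onto the well-defined direction $v_a^1$), so they may be computed in any convenient parametrisation. Since $\|{\bf a_{002}}\|\neq 0$, I would first invoke Lemma \ref{change} to assume $f$ is given by (\ref{Monge4})--(\ref{Monge5}). In this form the $z^2$-term appears only in the last coordinate and the mixed terms $xz$, $yz$ have been eliminated from it, so $II(u_\infty,u_\infty)={\bf \bar a_{002}}$ points in the last coordinate direction and, by Definition \ref{primaxialvect}, $v_a^1=(0,\ldots,0,1)=\nu_{k+1}$.

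Next I would write out the primary normal curvature function explicitly. For $w=(\alpha,\beta,\gamma)\in C_q$ (so $\alpha^2+\beta^2=1$ and $\gamma\in\mathbb R$) the normal form gives
$$
K_{v_a^1}(w)=\langle II(w,w),\nu_{k+1}\rangle=\bar a^{k+1}_{200}\alpha^2+2\bar a^{k+1}_{110}\alpha\beta+\bar a^{k+1}_{020}\beta^2+\gamma^2.
$$
The decisive observation is that $\gamma$ is a free (unconstrained) variable entering $K_{v_a^1}$ only through the term $+\gamma^2$; Lagrange multipliers for the single constraint $\alpha^2+\beta^2=1$ therefore force $\partial K_{v_a^1}/\partial\gamma=2\gamma=0$ at every critical point. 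Hence all critical points of $K_{v_a^1}$ on the cylinder $C_q$ lie in the plane $\{\gamma=0\}=u_\infty^\perp$, and the primary axial curvatures equal the critical values on the unit circle of the restricted quadratic form $\bar a^{k+1}_{200}\alpha^2+2\bar a^{k+1}_{110}\alpha\beta+\bar a^{k+1}_{020}\beta^2$, that is, the two eigenvalues of the symmetric matrix $\left(\begin{smallmatrix}\bar a^{k+1}_{200}&\bar a^{k+1}_{110}\\ \bar a^{k+1}_{110}&\bar a^{k+1}_{020}\end{smallmatrix}\right)$.

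Finally I would identify these eigenvalues with the $v_a^1$-principal curvatures of $M^2_{\reg}=f(x,y,0)$. Because the parametrisation is still in Monge form, the first fundamental form of $M^2_{\reg}$ is the identity at $q$, so $C_q\cap u_\infty^\perp$ is precisely its set of unit tangent vectors, and the shape operator $A_{v_a^1}$ (defined by $\langle A_{v_a^1}(w),w\rangle=\langle II(w,w),v_a^1\rangle$ on $u_\infty^\perp$, using that $N_pM^2_{\reg}=N_pM^3_{\sing}$) is represented by the very same matrix; its eigenvalues are by definition the $v_a^1$-principal curvatures. Comparing the two computations yields the claim. The point requiring care, and the main obstacle, is that the regular surface must be read off from the normalised parametrisation of Lemma \ref{change} rather than from the original Monge form: the source change in $z$ used there tilts the slice $\{z=0\}$, and it is exactly this adjustment (equivalently, the resulting vanishing of $\langle{\bf a_{101}},v_a^1\rangle$ and $\langle{\bf a_{011}},v_a^1\rangle$) that removes the $\alpha\gamma$ and $\beta\gamma$ terms and thereby places the critical locus of $K_{v_a^1}$ on the tangent plane of $M^2_{\reg}$. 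Once this reduction is in force the remaining steps are routine linear algebra (Rayleigh quotient / eigenvalue extraction for a $2\times 2$ symmetric form).
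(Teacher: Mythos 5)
Your proposal is correct and follows essentially the same route as the paper: reduce to the normal form of Lemma \ref{change} so that $v_a^1=(0,\ldots,0,1)$ and $K_{v_a^1}(w)=\bar a^{k+1}_{200}\alpha^2+2\bar a^{k+1}_{110}\alpha\beta+\bar a^{k+1}_{020}\beta^2+\gamma^2$, observe that the constraint $\alpha^2+\beta^2=1$ has no $\gamma$-component so criticality forces $\partial K_{v_a^1}/\partial\gamma=2\gamma=0$, and conclude that the critical points of $K_{v_a^1}|_{C_q}$ coincide with those of $K_{v_a^1}|_{C_q\cap u_\infty^{\perp}}$, whose critical values are the eigenvalues of $A_{v_a^1}$. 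The paper phrases the criticality condition via the $2\times 2$ minors of the gradient matrix rather than Lagrange multipliers, but the content is identical, and your closing remark about reading $M^2_{\reg}$ off the normalised parametrisation is the same point the paper handles by defining $T_qM^2_{\reg}=u_\infty^\perp=T_q\tilde M\cap\{z=0\}$ in the adapted coordinates.
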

\begin{proof}
Following the discussion at the end of Section 3, $T_qM^2_{\reg}$ is identified with $T_q\tilde M\cap \{z=0\}=u_{\infty}^{\perp}$  and the unitary tangent vectors are given by $C_q\cap u_{\infty}^{\perp}=\{(x,y)\in T_qM^2_{\reg}:x^2+y^2=1\}\equiv \mathbb{S}^1$. Therefore, the curvature ellipse of $M^{2}_{\reg}\subset\mathbb R^{3+k}$ at $p$ is contained in the curvature locus of $M^{3}_{\sing}\subset\mathbb R^{3+k}$ at $p$.

Given the primary normal curvature function $K_{v_a^1}(w)=\langle II(w,w),v_a^1 \rangle$ seen as a function from $\mathbb R^3$ to $\mathbb R$, the primary axial curvatures are given by the critical values of $K_{v_a^1}|_{C_q}$. The critical points of this function are given by the $2\times2$ minors of the following matrix
\begin{equation*}
\left(
\begin{array}{ccc}
\frac{\partial K_{v_a^1}}{\partial x} & \frac{\partial K_{v_a^1}}{\partial y} & \frac{\partial K_{v_a^1}}{\partial z} \\
2x & 2y & 0 \\
\end{array}
\right),
\end{equation*}
where in the second row we have the gradient of the equation of $C_q$, i.e. $x^2+y^2=1$. So the primary axial curvatures are given by the solutions of the system $2(\frac{\partial K_{v_a^1}}{\partial x}y-\frac{\partial K_{v_a^1}}{\partial y}x)=0, 2x\frac{\partial K_{v_a^1}}{\partial z}=0$ and $2y\frac{\partial K_{v_a^1}}{\partial z}=0$.

On the other hand, the eigenvalues of the shape operator $A_{v_a^1}:T_qM^2_{\reg}\to T_qM^2_{\reg}$ are given by the critical values of $\langle A_{v_a^1}(w),w\rangle=\langle II(w,w),v_a^1 \rangle$ where $w\in \mathbb{S}^1\subset T_qM^2_{\reg}$ are the unitary tangent vectors. So these are the critical values of $K_{v_a^1}|_{\mathbb{S}^1}=K_{v_a^1}|_{C_q\cap u_{\infty}^{\perp}}$, which are given by the determinant of
\begin{equation*}
\left(
\begin{array}{ccc}
\frac{\partial K_{v_a^1}}{\partial x} & \frac{\partial K_{v_a^1}}{\partial y} & \frac{\partial K_{v_a^1}}{\partial z} \\
2x & 2y & 0 \\
0 & 0 & 1
\end{array}
\right),
\end{equation*}
where the last row is the gradient of the equation for $C_q\cap u_{\infty}^{\perp}$, i.e. $z=0$. Therefore, the $v_a^1$-principal curvatures of $M^{2}_{\reg}\subset\mathbb R^{3+k}$ at $p$ are given by the solutions to $2(\frac{\partial K_{v_a^1}}{\partial x}y-\frac{\partial K_{v_a^1}}{\partial y}x)=0$.

Now, if $II(u_{\infty},u_{\infty})\neq 0$, then $|| {\bf a_{002}}||\neq0$, so we can use the normal form in Lemma \ref{change}. Now $v_a^1=(0,\ldots,0,1)$ and $K_{v_a^1}(w)=\bar{a}^{k+1}_{200}x^2 + 2\bar{a}^{k+1}_{110}xy + \bar{a}^{k+1}_{020}y^2 + z^2$ with $x^2+y^2=1$, so $\frac{\partial K_{v_a^1}}{\partial z}=2z$ and this is 0 if and only if $z=0$.

In conclusion, we have that the critical points of $K_{v_a^1}|_{C_q}$ coincide with the critical points of $K_{v_a^1}|_{C_q\cap u_{\infty}^{\perp}}$.
\end{proof}

\begin{teo}\label{axialprincipal2}
Let $M^{3}_{\sing}\subset\mathbb R^{3+k}$, $k=1,2$ and suppose that $f$ lies in the orbit $(x,y,z^2,0)$ or $(x,y,z^2,0,0)$ (depending on $k=1$ or $2$), then the secondary axial curvatures at $p\in M^{3}_{\sing}$ coincide with the $v_a^2$-principal curvatures at $p\in M^2_{\reg}$.
\end{teo}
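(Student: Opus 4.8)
The plan is to run the argument of Theorem \ref{axialprincipal} essentially verbatim, replacing $v_a^1$ by $v_a^2$; the one genuinely new input is identifying \emph{which} partial derivative of the normal curvature function vanishes. First I would put $f$ into the normal form adapted to the orbit. For $k=1$ (orbit $(x,y,z^2,0)$) this is the form of Lemma \ref{change} with $\rank(A)=1$, so that $\bar a^1_{101}=\bar a^1_{011}=0$ and
$$j^2f_1(q)=\tfrac12(\bar a^1_{200}x^2+2\bar a^1_{110}xy+\bar a^1_{020}y^2),\qquad j^2f_2(q)=\tfrac12(\bar a^2_{200}x^2+2\bar a^2_{110}xy+\bar a^2_{020}y^2+z^2).$$
For $k=2$ (orbit $(x,y,z^2,0,0)$) this is the normal form used to build the adapted frame in item iv) of the $\mathbb R^5$ list, where again the only $z$-term is the $z^2$ carried by $v_a^1$. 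In both cases $v_a^1$ is the direction carrying $z^2$ and $v_a^2\perp v_a^1$ is selected inside the plane of the half-strip, free of any $z$-dependence.

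The key observation is then that $K_{v_a^2}(w)=\langle II(w,w),v_a^2\rangle$ does not involve $z$ at all. Since $v_a^2$ is orthogonal to $v_a^1$, the $z^2$ term contributes nothing, and there are no mixed $xz$ or $yz$ terms in the normal form; for $k=1$ this is precisely the $\rank(A)=1$ clause of Lemma \ref{change}, and for $k=2$ it is the analogous fact that ${\bf a_{101}}$ and ${\bf a_{011}}$ are parallel to ${\bf a_{002}}$. Hence $\frac{\partial K_{v_a^2}}{\partial z}\equiv 0$.

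With this in hand the computation of Theorem \ref{axialprincipal} goes through, and indeed becomes simpler. The critical points of $K_{v_a^2}|_{C_q}$ are cut out by the $2\times 2$ minors of
$$\left(\begin{array}{ccc}\frac{\partial K_{v_a^2}}{\partial x}&\frac{\partial K_{v_a^2}}{\partial y}&\frac{\partial K_{v_a^2}}{\partial z}\\ 2x&2y&0\end{array}\right),$$
that is, by $\frac{\partial K_{v_a^2}}{\partial x}y-\frac{\partial K_{v_a^2}}{\partial y}x=0$ together with $x\frac{\partial K_{v_a^2}}{\partial z}=0$ and $y\frac{\partial K_{v_a^2}}{\partial z}=0$, the last two now holding identically. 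On the other hand the $v_a^2$-principal curvatures of $M^2_{\reg}=f(x,y,0)$ are the critical values of $K_{v_a^2}|_{C_q\cap u_\infty^{\perp}}$ and, exactly as in Theorem \ref{axialprincipal}, are governed by the single equation $\frac{\partial K_{v_a^2}}{\partial x}y-\frac{\partial K_{v_a^2}}{\partial y}x=0$ with $z=0$. Because $K_{v_a^2}$ is $z$-independent, its critical set in $C_q$ is a union of vertical lines, each meeting the circle $C_q\cap\{z=0\}$ at a point giving the same value. Thus the two sets of critical values agree, and the (at most two, by Proposition \ref{numaxial}) secondary axial curvatures equal the $v_a^2$-principal curvatures of $M^2_{\reg}$.

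The main obstacle is the second paragraph: verifying rigorously that $K_{v_a^2}$ is genuinely free of $z$. For $k=1$ this is immediate from Lemma \ref{change}. For $k=2$ one must check that the directions $v_a^2$ (and $v_a^3$) chosen inside the plane of the half-strip pick up no $z^2$, $xz$ or $yz$ contribution, which rests on $v_a^2\perp v_a^1$ and on the parallelism of ${\bf a_{101}},{\bf a_{011}}$ with ${\bf a_{002}}$. Once this is secured the argument is identical to that of Theorem \ref{axialprincipal}, the only difference being that the reduction to $\{z=0\}$ is here forced not by $\partial_z K=2z$ vanishing only on $\{z=0\}$, but by $\partial_z K$ vanishing everywhere.
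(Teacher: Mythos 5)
Your proposal is correct and follows essentially the same route as the paper: both rest on the normal form of Lemma \ref{change} with $\rank(A)=1$, which forces $K_{v_a^2}$ to be independent of $z$ (the paper phrases this as each slice $II(C_q\cap\{z=z_0\})$ being the curvature ellipse of $f(x,y,0)$ translated in the $v_a^1$-direction, you phrase it as $\partial K_{v_a^2}/\partial z\equiv 0$), and both conclude that the critical values over $C_q$ coincide with those over $C_q\cap u_\infty^{\perp}$.
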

\begin{proof}
Consider the Monge form as in Lemma \ref{change}. We prove it for $k=1$, the proof for $k=2$ is analogous. In the orbit $(x,y,z^2,0)$, the curvature locus is given by $2(a^1_{200}x^2 + a^1_{110}xy + a^1_{020}y^2,a^2_{200}x^2 + a^2_{110}xy + a^2_{020}y^2+z^2)$ where $x^2+y^2=1$. For $z=0$ we get the curvature ellipse (maybe degenerate) of the regular surface given by $f(x,y,0)$. For any other constant $z_0$, $II(C_q\cap\{z=z_0\})$ is the same curvature ellipse translated by $(z_0,0)$, i.e. a translation in the direction of the primary axial vector. Therefore the critical values of $K_{v_a^2}|_{C_q}$ coincide with the critical values of $K_{v_a^2}|_{C_q\cap u_{\infty}^{\perp}}$.
\end{proof}

\begin{rem}
When $k>2$, similarly to Proposition \ref{A2 orbits} there is an orbit of type $(x,y,z^2,0,\ldots,0)$. Here the secondary axial curvatures coincide with the $v_a^2$-principal curvatures. Notice also that in this case there may exist 3-ary axial curvatures, but the tangent space of $M^2_{\reg}$ is 2-dimensional.
\end{rem}

\begin{teo}\label{axialprincipal3}
Let $M^{3}_{\sing}\subset\mathbb R^{3+k}$, $k\geq 1$ and suppose that $f$ lies in the orbit $(x,y,0,\ldots,0)$, then the primary and secondary axial curvatures at $p\in M^{3}_{\sing}$ coincide with the $v_a^1$ and $v_a^2$-principal curvatures at $p\in M^2_{\reg}$ (resp.).
\end{teo}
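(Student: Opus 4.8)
The plan is to reduce the statement to the critical-point comparison already performed in the proofs of Theorems \ref{axialprincipal} and \ref{axialprincipal2}, using the crucial feature of the orbit $(x,y,0,\ldots,0)$: here the second fundamental form has no $z$-dependence whatsoever. First I would put $f$ in the normal form guaranteed for this orbit. By item v) of Proposition \ref{types34} (for $k=1$) and item vi) of the itemized discussion following Proposition \ref{A2 orbits5} (for $k\geq 2$), every germ in this orbit can be taken, by changes of coordinates in the source and rotations in the target, to a parametrisation whose 2-jet only involves $x^2$, $xy$ and $y^2$ in its last coordinates, i.e. $\mathbf{a_{002}}=\mathbf{a_{101}}=\mathbf{a_{011}}=0$. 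Consequently $II(w,w)$, and hence $K_{v_a^i}(w)=\langle II(w,w),v_a^i\rangle$ for \emph{every} $i$, is a function of $(x,y)$ alone, so that $\partial K_{v_a^i}/\partial z\equiv 0$.

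With this established the argument mirrors the proof of Theorem \ref{axialprincipal}. The $i$-ary axial curvatures are the critical values of $K_{v_a^i}|_{C_q}$, whose critical points are the common zeros of the $2\times 2$ minors of
\[
\left(
\begin{array}{ccc}
\frac{\partial K_{v_a^i}}{\partial x} & \frac{\partial K_{v_a^i}}{\partial y} & \frac{\partial K_{v_a^i}}{\partial z} \\
2x & 2y & 0
\end{array}
\right),
\]
namely $\frac{\partial K_{v_a^i}}{\partial x}\,y-\frac{\partial K_{v_a^i}}{\partial y}\,x=0$, together with $x\,\frac{\partial K_{v_a^i}}{\partial z}=0$ and $y\,\frac{\partial K_{v_a^i}}{\partial z}=0$. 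Since $\partial K_{v_a^i}/\partial z\equiv 0$, the latter two equations hold identically, and the critical points reduce to the solutions of $\frac{\partial K_{v_a^i}}{\partial x}\,y-\frac{\partial K_{v_a^i}}{\partial y}\,x=0$.

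On the other hand, as in Theorem \ref{axialprincipal}, the $v_a^i$-principal curvatures of $M^2_{\reg}$ are the critical values of $K_{v_a^i}|_{C_q\cap u_\infty^{\perp}}$, with $u_\infty^{\perp}=\{z=0\}$; these are the zeros of the determinant obtained by adjoining the row $(0,0,1)$ (the gradient of $z=0$), which again gives precisely $\frac{\partial K_{v_a^i}}{\partial x}\,y-\frac{\partial K_{v_a^i}}{\partial y}\,x=0$. Because $K_{v_a^i}$ is independent of $z$, its restriction to $\{z=0\}$ is the same function of $(x,y)$, so both the critical points and the associated critical values coincide. Carrying this out for $i=1$ and $i=2$ — noting that the adapted frame $\{v_a^1,v_a^2\}$ is built from the semi-major and semi-minor directions of the curvature ellipse, which is literally the curvature locus of $M^2_{\reg}$ — yields the two claimed equalities.

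The step I expect to require the most care is the very first one: verifying that the orbit hypothesis really forces the full $z$-independence of the entire $II$ (and not merely of a single projection $K_{v_a^1}$), and checking that the reductions used in Proposition \ref{types34} and in the frame discussion preserve the adapted frame, so that $v_a^1$ and $v_a^2$ are genuinely the semi-major and semi-minor axes of the ellipse rather than some rotated pair. Once $\partial K_{v_a^i}/\partial z\equiv 0$ is secured for both axial vectors, the remainder is the identical linear-algebra comparison of critical-point equations already used in Theorems \ref{axialprincipal} and \ref{axialprincipal2}.
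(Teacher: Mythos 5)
Your proposal is correct and follows essentially the same route as the paper: the paper's proof simply observes that in the orbit $(x,y,0,\ldots,0)$ (equivalently $\rank(A)=0$, i.e. ${\bf a_{101}}={\bf a_{011}}={\bf a_{002}}=0$) the curvature locus of $M^3_{\sing}$ coincides with that of $M^2_{\reg}$, so the critical values of any projection agree. Your version just makes explicit the $z$-independence of $II$ and the resulting collapse of the critical-point equations, and your concern about the choice of frame is answered by the same coincidence of loci (as the paper notes, any adapted frame would work here).
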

\begin{proof}
In this orbit the curvature locus of the 3-manifold is precisely the curvature locus of the regular manifold so the result follows. In this case, with any other choice of adapted frame the result would still hold.
\end{proof}

\begin{rem}
Similar results to Theorems \ref{axialprincipal}, \ref{axialprincipal2} and \ref{axialprincipal3} can be proven for singular corank 1 $n$-manifolds in general. In fact, we believe that when $i\leq n-1$, then, in the orbits where $n-1$ $i$-ary axial curvatures are finite they coincide with the $v_a^i$-principal curvatures of the associated regular $(n-1)$-manifold. However, our proofs depend on the type of curvature locus and certain normal forms, so we do not have a general proof at the moment.
\end{rem}

\begin{ex}
Consider $f(x,y,z)=(x,y,\frac{3}{2}x^2+xy+\frac{1}{2}y^2+\frac{1}{2}yz,x^2+\frac{5}{2}y^2+\frac{1}{2}xz)$, which lies in the orbit $(x,y,xz,yz)$. Here the curvature locus is the whole plane and we can choose the adapted frame given by $v_a^1=(1,0)$ and $v_a^2=(0,1)$. One can check that $K_{v_a^1}(\theta,\phi)$ has a critical point with critical value $\kappa_{a_1}=3$. However, the $v_a^1$-principal curvatures of $f(x,y,0)$ are given by $2\pm \sqrt{2}$ and do not coincide with $\kappa_{a_1}$.

It is possible to find an adapted frame such that at least one of the principal curvatures in the direction of one of the vectors of the frame of the regular manifold coincides with an axial curvature, but it seems unlikely to be able to obtain a general result as Theorem \ref{axialprincipal}. However, we have the following partial result.
\end{ex}

\begin{prop}\label{axialunbound}
Let $M^3_{\sing}\subset\mathbb R^{3+k}$, $k=1,2$, if the curvature locus is unbounded on both sides in the direction of a certain axial vector $v_a^i$, then there is exactly 1 axial curvature in that direction. In particular, if there is exactly 1 direction $v_a^i$ in which the curvature locus is unbounded on both sides and $f$ is given in Monge form such that $v_a^i$ is one of the coordinate axes, then the corresponding component can be taken to $$j^2f_i(x,y,z)=\frac12({a}^i_{200}x^2 + 2{a}^i_{110}xy + {a}^i_{020}y^2 + 2{a}^i_{101}xz),$$ and the unique axial curvature in that direction is given by $\kappa_{a_i}={a}^{i}_{020}$. Furthermore, in this case, if ${a}^{i}_{110}=0$ then it coincides with 1 $v_a^i$-principal curvature of the associated regular surface.
\end{prop}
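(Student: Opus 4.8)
The plan is to reduce the statement to an explicit computation of the $i$-ary normal curvature function restricted to the cylinder $C_q$, after placing $v_a^i$ along a coordinate axis. Since $K_{v_a^i}(w)=\langle II(w,w),v_a^i\rangle$ depends only on the second fundamental form and on the direction $v_a^i$, it is unchanged by source reparametrisations and by target rotations that fix $v_a^i$. I would therefore first rotate the target so that $v_a^i$ is one of the last $k+1$ coordinate axes, keep the Monge form (\ref{Monge3man}), and write $K_{v_a^i}(x,y,z)=a^i_{200}x^2+2a^i_{110}xy+a^i_{020}y^2+a^i_{002}z^2+2a^i_{101}xz+2a^i_{011}yz$ on $C_q=\{x^2+y^2=1\}$.

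The first step is to convert the hypothesis into conditions on the coefficients. Completing the square in $z$ shows that if $a^i_{002}\neq 0$ then for each fixed $(x,y)$ the restriction is a parabola in $z$ whose image is bounded on one side, so the whole image over the compact circle is bounded on one side; hence unboundedness on both sides forces $a^i_{002}=0$. With $a^i_{002}=0$ the restriction is affine in $z$ with slope $2(a^i_{101}x+a^i_{011}y)$, and it is unbounded above and below precisely when $(a^i_{101},a^i_{011})\neq 0$. A rotation of $(x,y)$ in the source preserves $C_q$, fixes the target (hence $v_a^i$) and does not create a $z^2$-term; choosing its angle to annihilate the $yz$-coefficient yields the asserted normal form $j^2f_i(x,y,z)=\frac12(a^i_{200}x^2+2a^i_{110}xy+a^i_{020}y^2+2a^i_{101}xz)$ with $a^i_{101}\neq 0$.

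Next I would find the critical points of $K_{v_a^i}|_{C_q}$ via the minors criterion of Proposition \ref{numaxial} and Theorem \ref{axialprincipal}. Since $\partial K_{v_a^i}/\partial z=2a^i_{101}x$ with $a^i_{101}\neq 0$, the two minors containing this entry give $x^2=0$ and $xy=0$, so $x=0$; on $C_q$ this leaves $y=\pm 1$, and the remaining minor then fixes $z=-a^i_{110}y/a^i_{101}$. Thus there are exactly two antipodal critical points, at both of which $K_{v_a^i}=a^i_{020}y^2=a^i_{020}$. This proves that there is a single finite axial curvature in the direction $v_a^i$, equal to $\kappa_{a_i}=a^i_{020}$, establishing the first two assertions; the same analysis already gives the general first sentence, since ``unbounded on both sides'' is exactly the condition $a^i_{002}=0$, $(a^i_{101},a^i_{011})\neq 0$ in this frame.

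For the last assertion, the associated regular surface $M^2_{\reg}=f(x,y,0)$ has $f_x(0)$ and $f_y(0)$ equal to the first two coordinate vectors, so its first fundamental form at $p$ is the identity and the matrix of the shape operator $A_{v_a^i}$ on $T_qM^2_{\reg}$ coincides with the coefficient matrix $\left(\begin{smallmatrix}a^i_{200}&a^i_{110}\\ a^i_{110}&a^i_{020}\end{smallmatrix}\right)$ of the second fundamental form $II_{v_a^i}|_{z=0}$. When $a^i_{110}=0$ this matrix is diagonal with eigenvalues $a^i_{200}$ and $a^i_{020}$, so $\kappa_{a_i}=a^i_{020}$ is one of the two $v_a^i$-principal curvatures. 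The only points needing care are the legitimacy of the source rotation (a source change leaving $v_a^i$ a coordinate axis and introducing no $z^2$-term) and the correct translation of the unboundedness hypothesis into the coefficient conditions; the remaining computations are direct.
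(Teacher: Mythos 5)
Your proof is correct, and it reaches the same core computation as the paper (reduce to a normal form in which the $v_a^i$-component has only an $xz$ cross term with $z$, then observe that the $z$-derivative of $K_{v_a^i}$ forces $x=0$ on the critical set, so the only critical value is $a^i_{020}$), but it is organised along a genuinely different route. The paper first runs through the $\mathscr A^2$-orbit classification, splits into the case of a single unbounded direction versus a plane of unbounded directions, imports the normal form from Proposition 4.13 of \cite{BenediniRuasSacramento}, and computes critical points in the angular parametrisation $(\theta,\phi)$ with $\gamma=\cot\phi$; the second case requires a separate argument with $v_a^i=(\alpha,\beta)$ and an explicit check that the two critical lines are antipodal. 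You instead translate the hypothesis ``unbounded on both sides'' directly into the coefficient conditions $a^i_{002}=0$, $(a^i_{101},a^i_{011})\neq(0,0)$, kill $a^i_{011}$ by a source rotation, and apply the minors criterion; since the target rotation putting $v_a^i$ on a coordinate axis is available for an arbitrary $v_a^i$, this handles both of the paper's cases uniformly and makes the proof self-contained (no appeal to the orbit list or to the external normal-form result). What the paper's route buys is the explicit link to the orbits, which is used elsewhere in the section; what yours buys is brevity and independence from the classification. The one point to tighten is the phrase that a source rotation of $(x,y)$ ``fixes the target'': to preserve the Monge form you must compose with the matching rotation of the first two target coordinates (which acts trivially on $N_pM$ and hence on $v_a^i$), or else simply note that the critical values of $K_{v_a^i}|_{C_q}$ are invariant under source rotations preserving $C_q$, so the Monge form need not be maintained at that step.
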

\begin{proof}
First suppose there is a unique direction corresponding to $v_a^i$ such that the curvature locus is unbounded on both sides. This is the case of the orbits $(x,y,z^2,xz)$ and $(x,y,xz,0)$ when $k=1$ and the orbits $(x,y,z^2,xz,0)$ and $(x,y,xz,0,0)$ when $k=2$. By Proposition 4.13 in \cite{BenediniRuasSacramento}, for $k=2$ the component corresponding to $v_a^i$ can be taken, by changes of variable in the source and isometries in the target, to $$j^2f_i(x,y,z)=\frac12({a}^i_{200}x^2 + 2{a}^i_{110}xy + {a}^i_{020}y^2 + 2{a}^i_{101}xz).$$ The proof of Proposition 4.13 is valid for the two orbits in $k=1$ too. The normal curvature function in the unbounded direction is given by
$$
K_{v_{a}^i} (w) =\langle \eta(w), v_a^i\rangle={a}^{i}_{200}\cos^2\theta + 2{a}^{i}_{110}\cos\theta\sin\theta + {a}^{i}_{020}\sin^2\theta + \cos\theta\frac{\cos\phi}{\sin\phi}.
$$ 
Taking the partial with respect to $\phi$ equal to 0 we get $\cos\theta=0$ and so $\sin\theta=\pm 1$. Therefore the unique critical value is $\kappa_{a_i}={a}^{i}_{020}$. 

On the other hand, if ${a}^{i}_{110}=0$ the end points of the projection of the curvature ellipse of the associated regular surface in the direction $v_a^i$ (i.e. the $v_a^i$-principal curvatures) are given by ${a}^{i}_{200}$ and ${a}^{i}_{020}$.

Now suppose that there are more than one axial vectors in which the curvature locus is unbounded on both sides. This is the case of the orbits $(x,y,xz,yz)$ when $k=1$ or $(x,y,xz,yz,z^2)$ and $(x,y,xz,yz,0)$ when $k=2$. In the plane in which the curvature locus is unbounded we can chose any orthonormal frame to be part of the adapted frame. Take a vector $v_a^i=(\alpha,\beta)$ (if $k=1$, $(\alpha,\beta,0)$ if $k=2$) and, by Proposition 4.13 in \cite{BenediniRuasSacramento} (the proof is also valid for the orbit $(x,y,xz,yz)$), we can consider the normal curvature function
$
K_{v_{a}^i} (w) =\langle \eta(w), v_a^i\rangle=
$
$$
=\alpha({a}^{i}_{200}\cos^2\theta + 2{a}^{i}_{110}\cos\theta\sin\theta + {a}^{i}_{020}\sin^2\theta + \cos\theta\frac{\cos\phi}{\sin\phi}+2a_{011}^i\sin\theta\frac{\cos\phi}{\sin\phi})+$$
$$
+\beta({a}^{i+1}_{200}\cos^2\theta + 2{a}^{i+1}_{110}\cos\theta\sin\theta + {a}^{i+1}_{020}\sin^2\theta + 2a_{011}^{i+1}\sin\theta\frac{\cos\phi}{\sin\phi}).
$$
The partial derivative with respect to $\phi$ is 0 if and only if $\alpha\cos\theta+2(\alpha a_{011}^i+\beta a_{011}^{i+1})\sin\theta=\langle(\alpha,2(\alpha a_{011}^i+\beta a_{011}^{i+1})),(\cos\theta,\sin\theta)\rangle=0$, so there are two values of $\theta$ for which we may have critical points. Substituting $\alpha\cos\theta+2(\alpha a_{011}^i+\beta a_{011}^{i+1})\sin\theta=0$ in $K_{v_{a}^i} (w)$ we can see that the critical value does not depend on $\phi$. The critical points are two lines of antipodal points in the cylinder $C_q$. Since the second fundamental form is quadratic homogeneous, the image of antipodal points is the same, so the image of the two lines is the same and there is only 1 critical value.
\end{proof}

\begin{coro}
For $M^{3}_{\sing}\subset\mathbb R^{3+k}$ there are at least 2 and at most 4 axial curvatures when $k=1$ and at least 4 and at most 5 axial curvatures when $k=2$, which may coincide in degenerate cases.
\end{coro}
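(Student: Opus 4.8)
The plan is to prove the bounds by running through the $\mathscr A^2$-classification of the $2$-jet and counting the finite axial curvatures orbit by orbit, treating $k=1$ and $k=2$ separately. Since $n=3$ we have $n-1=2$, so Proposition~\ref{numaxial} already gives the crude bound of at most $l(n-1)$ curvatures ($4$ when $k=1$, $6$ when $k=2$); the real work is to sharpen this to the exact range in each orbit and, for $k=2$, to see where the crude bound of $6$ drops to $5$.

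The count in each axial direction $v_a^i$ rests on three observations. First, if $\Delta_p$ is unbounded on both sides in the direction $v_a^i$, then by Proposition~\ref{axialunbound} there is exactly one axial curvature in that direction. Second, if the projection $K_{v_a^i}|_{C_q}$ reduces (after the critical condition forces $z=0$, as for a $z^2$-term, or because the component carries no $z$ at all) to a non-constant quadratic form on the circle $x^2+y^2=1$, then it has exactly two critical values, which are the two $v_a^i$-principal curvatures of the regular surface $M^2_{\reg}=f(x,y,0)$ in the sense of Theorems~\ref{axialprincipal}, \ref{axialprincipal2} and \ref{axialprincipal3}. Third, if $v_a^i$ is orthogonal to the affine hull $Aff_p$ --- which happens precisely for the extra vector $v_a^3$ when $k=2$ and $Aff_p$ is a plane --- then $K_{v_a^i}$ is constant on $C_q$ and contributes a single axial curvature.

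With these rules in hand I would read off the counts from the locus shapes established earlier. For $k=1$ ($l=2$), using the five orbits of Proposition~\ref{A2 orbits} and the shapes of Proposition~\ref{types34}: the planar region $(x,y,xz,z^2)$ gives $2+1=3$, the plane $(x,y,xz,yz)$ gives $1+1=2$, the half-strip $(x,y,z^2,0)$ gives $2+2=4$, the strip $(x,y,xz,0)$ gives $1+2=3$, and the regular-type locus $(x,y,0,0)$ gives $2+2=4$; hence the number lies between $2$ and $4$. For $k=2$ ($l=3$), using the six orbits of Proposition~\ref{A2 orbits5} together with the loci and adapted frames described just before it: $(x,y,xz,yz,z^2)$ gives $2+1+1=4$, $(x,y,z^2,yz,0)$ gives $2+1+2=5$, $(x,y,xz,yz,0)$ gives $1+1+2=4$, $(x,y,z^2,0,0)$ gives $2+2+1=5$, $(x,y,xz,0,0)$ gives $1+2+1=4$, and $(x,y,0,0,0)$ gives $2+2+1=5$; hence the number lies between $4$ and $5$. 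The normal forms of Lemma~\ref{change} (and of Proposition~4.13 of \cite{BenediniRuasSacramento}) are what make each of these individual projections explicit.

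The hard part will be the bookkeeping rather than any single computation: for each orbit one must correctly decide which axial direction is unbounded on both sides (weight $1$), which gives a genuine non-constant quadratic (weight $2$), and which is orthogonal to $Aff_p$ (weight $1$). This is delicate exactly in the borderline orbits such as $(x,y,z^2,0,0)$, where one must use the fact that the locus sits in a plane so that $v_a^3$ contributes only $1$ and the total stays at $5$ rather than $6$. Finally I would address the degenerate sub-cases: when $\Delta_p$ collapses (a parabolic region to a half-line, a strip to a line, an ellipse to a segment or a point) a bounded direction may become constant or two principal curvatures may merge, so some of the counted curvatures coincide; one checks that these coincidences only decrease the counts and never push them outside the stated ranges, which is exactly the clause \emph{``which may coincide in degenerate cases''}.
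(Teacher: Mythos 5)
Your proposal is correct and follows essentially the same route as the paper: both rest on the crude bound from Proposition~\ref{numaxial}, the $\mathscr{A}^2$-orbit classifications, Proposition~\ref{axialunbound} for directions in which $\Delta_p$ is unbounded on both sides, and Theorems~\ref{axialprincipal}--\ref{axialprincipal3} (plus the orthogonality of $v_a^3$ to the plane of the locus) for the bounded directions. The only difference is presentational — you tally every orbit exhaustively, whereas the paper singles out the orbits realising the extremal counts — and your per-orbit numbers agree with what the paper's argument yields.
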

\begin{proof}
From Proposition \ref{numaxial}, $l(n-1)$ is a higher bound for the number of axial curvatures. 

For $k=1$, $l=2$ and so there are at most 4 axial curvatures. This higher bound is attained in orbits such as $(x,y,z^2,0)$ or $(x,y,0,0)$, where the axial curvatures coincide with principal curvatures of the associated regular surfaces (by Theorems \ref{axialprincipal}, \ref{axialprincipal2}, and \ref{axialprincipal3}). On the other hand, there are at most 2 directions in which the curvature locus is unbounded on both sides (i.e. in the orbit $(x,y,xz,yz)$) and by Proposition \ref{axialunbound}, there will be exactly 1 axial curvature in each.

For $k=2$, $l=3$. However, the higher bound 6 is not attained since by the way the adapted frame is chosen, whenever there are 2 primary and 2 secondary axial curvatures, there is only 1 3-ary axial curvature ($v_a^3$ is perpendicular to the plane that contains the curvature locus, and so the projection to this direction gives only 1 value). On the other hand, there are at most two directions in which the curvature locus is unbounded on both sides of the direction of the axial vector (in the orbits $(x,y,xz,yz,z^2)$ and $(x,y,xz,yz,0)$) by Proposition \ref{axialunbound} there is only 1 axial curvature in each of these directions. In the remaining direction there will be 2 axial curvatures.

In degenerate cases, when the curvature locus is a segment, for example, two $i$-ary curvatures might coincide. In this case, there might be only 3 different axial curvatures when $k=2$.
\end{proof}

As corollaries of Theorems \ref{axialprincipal}, \ref{axialprincipal2}, \ref{axialprincipal3} and Proposition \ref{axialunbound} we get some interesting geometrical interpretations.

\begin{coro}\label{GaussianCurva}
Let $M^{3}_{\sing}\subset\mathbb R^{4}$ and suppose that $f$ lies in the orbits $(x,y,z^2,0)$ or $(x,y,0,0)$, the Gaussian curvature of the regular surface given by $f(x,y,0)$ is given by 
$$K=\kappa_{a_1}^1\kappa_{a_1}^2+\kappa_{a_2}^1\kappa_{a_2}^2.$$
In particular, this includes when $f$ is a frontal and  $f(x,y,0)$ is a cuspidal surface or a cuspidal surface with a curve of cuspidal cross-caps.
\end{coro}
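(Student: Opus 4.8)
The plan is to reduce the statement to the Gauss equation for surfaces in $\mathbb R^4$ and then substitute the identifications of axial curvatures with principal curvatures furnished by Theorems \ref{axialprincipal}, \ref{axialprincipal2} and \ref{axialprincipal3}.

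First I would observe that, in both orbits, the associated regular surface $M^2_{\reg}$ given by $f(x,y,0)$ is a genuine regular surface in $\mathbb R^4$ whose two-dimensional normal space coincides with the axial space $Ax_p$, so that $\{v_a^1,v_a^2\}$ is an orthonormal frame of $N_pM^2_{\reg}$. I would then write the Gauss equation in the intrinsic form $K=\langle II(e_1,e_1),II(e_2,e_2)\rangle-\|II(e_1,e_2)\|^2$ for an orthonormal tangent basis $\{e_1,e_2\}$, and expand each $II(e_i,e_j)=\sum_{\alpha}\langle II(e_i,e_j),v_a^\alpha\rangle v_a^\alpha$. Since the $v_a^\alpha$ are orthonormal and $\langle II(e_i,e_j),v_a^\alpha\rangle=\langle A_{v_a^\alpha}(e_i),e_j\rangle$, this collapses to
$$
K=\det A_{v_a^1}+\det A_{v_a^2},
$$
where $A_{v_a^\alpha}$ is the shape operator along $v_a^\alpha$ and $\det A_{v_a^\alpha}$, being the determinant of a symmetric $2\times 2$ matrix in an orthonormal basis, is the product of its eigenvalues, i.e.\ of the two $v_a^\alpha$-principal curvatures of $M^2_{\reg}$.

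Next I would invoke the identifications established earlier. For the orbit $(x,y,z^2,0)$ we have $\|{\bf a_{002}}\|\neq 0$, so Theorem \ref{axialprincipal} identifies the $v_a^1$-principal curvatures with $\kappa_{a_1}^1,\kappa_{a_1}^2$, giving $\det A_{v_a^1}=\kappa_{a_1}^1\kappa_{a_1}^2$, while Theorem \ref{axialprincipal2} identifies the $v_a^2$-principal curvatures with $\kappa_{a_2}^1,\kappa_{a_2}^2$, giving $\det A_{v_a^2}=\kappa_{a_2}^1\kappa_{a_2}^2$. For the orbit $(x,y,0,0)$ both identifications follow at once from Theorem \ref{axialprincipal3}. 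Substituting into the displayed Gauss equation yields $K=\kappa_{a_1}^1\kappa_{a_1}^2+\kappa_{a_2}^1\kappa_{a_2}^2$. For the ``in particular'' clause I would note that a frontal $f$ whose restriction $f(x,y,0)$ is a cuspidal surface, possibly carrying a curve of cuspidal cross-caps, has its $2$-jet in one of these two orbits, so the formula applies.

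The step I expect to be the main (though modest) obstacle is bookkeeping rather than geometry: one must check that all four axial curvatures are genuinely finite in these orbits, so that the two products on the right-hand side are well defined and the matching with the $\det A_{v_a^\alpha}$ is complete. This is precisely where the preceding corollary is needed, as it shows the bound $l(n-1)=4$ is attained exactly in the orbits $(x,y,z^2,0)$ and $(x,y,0,0)$. The only truly geometric ingredient is the Gauss equation; once it is put in the form $K=\det A_{v_a^1}+\det A_{v_a^2}$, the result is immediate from the three theorems.
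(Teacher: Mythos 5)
Your proof is correct and follows essentially the same route as the paper: decompose $K$ as $\det A_{v_a^1}+\det A_{v_a^2}$ with respect to the adapted normal frame and then substitute the identifications from Theorems \ref{axialprincipal}, \ref{axialprincipal2} and \ref{axialprincipal3}. The only difference is that you derive the decomposition directly from the Gauss equation, whereas the paper cites it as Theorem 1 of \cite{BastoGoncalves} (Theorem 7.1 in \cite{Livro}); the content is identical.
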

\begin{proof}
Given a regular surface in $\mathbb R^4$ and $\{v_1,v_2\}$ an orthonormal basis of the normal plane, by Theorem 1 in \cite{BastoGoncalves} (which can be found as Theorem 7.1 in \cite{Livro}), the Gaussian curvature is $K=K_1+K_2$, where $K_i$ is the Gaussian curvature of the regular surface in $\mathbb R^3$ given by the projection in the normal direction orthogonal to $v_i$. Given the adapted frame $\{v_a^1,v_a^2\}$ of $N_pM$, then $K_i$ is the product of the $v_a^i$-principal curvatures, $i=1,2$. Therefore, by Theorems \ref{axialprincipal}, \ref{axialprincipal2} and \ref{axialprincipal3}, we get $K=\kappa_{a_1}^1\kappa_{a_1}^2+\kappa_{a_2}^1\kappa_{a_2}^2.$
\end{proof}


\begin{coro}
Let $M^{3}_{\sing}\subset\mathbb R^{4}$ and suppose that $f$ is such that the normal curvature tensor $R_D(p)=0$, then the curvature locus of the associated $M^2_{\reg}$ is a segment with extremal points $P_1=(\kappa_{a_1}^1,\kappa_{a_2}^1)$ and $P_2=(\kappa_{a_1}^2,\kappa_{a_2}^2)$.
\end{coro}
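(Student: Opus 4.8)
The plan is to work with the associated regular surface $M^2_{\reg}\subset\mathbb R^4$ given by $f(x,y,0)$ and to exploit the fact, already recorded in the proof of Theorem \ref{axialprincipal}, that its curvature ellipse is the image of the unit circle in $T_qM^2_{\reg}$ under $\eta(w)=II(w,w)$ and lies in the plane $Ax_p$ spanned by the adapted frame $\{v_a^1,v_a^2\}$. Identifying $Ax_p$ with $\mathbb R^2$ via this frame, the two points $P_1,P_2$ are the coordinate pairs appearing in the statement. The argument hinges on the classical interpretation of the normal curvature tensor of a surface in $\mathbb R^4$ (see \cite{Little} and \cite[Ch.~7]{Livro}): $R_D(p)=0$ if and only if the shape operators $A_{v_a^1},A_{v_a^2}\colon T_qM^2_{\reg}\to T_qM^2_{\reg}$ commute, which is in turn equivalent to the curvature ellipse degenerating to a (possibly trivial) segment.

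First I would use $R_D(p)=0$ to simultaneously diagonalise these two symmetric operators: since they commute there is an orthonormal basis $\{e_1,e_2\}$ of $T_qM^2_{\reg}$ with $A_{v_a^1}e_i=\kappa_{a_1}^i\,e_i$ and $A_{v_a^2}e_i=\kappa_{a_2}^i\,e_i$ for $i=1,2$. By Theorems \ref{axialprincipal}, \ref{axialprincipal2} and \ref{axialprincipal3} (applied in the relevant orbits, as in Corollary \ref{GaussianCurva}) these eigenvalues are precisely the primary and secondary axial curvatures of $M^3_{\sing}$. The key point is that the \emph{single} orthonormal frame $\{e_1,e_2\}$ diagonalises both operators at once, so the four axial curvatures acquire a canonical pairing indexed by the common eigenvectors.

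Next I would read off the ellipse in this basis. Writing $w=\cos\theta\,e_1+\sin\theta\,e_2$ and expanding $\eta(w)$ in the frame $\{v_a^1,v_a^2\}$ yields
\begin{equation*}
\eta(\theta)=H+\cos 2\theta\,B+\sin 2\theta\,C,
\end{equation*}
with $H=\tfrac12(\kappa_{a_1}^1+\kappa_{a_1}^2,\ \kappa_{a_2}^1+\kappa_{a_2}^2)$, $B=\tfrac12(\kappa_{a_1}^1-\kappa_{a_1}^2,\ \kappa_{a_2}^1-\kappa_{a_2}^2)$ and $C=(II_{v_a^1}(e_1,e_2),\,II_{v_a^2}(e_1,e_2))$. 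In a common eigenbasis the off-diagonal coefficients $II_{v_a^i}(e_1,e_2)$ vanish, so $C=0$ and $\eta(\theta)=H+\cos 2\theta\,B$ traces the segment along $B$ centred at $H$. Its extremal points, attained at $\cos 2\theta=\pm1$, are $\eta(e_1)=II(e_1,e_1)=(\kappa_{a_1}^1,\kappa_{a_2}^1)=P_1$ and $\eta(e_2)=II(e_2,e_2)=(\kappa_{a_1}^2,\kappa_{a_2}^2)=P_2$, which is exactly the claim.

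I expect the main (and essentially only) difficulty to be isolating the correct pairing of the four axial curvatures into the two endpoints. A priori the primary axial curvatures are the extreme values of the projection of the curvature locus onto $v_a^1$ and the secondary ones the extreme values of the projection onto $v_a^2$, and nothing forces these extremes to be attained at the same tangent directions. The hypothesis $R_D(p)=0$ is exactly what removes this ambiguity, since it produces the common eigenbasis and hence guarantees that the $v_a^1$- and $v_a^2$-extremes are reached at the two diametrically opposite directions $e_1,e_2$; this is what legitimises the pairing $(\kappa_{a_1}^1,\kappa_{a_2}^1)$ and $(\kappa_{a_1}^2,\kappa_{a_2}^2)$ in $P_1,P_2$. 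One should also note the degenerate subcase $B=0$, where the axial curvatures coincide and the ``segment'' collapses to the single point $P_1=P_2$; this remains consistent with the statement if a point is read as a degenerate segment.
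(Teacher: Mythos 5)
Your proof is correct and follows the same skeleton as the paper's: use $R_D(p)=0$ to obtain a common orthonormal eigenbasis $\{e_1,e_2\}$ of all shape operators, identify the endpoints of the degenerate curvature locus with the eigenvalue pairs $(\lambda_i,\mu_i)$ attached to $e_i$, and then invoke Theorems \ref{axialprincipal} and \ref{axialprincipal2} to read these eigenvalues as the axial curvatures of $M^3_{\sing}$. The one genuine difference is in the middle step: the paper simply cites the result of Nu\~no-Ballesteros and Romero Fuster in \cite{NunoRomero} that for a regular $n$-manifold in $\mathbb R^{n+2}$ with $R_D(p)=0$ the curvature locus is an $n$-polygon with vertices $(\lambda_i,\mu_i)$, whereas you rederive the $n=2$ case from scratch via the parametrisation $\eta(\theta)=H+\cos 2\theta\,B+\sin 2\theta\,C$ and the observation that the common eigenbasis kills the off-diagonal term $C$. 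Your version is self-contained and makes explicit why the pairing of the four axial curvatures into the two endpoints is canonical (the $v_a^1$- and $v_a^2$-extremes are attained at the same directions $e_1,e_2$), which the paper leaves implicit; the paper's version is shorter and generalises immediately to higher $n$. Both arguments share the same implicit caveat, namely that Theorems \ref{axialprincipal} and \ref{axialprincipal2} are only proved under orbit hypotheses that the corollary does not state explicitly, so you are no less rigorous than the source on that point.
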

\begin{proof}
For a regular $n$-manifold in $\mathbb R^{n+2}$, when $R_D(p)=0$ there is a unique orthonormal basis of eigenvectors $\{X_1,\ldots,X_n\}$ of the shape operator $A_{v}$ for all $v$ normal vectors. In \cite{NunoRomero} it is shown that, in this case the curvature locus is an $n$-polygon (a convex polygon with $n$ sides) such that the vertices are given by $(\lambda_i,\mu_i)$ where $\lambda_i$ and $\mu_i$ are the eigenvalues corresponding to $\{X_1,\ldots,X_n\}$ for $v_1$ and $v_2$, respectively. The result follows from Theorems \ref{axialprincipal} and \ref{axialprincipal2}
\end{proof}

\begin{coro}
Let $M^{3}_{\sing}\subset\mathbb R^{4}$ and suppose that $f$ lies in the orbits $(x,y,xz,z^2)$ or $(x,y,xz,0)$. If $f$ is given in Monge form and $a_{020}^2=0$,then the curvature of the curve $f(0,y,0)$ is equal to $\kappa_{a_2}$ if $f$ lies in the first orbit and equal to $\kappa_{a_1}$ otherwise. In particular, this is the curvature of the curve of cross-caps when $f$ is in the orbit $(x,y,xz,z^2)$ and it is the curvature of the curve of swallowtail points for the corresponding example in the orbit $(x,y,xz,0)$.
\end{coro}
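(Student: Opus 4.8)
The plan is to compute both sides in the reduced Monge form and to match them using the hypothesis $a^2_{020}=0$.

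First I would compute the curvature at the origin of the curve $\gamma(y)=f(0,y,0)=(0,y,f_1(0,y,0),f_2(0,y,0))$. Because $f$ is in Monge form, $f_\ell(0,y,0)=\tfrac12 a^\ell_{020}y^2+O(y^3)$, so $\gamma'(0)=(0,1,0,0)$ is a unit vector and $\gamma''(0)=(0,0,a^1_{020},a^2_{020})=\mathbf{a_{020}}$ is orthogonal to $\gamma'(0)$ and lies in the normal plane $N_pM^2_{\reg}=\langle v_a^1,v_a^2\rangle$. Hence $\gamma''(0)$ is precisely the normal curvature vector $II(e_2,e_2)$ of $M^2_{\reg}$, its tangential part vanishes, and
$$
\kappa(\gamma)=\|\gamma''(0)\|=\|\mathbf{a_{020}}\|=\sqrt{(a^1_{020})^2+(a^2_{020})^2}.
$$

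Next I would read off the relevant axial curvature from Proposition \ref{axialunbound}. Both orbits have the curvature locus unbounded on both sides in exactly one axial direction, the $xz$-direction: this is $v_a^2$ for $(x,y,xz,z^2)$ (there $v_a^1$ is the $z^2$-direction, since $\mathbf{a_{002}}\neq0$) and $v_a^1$ for $(x,y,xz,0)$ (where $II(u_\infty,u_\infty)=0$ and $v_a^1$ is defined as the unbounded direction). Proposition \ref{axialunbound} reduces the corresponding component to $\tfrac12(a^1_{200}x^2+2a^1_{110}xy+a^1_{020}y^2+2a^1_{101}xz)$ and gives that the unique axial curvature in that direction is its $y^2$-coefficient $a^1_{020}$. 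Thus $\kappa_{a_2}=a^1_{020}$ in the first orbit and $\kappa_{a_1}=a^1_{020}$ in the second. The hypothesis $a^2_{020}=0$ now removes the component of $\mathbf{a_{020}}$ along the other axial vector (the $z^2$-direction $v_a^1$ in the first orbit, the bounded direction $v_a^2$ in the second), so $\mathbf{a_{020}}$ points purely along the unbounded one and $\kappa(\gamma)=|a^1_{020}|$; comparing with the previous value yields $\kappa(\gamma)=\kappa_{a_2}$ (first orbit) and $\kappa(\gamma)=\kappa_{a_1}$ (second orbit), up to sign. For the geometric interpretation I would note that in this reduced form the $xz$-component has $\partial_z f_i=a^i_{101}x+\cdots$ with no $yz$ term, so for $(x,y,xz,z^2)$ the singular locus $\{\partial_z f_1=\partial_z f_2=0\}$ is exactly $\{x=0,z=0\}$, along which $f$ has the form of a one-parameter family of cross-caps $(x,z)\mapsto(x,xz,z^2)$; hence $\gamma$ is the curve of cross-caps, and analogously $\gamma$ is the curve of swallowtail points for the stated example in $(x,y,xz,0)$.

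The main obstacle is reconciling the coordinate conventions: the formula $\kappa_{a_i}=a^1_{020}$ of Proposition \ref{axialunbound} requires the further change of source coordinates that removes the $yz$ term from the unbounded component, and this change alters $f(0,y,0)$. I would therefore run the entire argument in that fully reduced Monge form, where one checks that the singular set is precisely $\{x=0,z=0\}$ (so $f(0,y,0)$ really is the distinguished curve) and that the coefficient named $a^2_{020}$ and the curve $\gamma$ are the very ones used in the curvature computation. Granting this alignment, the remainder is the direct computation above.
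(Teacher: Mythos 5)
Your argument is essentially the paper's own proof, which simply observes that the 2-jet of $f(0,y,0)$ is $(0,y,a^1_{020}y^2,0)$ and invokes Proposition \ref{axialunbound}; you have fleshed out the same two steps (curvature of the curve equals $|a^1_{020}|$ once $a^2_{020}=0$, and the unbounded axial direction gives $\kappa_{a_2}=a^1_{020}$ resp.\ $\kappa_{a_1}=a^1_{020}$). Your extra care about the source change needed to reach the reduced form of Proposition \ref{axialunbound}, and about the sign, addresses points the paper leaves implicit, but the route is the same.
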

\begin{proof}
The 2-jet of the curve $f(0,y,0)$ is given by $(0,y,a_{020}^1y^2,0)$. The result now follows by Proposition \ref{axialunbound}.
\end{proof}

The following result gives a formula to calculate the axial curvatures.

\begin{prop}\label{formula-M3}
Let $M^3_{\sing}\subset\mathbb R^{3+k}$ be given by the image of $f$ in Monge form as in Lemma \ref{change}. Suppose $\kappa^1_{a_1}(p)$ and $\kappa^2_{a_1}(p)$ are defined (i.e. finite), then:
\begin{enumerate}
    \item When $(\bar{a}^{k+1}_{020}-\bar{a}^{k+1}_{200})\neq0$. If $\bar{a}^{k+1}_{110}\neq0$, there exist two  primary axial curvatures given by
    $$
    \kappa^i_{a_1}(p)=K_{v^1_{a}}\left(\theta_i,\frac{\pi}{2}\right),  
    $$
    where $\left(\theta_i,\frac{\pi}{2}\right)$ are two critical points of 
    $$
    K_{v^1_a} (w) =\langle \eta(w), v_a\rangle=\bar{a}^{k+1}_{200}\cos^2\theta + \bar{a}^{k+1}_{110}\sin(2\theta) + \bar{a}^{k+1}_{020}\sin^2\theta + \cot^2{\phi}.
    $$ 
    In particular, if $\bar{a}^{k+1}_{110}=0$, then
    $$
    \kappa^1_{a_1}(p)=\bar{a}^{k+1}_{200} \mbox{ and $\kappa^2_{a_1}(p)=\bar{a}^{k+1}_{020}$}.
    $$
    \item  When $\bar{a}^{k+1}_{020}=\bar{a}^{k+1}_{200}$, 
    $$
    \kappa^1_{a_1}(p)=\kappa^2_{a_1}(p)=\bar{a}^{k+1}_{200}-|\bar{a}^{k+1}_{110}|.
    $$
    
  
    \end{enumerate}
\end{prop}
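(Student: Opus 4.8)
The plan is to prove both formulas by a direct critical-point analysis of the primary normal curvature function restricted to the cylinder $C_q$, using the normal form supplied by Lemma \ref{change}. In that form the primary axial vector is $v_a^1=(0,\ldots,0,1)$, so that
$$
K_{v_a^1}(w)=\langle II(w,w),v_a^1\rangle=\bar{a}^{k+1}_{200}x^2+2\bar{a}^{k+1}_{110}xy+\bar{a}^{k+1}_{020}y^2+z^2,
$$
with $x^2+y^2=1$. Parametrising $C_q$ by $x=\cos\theta$, $y=\sin\theta$, $z=\cot\phi$ turns this into the expression displayed in the statement, so the whole problem reduces to locating the critical points of a single explicit function of the two angular variables $(\theta,\phi)$.

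First I would differentiate in $\phi$: since $\partial_\phi K_{v_a^1}=-2\cot\phi\,\csc^2\phi$ vanishes exactly when $\cot\phi=0$, every critical point must lie on $\phi=\tfrac{\pi}{2}$, i.e. on $z=0$. This is precisely the reduction to the circle $C_q\cap u_\infty^{\perp}$ already established in Theorem \ref{axialprincipal}, and it identifies the primary axial curvatures with the critical values of $K_{v_a^1}$ on that circle, equivalently with the eigenvalues of the symmetric matrix $\bigl(\begin{smallmatrix}\bar{a}^{k+1}_{200}&\bar{a}^{k+1}_{110}\\ \bar{a}^{k+1}_{110}&\bar{a}^{k+1}_{020}\end{smallmatrix}\bigr)$. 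Differentiating in $\theta$ then yields the single trigonometric equation $(\bar{a}^{k+1}_{020}-\bar{a}^{k+1}_{200})\sin 2\theta+2\bar{a}^{k+1}_{110}\cos 2\theta=0$, whose solutions locate the critical points; Proposition \ref{numaxial} guarantees that there are no others.

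The case split in the statement then falls directly out of solving this equation, and since $K_{v_a^1}$ restricted to $z=0$ is a bounded trigonometric polynomial both critical values are automatically finite, consistent with the hypothesis. When $\bar{a}^{k+1}_{020}\neq\bar{a}^{k+1}_{200}$ and $\bar{a}^{k+1}_{110}\neq 0$ the equation reads $\tan 2\theta=2\bar{a}^{k+1}_{110}/(\bar{a}^{k+1}_{200}-\bar{a}^{k+1}_{020})$, with two solutions $\theta_1,\theta_2$ in $[0,\pi)$ and corresponding values $K_{v_a^1}(\theta_i,\tfrac{\pi}{2})$; if in addition $\bar{a}^{k+1}_{110}=0$ the equation degenerates to $\sin 2\theta=0$, whose solutions $\theta=0,\tfrac{\pi}{2}$ give exactly $\bar{a}^{k+1}_{200}$ and $\bar{a}^{k+1}_{020}$. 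When $\bar{a}^{k+1}_{020}=\bar{a}^{k+1}_{200}$ the function collapses to $\bar{a}^{k+1}_{200}+\bar{a}^{k+1}_{110}\sin 2\theta+\cot^2\phi$, whose critical points on $\phi=\tfrac{\pi}{2}$ sit at $\theta=\tfrac{\pi}{4},\tfrac{3\pi}{4}$; evaluating there produces the extremal values $\bar{a}^{k+1}_{200}\pm|\bar{a}^{k+1}_{110}|$, the minimal of which is $\bar{a}^{k+1}_{200}-|\bar{a}^{k+1}_{110}|$.

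I expect the only real obstacle to be the careful bookkeeping of these degenerate coincidences. Rewriting $K_{v_a^1}|_{z=0}$ as $\tfrac12(\bar{a}^{k+1}_{200}+\bar{a}^{k+1}_{020})+R\cos(2\theta-\psi)$, with $R=\sqrt{\bigl(\tfrac{\bar{a}^{k+1}_{200}-\bar{a}^{k+1}_{020}}{2}\bigr)^2+(\bar{a}^{k+1}_{110})^2}$, makes the extremal values transparent and handles all subcases uniformly, and I would use the Hessian in the $(\theta,\phi)$ coordinates (where the $\phi$-direction always contributes a positive second derivative) to identify which critical point is the minimum and to check that the values obtained agree with the $v_a^1$-principal curvatures of $M^2_{\reg}$ predicted by Theorem \ref{axialprincipal}.
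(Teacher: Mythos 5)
Your proof follows essentially the same route as the paper's: pass to the normal form of Lemma \ref{change} so that $v_a^1=(0,\ldots,0,1)$, parametrise $C_q$ by $(\cos\theta,\sin\theta,\cot\phi)$, observe that criticality in $\phi$ forces $\phi=\tfrac{\pi}{2}$, and then solve $(\bar{a}^{k+1}_{020}-\bar{a}^{k+1}_{200})\sin 2\theta+2\bar{a}^{k+1}_{110}\cos 2\theta=0$ with the same case split. The only divergence is in case (2), where your amplitude--phase rewriting correctly exhibits both critical values $\bar{a}^{k+1}_{200}\pm|\bar{a}^{k+1}_{110}|$ on the circle $\phi=\tfrac{\pi}{2}$, whereas the paper's proof keeps only the two antipodal angles realising the minimum $\bar{a}^{k+1}_{200}-|\bar{a}^{k+1}_{110}|$; your bookkeeping is the more careful one here and otherwise matches the paper.
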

\begin{proof}
By Lemma \ref{change}, we can consider the primary axial vector as $v_{a_1}=(0,\ldots,0,1)$ and 
$$
\eta(w)=(*,\ldots,*,\bar{a}^{k+1}_{200}\alpha^2 + 2\bar{a}^{k+1}_{110}\alpha\beta + \bar{a}^{k+1}_{020}\beta^2 + \gamma^2),
$$ 
where $w=(\alpha,\beta,\gamma)\in C_q$. Then the axial normal curvature at the direction $v_{a_1}$ is given by 
$$
K_{v_{a}^1} (w) =\langle \eta(w), v_{a}^1\rangle=\bar{a}^{k+1}_{200}\alpha^2 + 2\bar{a}^{k+1}_{110}\alpha\beta + \bar{a}^{k+1}_{020}\beta^2 + \gamma^2
$$ 
As $\alpha^2+\beta^2=1$, we can take  $\alpha=\cos\theta$, $\beta=\sin\theta$, $\gamma=\mathrm{cotan} \phi$, then  
$$
K_{v_{a}^1} (w) =\langle \eta(w), v_a^1\rangle=\bar{a}^{k+1}_{200}\cos^2\theta + 2\bar{a}^{k+1}_{110}\cos\theta\sin\theta + \bar{a}^{k+1}_{020}\sin^2\theta + \mathrm{cotan}^2 \phi
$$ 
The critical points of $K_{v_{a}^1}$ are the points $(\theta,\frac{\pi}{2})$ such that $\theta$ satisfies the equation
$$
(\bar{a}^{k+1}_{020}-\bar{a}^{k+1}_{200})\sin(2\theta)+2\bar{a}^{k+1}_{110}\cos(2\theta)=0.
$$
We have the following cases:
\begin{enumerate}
    \item When $(\bar{a}^{k+1}_{020}-\bar{a}^{k+1}_{200})\neq0$, then $\sin(2\theta)=\frac{2\bar{a}^{k+1}_{110}}{(\bar{a}^{k+1}_{200}-\bar{a}^{k+1}_{020})}\cos(2\theta)$. Thus, if $\cos(2\theta)\neq0$, we obtain  $\theta=\frac12\arctan\left(\frac{2\bar{a}^{k+1}_{110}}{\bar{a}^{k+1}_{200}-\bar{a}^{k+1}_{020}}\right)$. Note that there are four possibles values of $\theta$, which we call $\theta_i$, $i=1,2,3,4$, such that $K_{v_{a}^1}(\theta_i,\pi/2)=\bar{a}^{k+1}_{020}+(\bar{a}^{k+1}_{200}-\bar{a}^{k+1}_{020})\cos^2(\theta_i))+\bar{a}^{k+1}_{110}\sin(2\theta_i))$ is a critical value, but there are only two critical values. 
    Here the axial curvatures are given by
    $$
    \kappa_{a_1}^j=\{K_{v_{a}^1}\left(\theta_i,\frac{\pi}{2}\right)| \mbox{ for $i=1,2,3,4$}\}.
    $$
    In particular, $\bar{a}^{k+1}_{110}=0$ iff $\sin(2\theta)=0$, then $(0,\frac{\pi}{2}),(\pi,\frac{\pi}{2})$ and $(\frac{\pi}{2},\frac{\pi}{2}) ,(\frac{3\pi}{2},\frac{\pi}{2})$ 
    are the critical points, so
    $$
    \kappa^1_{a_1}(p)=\bar{a}^{k+1}_{200} \mbox{ and $\kappa^2_{a_1}(p)=\bar{a}^{k+1}_{020}$}.
   $$
   
   
    \item If $\bar{a}^{k+1}_{020}=\bar{a}^{k+1}_{200}$ and  $\bar{a}^{k+1}_{110}\neq0$, we get 
    $\cos(2\theta)=0$. 
    Thus, 
    if $\bar{a}^{k+1}_{110}>0$,  $(\frac{3\pi}{4},\frac{\pi}{2})$ and $(\frac{7\pi}{4},\frac{\pi}{2})$ are critical points and if $\bar{a}^{k+1}_{110}<0$, the critical points are $(\frac{\pi}{4},\frac{\pi}{2})$ and $(\frac{5\pi}{4},\frac{\pi}{2})$, in both cases  
    $$
    \kappa_{a_1}^1(p)=\kappa^2_{a_1}(p)=\bar{a}^{k+1}_{200}-|\bar{a}^{k+1}_{110}|.
    $$
      In particular, when $\bar{a}^{k+1}_{110}=0$, $(\theta,\pi/2)$ are critical points of $K_{v_{a}^1}$ for all $\theta\in\left[0,2\pi\right)$ and the axial curvature is $\kappa_{a_1}^1=\kappa_{a_1}^2=\bar{a}^{k+1}_{020}=\bar{a}^{k+1}_{200}$.
\end{enumerate}
\end{proof}

\begin{coro}\label{ka2}
Let $M^3_{\sing}\subset\mathbb R^{4}$ be given by the image of $f$ in Monge form as in Lemma \ref{change} when $rank(A)=1$, i.e. $f$ lies in the orbit $(x,y,z^2,0)$, 
then: 
\begin{enumerate}
    \item When $(\bar{a}^{1}_{020}-\bar{a}^{1}_{200})\neq0$.  If $\bar{a}^{1}_{110}\neq0$, there exist at most two secondary axial curvatures given by
    $$
    \kappa^i_{a_2}(p)=K_{v^2_{a}}\left(\theta_i\right),  
    $$
    where $\theta_i$ are critical points of 
    $$
    K_{v^2_a} (w) =\langle \eta(w), v^2_a\rangle=-(\bar{a}^{1}_{200}\cos^2\theta + \bar{a}^{1}_{110}\sin(2\theta) + \bar{a}^{1}_{020}\sin^2\theta).
    $$ 
    In particular, if $\bar{a}^{1}_{110}=0$, then
    $$
    \kappa^1_{a_2}(p)=-\bar{a}^{1}_{200} \mbox{ and $\kappa^2_{a_2}(p)=-\bar{a}^{1}_{020}$}.
    $$
    \item  When $\bar{a}^{1}_{020}=\bar{a}^{1}_{200}$, 
    $$
    \kappa^1_{a_2}(p)=\kappa^2_{a_2}(p)=-(\bar{a}^{1}_{200}-|\bar{a}^{1}_{110}|).
    $$
\end{enumerate}
\end{coro}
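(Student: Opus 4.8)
The plan is to reduce this to the one-variable trigonometric optimisation already carried out in the proof of Proposition~\ref{formula-M3}, now applied to the first normal component instead of the last and with an overall change of sign. Since $f$ lies in the orbit $(x,y,z^2,0)$ we have $\|{\bf a_{002}}\|\neq 0$ and $\rank(A)=1$, so Lemma~\ref{change} applies and, by its final assertion, forces $\bar a^1_{101}=\bar a^1_{011}=0$. Hence the first component of the curvature locus,
$$
\eta_1(w)=\bar a^1_{200}\alpha^2+2\bar a^1_{110}\alpha\beta+\bar a^1_{020}\beta^2,
$$
carries no dependence on $\gamma$, while $II(u_\infty,u_\infty)$ points along the last coordinate, giving $v_a^1=(0,1)$ and $v_a^2=(-1,0)$.

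With this adapted frame the secondary normal curvature function is $K_{v_a^2}(w)=\langle\eta(w),v_a^2\rangle=-\eta_1(w)$. Writing $\alpha=\cos\theta$, $\beta=\sin\theta$, $\gamma=\cot\phi$ as in Proposition~\ref{formula-M3}, I would obtain
$$
K_{v_a^2}(w)=-\bigl(\bar a^1_{200}\cos^2\theta+\bar a^1_{110}\sin(2\theta)+\bar a^1_{020}\sin^2\theta\bigr),
$$
which is independent of $\phi$. Consequently every point of $C_q$ with $\partial K_{v_a^2}/\partial\theta=0$ is critical, and differentiating yields the single equation
$$
(\bar a^1_{020}-\bar a^1_{200})\sin(2\theta)+2\bar a^1_{110}\cos(2\theta)=0,
$$
identical in form to the one appearing in the proof of Proposition~\ref{formula-M3}.

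From here I would repeat the case analysis of that proof. When $\bar a^1_{020}\neq\bar a^1_{200}$ and $\bar a^1_{110}\neq 0$, solving gives $\theta=\tfrac12\arctan\bigl(2\bar a^1_{110}/(\bar a^1_{200}-\bar a^1_{020})\bigr)$ (four values of $\theta$, at most two distinct critical values), yielding $\kappa^i_{a_2}(p)=K_{v_a^2}(\theta_i)$; imposing in addition $\bar a^1_{110}=0$ gives the critical directions $\theta=0,\pi$ and $\theta=\pi/2,3\pi/2$, hence $\kappa^1_{a_2}=-\bar a^1_{200}$ and $\kappa^2_{a_2}=-\bar a^1_{020}$. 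When $\bar a^1_{020}=\bar a^1_{200}$ with $\bar a^1_{110}\neq 0$ the equation forces $\cos(2\theta)=0$, and evaluating $K_{v_a^2}$ at the resulting points produces the single value $-(\bar a^1_{200}-|\bar a^1_{110}|)$. The only genuinely new point relative to Proposition~\ref{formula-M3}, and the step I would flag as the crux, is the vanishing of the $\gamma$-dependence: because in this orbit the $z^2$ (equivalently $\gamma^2$) term is absorbed entirely into the primary axial direction, $K_{v_a^2}$ reduces to a function of $\theta$ alone and there is no $\cot^2\phi$ summand to contend with, so the extraction of the critical values is immediate.
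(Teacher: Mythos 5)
Your proposal is correct and follows essentially the same route as the paper, whose proof is precisely the one-line reduction to Proposition~\ref{formula-M3} after fixing the adapted frame; your identification $v_a^1=(0,1)$, $v_a^2=(-1,0)$ agrees with the paper's worked example (the paper's proof writes $v_a^2=(0,-1)$, an apparent typo, since that is not orthogonal to $v_a^1$), and your explicit appeal to the final clause of Lemma~\ref{change} ($\bar a^1_{101}=\bar a^1_{011}=0$ when $\rank(A)=1$) to kill the $\gamma$-dependence is exactly the point the paper leaves implicit. Note only that in the case $\bar a^1_{020}=\bar a^1_{200}$, $\bar a^1_{110}\neq 0$, all four angles with $\cos(2\theta)=0$ are critical, yielding the two values $-(\bar a^1_{200}\mp|\bar a^1_{110}|)$ rather than the single one you (and the paper, both here and in Proposition~\ref{formula-M3}) report.
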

\begin{proof}
The proof follows from the fact that $v^1_{a}=(0,1)$ and $v_a^2=(0,-1)$ and analogous arguments to the proof of Proposition \ref{formula-M3}.
\end{proof}

\begin{rem}
The obstruction to generalizing the above result for $M^3_{\sing}\subset\mathbb R^{3+k}$ with $k>1$ is the fact that, although with the normal form of Lemma \ref{change} $v_a^1=(0,\ldots,0,1)$, the secondary axial vector may not coincide with one of the axes in that coordinate system.
\end{rem}

\begin{ex} 
\begin{enumerate}
    \item[i)] Consider the frontal 3-manifold given by 
 $(x,y,\frac{1}{2}(a^1_{200}x^2 + a^1_{020}y^2+z^2),\frac{1}{2}(a^2_{200}x^2 + a^2_{020}y^2+yz^3)).$ Here $(x,0,\frac{1}{2}a^1_{200}x^2,\frac{1}{2}a^2_{200}x^2)$ is a curve of cuspidal cross-cap points. Let $\kappa$ be the curvature of this curve, then, by Proposition \ref{formula-M3} and Corollary \ref{ka2} $\kappa^2=(\kappa_{a_1}^1)^2+(\kappa_{a_2}^1)^2$.
   \item[ii)] Consider the frontal 3-manifold given by 
 $(x,y,\frac{1}{2}(a^1_{200}x^2 + a^1_{110}xy + a^1_{020}y^2+2xz+4z^3),\frac{1}{2}(a^2_{200}x^2 + a^2_{110}xy + xz^2+3z^4)).$ Here $(0,y,\frac{1}{2}a^1_{020}y^2,0)$ is a curve of swallowtail points and by Proposition \ref{axialunbound} the curvature of this curve is given by $\kappa_{a_1}$.
\end{enumerate}
\end{ex}

\begin{ex}
Consider $M^3_{\sing}\subset\mathbb R^4$ given by 
$$
f(x, y, z) = (x, y, \frac12x^2+\frac72y^2,\frac32x^2 +xy+\frac12y^2+\frac12z^2)
$$
Observe that $f$ lies in the orbit $(x,y,z^2,0)$ and the primary axial vector is $v^1_a=(0,1)$. Direct calculation shows $(\frac\pi8,\frac\pi2),(\frac{5\pi}{8},\frac\pi2)$ are critical points of $K_{v^1_a}$ and $\kappa^i_{a_1}(p)=1+\sin(2\theta_i)+2\cos^2(\theta_i)$, so for $\theta=\frac\pi8,\frac{5\pi}{8}$  $\kappa^1_{a_{1}}(p)=2+\sqrt{2}$ and $\kappa^2_{a_{1}}(p)=2-\sqrt{2}$, respectively. For the secondary axial vector $v^2_a=(-1,0)$, we get $\theta=0,\frac\pi2$ are critical points of $K_{v^2_a}$. So $\kappa^1_{a_{2}}(p)=-1$ and $\kappa^2_{a_{2}}(p)=-7$.
It follows from Corollary \ref{GaussianCurva} that the Gaussian curvature of the regular surface $f(x,y,0)$ is $$K=\kappa_{a_1}^1\kappa_{a_1}^2+\kappa_{a_2}^1\kappa_{a_2}^2=(2+\sqrt{2})\cdot(2-\sqrt{2})+(-1)\cdot(-7)=2+7=9.$$
\end{ex}

\begin{ex}
Consider $M^3_{\sing}\subset\mathbb R^5$ given by 
$$
f(x, y, z) = (x, y, x^2, y^2,\frac32x^2 +xy+\frac12y^2+\frac12z^2)
$$
The primary axial vector is $v^1_a=(0,0,1)$ and  $(\frac\pi8,\frac\pi2),(\frac{5\pi}{8},\frac\pi2)$ are the critical points of $K_{v^1_a}$. The axial curvatures at the origin are given by $\kappa^i_a(p)=1+\sin(2\theta)+2\cos^2(\theta)$, where $\theta=\frac\pi8,\frac{5\pi}{8}$.
Now, consider $u$ a tangent direction in $T_pM^3_{\sing}$ parametrised by the angle $\gamma\in\left[0,2\pi\right)$, 
then the normal section of $M^3_{\sing}$ along $u$ is a corank 1 surface $M^2_{\sing} = M^3_{\sing} \cap\{u=Y - \tan(\gamma)X = 0\}$ that is locally parametrised by
$$
(x, z)\to (x, \tan(\gamma)x, x^2, \tan^2(\gamma) x^2,
\frac12 \tan^2(\gamma)x^2 + \tan(\gamma) x^2 + \frac32 x^2 + \frac12z^2)
$$
By a rotation of angle $\gamma\neq\pi/2, 3\pi/2$ in the target and the change of coordinates in the source, $(x, z)\mapsto \left(\frac{x}{
\tan^2(\gamma)+1} , z\right)$ we obtain the $M^2_{\sing}\subset\mathbb R^4$ given by $(x, z)\mapsto$
$$ (x, 0,\frac{x^2}{\tan^2(\gamma)+1}, \frac{\tan^2(\gamma)}{\tan^2(\gamma)+1} x^2,
\\ \frac12\frac{(\tan^2(\gamma)+2\tan(\gamma)+3)x^2}{\tan^2(\gamma)+1}+\frac12z^2).
$$
Here, the curvature locus is $\eta(z)=(*,*,\frac{(\tan^2(\gamma)+2\tan(\gamma)+3)}{\tan^2(\gamma)+1}+z^2)$ and the primary axial vector for this singular surface is $v^1_a=(0,0,1)$. Then, using Proposition \ref{axialcurvatureR4}, the primary axial curvature of $M^2_{\sing}$ at the origin is $k_{a_1}(p)=\frac{(\tan^2(\gamma)+2\tan(\gamma)+3)}{\tan^2(\gamma)+1}=1+\sin(2\gamma)+2\cos^2(\gamma)$.
Note that, the primary axial curvature of $M^2_{\sing}$ coincides with  the primary axial curvature of $M^3_{\sing}$ when $\gamma=\theta$. 
\end{ex}
Based on the above example it is natural to ask whether there is a relation between the axial curvatures at $p\in M^3_{\sing}\subset\mathbb R^5$ and at
$p\in M^2_{\sing}\subset\mathbb R^4$ obtained as a normal section of $M^3_{\sing}$.

\begin{teo} Let $M^3_{\sing}$ be a corank 1 3-manifold in $\mathbb R^5$ and consider $M^2_{\sing}\subset\mathbb R^4$ the normal section of $M^3_{\sing}$ along $u\in T_pM^3_{\sing}$. The $i$-ary axial curvatures at $p\in M^3_{\sing}$ (whenever they are defined) are given by the critical values of the $i$-ary axial curvatures of the normal sections varying $u\in T_p M^3_{\sing}$.
\end{teo}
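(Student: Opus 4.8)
The plan is to use that the curvature locus of $M^3_{\sing}$ is the union of the curvature loci of its normal sections, as in the discussion preceding Theorem \ref{axialprincipal}. Writing $f$ in Monge form (\ref{Monge3man}) with $n=3$, $k=2$, the cylinder is $C_q=\{(x,y,z):x^2+y^2=1\}$ and the curvature locus is parametrised by $\eta(\theta,z)=P(\theta)+Q(\theta)z+Rz^2$, where $(x,y)=(\cos\theta,\sin\theta)$, $R=\mathbf{a_{002}}$, $Q(\theta)=2\mathbf{a_{101}}\cos\theta+2\mathbf{a_{011}}\sin\theta$, and $P(\theta)$ collects the remaining, $z$-free, terms. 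The normal section along $u_\gamma=(\cos\gamma,\sin\gamma)$ amounts to freezing $\theta=\gamma$, so the curvature locus of $M^2_{\sing}(\gamma)\subset\mathbb R^4$ is exactly the slice $z\mapsto\eta(\gamma,z)$, a (possibly degenerate) curvature parabola; this is how the surface arises after the target isometry that removes the sectioned coordinate, as in the computation preceding the statement.

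First I would verify that a single adapted frame serves both $M^3_{\sing}$ and all its sections. The null direction $u_\infty=(0,0,1)$ is common to all of them, so whenever $II(u_\infty,u_\infty)=\mathbf{a_{002}}\neq0$ the primary axial vector $v_a^1=\mathbf{a_{002}}/\|\mathbf{a_{002}}\|$ is literally the same normal vector for $M^3_{\sing}$ and for every $M^2_{\sing}(\gamma)$. For $v_a^2$ and $v_a^3$ I would argue orbit by orbit, via Propositions \ref{A2 orbits5} and \ref{types34}: the affine space $Ax_p$ supporting the locus of $M^3_{\sing}$ contains the affine span of each sliced parabola, so the frame $\{v_a^1,v_a^2,v_a^3\}$ of $M^3_{\sing}$ restricts to an adapted frame of each section, with the sign ambiguity already noted after Proposition \ref{axialcurvatureR4}.

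The heart of the argument is a partial-optimisation identity. Fix $v_a^i$ and set $K(\theta,z)=\langle\eta(\theta,z),v_a^i\rangle$, a quadratic in $z$ with leading coefficient $\langle R,v_a^i\rangle$. By definition the $i$-ary axial normal curvature function of the section $M^2_{\sing}(\gamma)$ is $z\mapsto K(\gamma,z)$, so its $i$-ary axial curvature $g_i(\gamma)$ is the critical value of this one-variable function: for the primary direction $\langle R,v_a^1\rangle=\|R\|>0$ and $g_1(\gamma)=K(\gamma,z^*(\gamma))$ is the minimum attained at $z^*(\gamma)=-\langle Q(\gamma),v_a^1\rangle/(2\|R\|)$, while in the degenerate directions $K$ does not depend on $z$ and $g_i(\gamma)=K(\gamma,\cdot)$ outright. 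In every case $\partial_z K(\gamma,z^*(\gamma))=0$, so by the chain rule $g_i'(\gamma)=\partial_\theta K(\gamma,z^*(\gamma))$. Hence $\gamma_0$ is a critical point of $g_i$ precisely when $(\gamma_0,z^*(\gamma_0))$ is a critical point of $K$ on $C_q$; conversely every critical point of $K|_{C_q}$ lies on the ridge $z=z^*(\theta)$ because $K$ is at most quadratic in $z$ (in the constant-in-$z$ case $g_i=K$ and the matching is immediate). Therefore the critical values of $g_i$ coincide with the critical values of $K_{v_a^i}|_{C_q}$, which are by definition the $i$-ary axial curvatures of $M^3_{\sing}$.

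I expect the main obstacle to be the bookkeeping of the adapted frames across the $\mathscr A^2$-orbits of Proposition \ref{A2 orbits5} and their degenerations (segment, half-line, point), where the sliced parabola collapses and the choice of $v_a^2,v_a^3$, together with the sign of the resulting axial curvature, must be matched to the frame of $M^3_{\sing}$. Once the frames are aligned the envelope/chain-rule step is uniform, so the proof reduces to this orbit-by-orbit consistency check, in agreement with the preceding example where $g_1(\gamma)=1+\sin(2\gamma)+2\cos^2\gamma$ has critical values $2\pm\sqrt2$ at $\gamma=\tfrac\pi8,\tfrac{5\pi}8$.
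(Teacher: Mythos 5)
Your proposal is correct and follows essentially the same route as the paper: both decompose the curvature locus of $M^3_{\sing}$ into the curvature parabolas of its normal sections, observe that the restriction of the $i$-ary normal curvature function to the slice $\theta=\gamma$ is the $i$-ary normal curvature function of the section, and identify the critical values of the two-variable function with the critical values of the sectionwise critical values. The only difference is one of detail: your explicit envelope/chain-rule identity $g_i'(\gamma)=\partial_\theta K(\gamma,z^*(\gamma))$ supplies a justification for the final step that the paper states without elaboration.
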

\begin{proof}
Following Theorem 3.3 in \cite{BenediniOset2} the curvature locus of $M^3_{\sing}$ is generated by the union of the curvature parabolas of the normal sections. This is due to the fact that the unitary tangent vectors $C_q'$ of the normal section are given by $C_q\cap dg^{-1}(u)$, where $g:\tilde M\to M^3_{\sing}$ is the corank 1 map germ and $C_q$ is the unitary tangent vectors in $T_q\tilde M$. This way the primary axial vector is the same for the 3-manifold and any normal section, and so we have the relation between the axial curvatures. In fact, the $i$-ary normal axial curvature function $K_{a_i}(\theta,\phi)$ at $\theta=\gamma_0$, where $\gamma_0$ is the angle which parametrises $u$, is precisely the $i$-ary normal curvature function of the normal section given by the angle $\gamma_0$, $K_{a_i}^{\gamma_0}(\phi)$. So the critical values of $K_{a_i}(\theta,\phi)$ are the critical values of the function $h:[0,2\pi[\to \mathbb R$ given by $h(\gamma)=(\text{critical value of   } K_{a_i}^{\gamma}(\phi)\text{   where   } \phi\in [0,2\pi[)$ when $\gamma$ varies in $[0,2\pi[$.
\end{proof}


\section{Relation of axial curvatures and umbilic curvatures}\label{umbs}

Throughout the literature the umbilic curvature has been defined in many different contexts, both for regular and singular manifolds. It was first defined by Montaldi in \cite{Montaldi}  for semi-umbilic points in $M^2_{\reg}\subset \mathbb R^4$. Then it was defined by Mochida, Romero Fuster and Ruas in \cite{MochidaFusterRuas3}  for $M^2_{\reg}\subset \mathbb R^5$. More recently, Deolindo-Silva and Oset Sinha in \cite{Deolindo/Oset}  defined it for $M^3_{\reg}\subset \mathbb R^6$ (in \cite{Binotto/Costa/Fuster}  Binotto, Costa and Romero Fuster mention focal spheres but do not define the umbilic curvature). It has also been defined for singular manifolds, namely, Martins and Nu\~no-Ballesteros in \cite{MartinsBallesteros}  defined it for $M^2_{\sing}\subset \mathbb R^3$ (it was also defined in the frontal context by Martins and Saji in \cite{MartinsSaji} where they called it the limiting normal curvature) and finally, Benedini Riul, Oset Sinha and Ruas for $M^2_{\sing}\subset \mathbb R^4$ in \cite{Benedini/Sinha/Ruas}. The name ``umbilic" is not casual, this curvature defines the centre of a sphere with degenerate contact with the manifold at $p$, also known as an umbilical focus and umbilical focal hyperspheres. For surfaces it is a sphere with corank 2 contact (i.e. corank 2 singularity of the distance squared function), for 3-manifolds it is a sphere with corank 3 contact. This was proved in the previous references when it was defined, except for the case $M^2_{\sing}\subset \mathbb R^4$, where it was proved by Deolindo-Silva and Oset Sinha in \cite{Deolindo/Oset}.

The umbilic curvature has been defined in many different ways but all definitions can be unified in the following way. Given $M^n\subset\mathbb R^{n+k}$ (regular or singular) the umbilic curvature is given by $\kappa_u(p)=d(p,Aff_p)$. If $\dim N_pM>n$ this definition makes sense always, if $\dim N_pM\leq n$, this is only defined when the curvature locus is degenerate. For example, Montaldi only defined this curvature when the curvature locus is a segment. Similarly, Martins and Nu\~no-Ballesteros also defined the umbilic curvature when the curvature locus is a degenerate parabola. However, Benedini Riul, Oset Sinha and Ruas defined it even for non-degenerate parabolas, following the ideas of Mochida, Romero Fuster and Ruas. Another way of defining this curvature is by projecting to a direction perpendicular to $Aff_p$ (since $d(p,Aff_p)$ is the shortest distance from $p$ to points in $Aff_p$).

Focusing now in the singular case, in most cases, the umbilic curvature is one of our axial curvatures.

\begin{prop}
If $n\geq \dim N_pM=k+1$, when $\kappa_u$ is defined, $\kappa_u=|\kappa_{a_i}|$ for some $i=2,\ldots,l$. More precisely, if $Aff_p$ has codimension $r<k+1$ in $N_pM$, then $\kappa_u=|\kappa_{a_{l-r+1}}|$. When $r=k+1$, the curvature locus is a point and $\kappa_u=|\kappa_{a_{2}}|$ by definition. 
\end{prop}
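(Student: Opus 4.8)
The plan is to deduce the statement from the single identity $\kappa_u(p)=d(p,Aff_p)$ together with the way the adapted frame sits relative to $Aff_p$. Since we are assuming $n\ge k+1$, we have $l=\min\{n,k+1\}=k+1=\dim N_pM$, so $Ax_p=N_pM$ and the adapted frame $\{v_a^1,\ldots,v_a^l\}$ is a full orthonormal basis of the normal space. Write $d=\dim Aff_p$; the codimension hypothesis then reads $r=(k+1)-d=l-d$, i.e.\ $d=l-r$, and $\kappa_u$ being defined means $r\ge 1$ (the locus is degenerate). Let $W\subseteq N_pM$ be the direction space of $Aff_p$ (the linear subspace parallel to it) and let $V=\mathrm{span}(Aff_p\cup\{p\})$ be the smallest linear subspace containing both $Aff_p$ and the origin $p$; when $p\notin Aff_p$ one has $\dim W=d$ and $\dim V=d+1$.

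First I would record the structural property of the adapted frame, which is exactly what the explicit constructions for $n=2,3$ in Sections \ref{sups} and \ref{M3} produce: $v_a^1,\ldots,v_a^d$ lie inside $W$ (the primary axial vector is the unbounded/asymptotic direction of $\Delta_p$, which lies in $W$, and the further vectors used to complete fill out $W$), the vector $v_a^{d+1}$ spans the one-dimensional space $V\cap W^{\perp}$, and $v_a^{d+2},\ldots,v_a^l$ span the orthogonal complement of $V$ inside $N_pM$. The point to emphasise is the choice of $V$: exactly as in the $\mathbb R^5$ case one takes ``the plane containing the curvature locus and the origin'', here one completes through $\mathrm{span}(Aff_p\cup\{p\})$, so that all frame vectors beyond $v_a^{d+1}$ are orthogonal to every point of $Aff_p$.

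Next I would compute the axial curvatures indexed by $i>d$. For such $i$ we have $v_a^i\perp W$, so for $w\in C_q$ the value $K_{v_a^i}(w)=\langle II(w,w),v_a^i\rangle=\langle\eta(w),v_a^i\rangle$ is the projection onto $v_a^i$ of a point of $\Delta_p\subseteq Aff_p$; since $Aff_p$ has direction $W$, this projection is the constant $\langle q_0,v_a^i\rangle$ for any fixed $q_0\in Aff_p$. Hence $K_{v_a^i}$ is constant on $C_q$ and its unique critical value is $\kappa_{a_i}=\langle q_0,v_a^i\rangle$. By the structural property, $\langle q_0,v_a^i\rangle=0$ for $i\ge d+2$ (there $v_a^i$ is orthogonal to $V\ni q_0$), whereas $v_a^{d+1}$ is the unit vector along the perpendicular from $p$ to $Aff_p$, so $|\langle q_0,v_a^{d+1}\rangle|=d(p,Aff_p)=\kappa_u$. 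Therefore $\kappa_u=|\kappa_{a_{d+1}}|=|\kappa_{a_{l-r+1}}|$, which is the asserted index since $d=l-r$. The case $r=k+1$ is separate because then $d=0$ and $\Delta_p$ is a single point $y$: the direction space $W$ is trivial, so no $v_a^1$ lies in it, and the defining convention fixes $v_a^1\perp y$ and $v_a^2=y/|y|$; this gives $|\kappa_{a_2}|=|\langle y,v_a^2\rangle|=\|y\|=d(p,\{y\})=\kappa_u$.

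I expect the main obstacle to be the first step: the paper constructs the adapted frame explicitly only for $n=2,3$, so for general $n\ge k+1$ one must argue that the natural completion respects the filtration $W\subseteq V\subseteq N_pM$ with $v_a^{d+1}$ aligned to the perpendicular foot from $p$ to $Aff_p$. The genuinely delicate point, invisible when $r=1$ (the perpendicular space is then one-dimensional, so any completion works), is that for $r\ge 2$ a single axial curvature $\kappa_{a_{d+1}}$ must absorb the whole distance; this is precisely what forces the remaining frame vectors into the orthogonal complement of $\mathrm{span}(Aff_p\cup\{p\})$, the role played by choosing ``the plane through the locus and the origin'' in the three-dimensional constructions.
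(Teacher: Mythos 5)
Your proposal is correct and follows essentially the same route as the paper's proof: identify $Ax_p=N_pM$, use $\kappa_u=d(p,Aff_p)$, and exploit the fact that the adapted frame is compatible with $Aff_p$ so that the perpendicular foot direction is the axial vector $v_a^{l-r+1}$ while the remaining frame vectors beyond it give vanishing projections. Your version is merely more explicit than the paper's about the filtration $W\subseteq \mathrm{span}(Aff_p\cup\{p\})\subseteq N_pM$ and about the fact that this structural property of the frame is only constructed case by case for $n=2,3$, a point the paper's proof also implicitly assumes.
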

\begin{proof}
We have defined our adapted frame only in $Ax_p$, and so we only have $l$ axial vectors. If $n\geq \dim N_pM=k+1$, then $l=k+1$, and so $Ax_p=N_pM$. In this case, $\kappa_u$ is only defined when the curvature locus is degenerate, i.e. $Aff_p$ is not the whole $N_pM$. So $\kappa_u$ is the projection of the curvature locus onto a direction perpendicular to $Aff_p$. The way in which the adapted frame has been defined, this direction is one of the axial vectors. In fact, when $Aff_p$ is a hyperplane, this direction is $v_a^l$ and so $\kappa_u=|\kappa_{a_l}|$. If $Aff_p$ has codimension 2 in $N_pM$ then this direction is $v_a^{l-1}$, $\kappa_u=|\kappa_{a_{l-1}}|$ and $\kappa_{a_l}=0$. This goes on until codimension $k$. When $r=k+1$, the curvature locus is a point and $\kappa_u=|\kappa_{a_{2}}|$ by definition (there is no need for the absolute value here because $\kappa_{a_2}$ is defined as norm in this case). 
\end{proof}

If $n<\dim N_pM=k+1$ there are less axial vectors than the dimension of the normal space, and so the umbilic curvature may be given by a projection to a normal direction which is not in $Ax_p$. However, consider the $(k+1-l)$-vector space of orthogonal directions to the directions in $Ax_p$. The intersection of this vector space and $Ax_p$ is a point. Consider the direction between this point and $p$. We call the unitary vector in this direction $v_a^{l+1}$. Notice that the projection of the curvature locus onto this direction is constant. We call this constant $\kappa_{v_a^{l+1}}$. 

\begin{prop}
If $n<\dim N_pM=k+1$ then $\kappa_u=|\kappa_{v_a^{l+1}}|$ when $\dim Aff_p=l$ and $\kappa_u=|\kappa_{a_i}|$ for some $i=2,\ldots,l$ otherwise.
\end{prop}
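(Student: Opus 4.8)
The plan is to split along the two cases in the statement, reducing the first to the defining property of $v_a^{l+1}$ and the second to the preceding proposition applied inside $Ax_p$. Throughout I identify $p$ with the origin of $N_pM$ and use that $\kappa_u=d(p,Aff_p)$ is attained at the orthogonal projection $c$ of the origin onto $Aff_p$, so that $|c|=\kappa_u$ and the segment from $p$ to $c$ is orthogonal to the direction space of $Aff_p$.

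For the case $\dim Aff_p=l$ I would argue that $Ax_p=Aff_p$ and that, writing $V$ for its direction space and $U=V^\perp$ for the $(k+1-l)$-space used to define $v_a^{l+1}$, the intersection point $Ax_p\cap U$ is exactly the foot $c$. Since the whole curvature locus lies in $Aff_p=c+V$ and $v_a^{l+1}$ is the unit vector along $c$, the projection of any $c+v$ with $v\in V$ onto $v_a^{l+1}$ equals $\langle c,v_a^{l+1}\rangle=|c|$, independently of $v$; hence $\kappa_{v_a^{l+1}}=\pm|c|$ and $|\kappa_{v_a^{l+1}}|=|c|=\kappa_u$.

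For the case $\dim Aff_p<l$, where $Ax_p$ is a genuine $l$-dimensional vector subspace of $N_pM$ through $p$ with orthonormal basis $\{v_a^1,\dots,v_a^l\}$, I would note that the foot $c$ still lies in $Ax_p$, so the shortest-distance direction belongs to $Ax_p$ and is perpendicular to the direction space of $Aff_p$. This is exactly the situation of the preceding proposition with $N_pM$ replaced by $Ax_p$, and I would conclude as there that this direction is one of the axial vectors $v_a^i$ with $i\geq 2$ (for instance $v_a^2=y/|y|$ when the locus is a point), giving $\kappa_u=|\kappa_{a_i}|$.

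The hard part will be justifying, in the second case, that the shortest-distance direction is literally one of the axial vectors rather than merely some element of $Ax_p$ perpendicular to $Aff_p$. This is not forced by the abstract definition of the adapted frame; it relies on the explicit case-by-case constructions of Sections \ref{sups} and \ref{M3}, where the $v_a^i$ are chosen aligned with the affine hull of the locus (axes of the ellipse, direction of a segment, $y/|y|$ for a point, perpendiculars to the plane or hyperplane containing the locus). I expect to check that this ordering places the spanning directions of $Aff_p$ first and the perpendicular ones last, so that exactly one of them hits the foot $c$ and the others contribute zero axial curvature --- the same subtlety already handled in the previous proposition.
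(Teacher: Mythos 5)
Your proposal is correct and follows essentially the same route as the paper: when $\dim Aff_p=l$ one has $Ax_p=Aff_p$ and $|\kappa_{v_a^{l+1}}|=d(p,Aff_p)=\kappa_u$ directly from the definition of $v_a^{l+1}$, and when $\dim Aff_p<l$ one reduces to the preceding proposition with $N_pM$ replaced by $Ax_p$ (which contains both $Aff_p$ and $p$). The subtlety you flag --- that the shortest-distance direction is literally one of the $v_a^i$ rather than an arbitrary vector of $Ax_p$ perpendicular to $Aff_p$ --- is exactly the point the paper also settles only by appeal to the case-by-case construction of the adapted frame, so your treatment matches the paper's.
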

\begin{proof}
In this case, $l=n$. When $\dim Aff_p< l$, then $Ax_p$ contains $Aff_p$ and $p$, so, similarly to the above Proposition, $\kappa_u=|\kappa_{a_i}|$ for some $i=2,\ldots,l$. When $\dim Aff_p=l$, then $Ax_p=Aff_p$ and so $d(p,Aff_p)$ coincides with $|\kappa_{v_a^{l+1}}|$ by definition. 
\end{proof}

\begin{ex}
In \cite{Benedini/Sinha/Ruas} the umbilic curvature $\kappa_u$ was defined for $M^2_{\sing}\subset \mathbb R^4$. Here $Ax_p$ is a plane with adapted frame $\{v_a^1,v_a^2\}$. When the curvature parabola is degenerate (a half-line, a line or a point) $\kappa_u=|\kappa_{a_2}|$. When the curvature parabola is non-degenerate, $\kappa_u$ is the height of $Aff_p=Ax_p$ and so $\kappa_u=|\kappa_{v_a^3}|$ as defined above. 
\end{ex}

\begin{rem}
In the same way as for all other situations where the umbilic curvature has been defined, a geometric interpretation of the umbilical curvature for $M^3_{\sing}\subset \mathbb R^5$ can be given. For example, when $\dim Aff_p=2$ (i.e. the curvature locus is a planar region), then there exists a unique umbilical focus at $p$ given by $$a=p+\frac{1}{\kappa_u}v_a^3.$$ In fact, here $\kappa_u=|\kappa_{v_a^3}|$. Similar results can be obtained when $\dim Aff_p<2$, although instead of one umbilical focus there may be a line of umbilical foci. The proof relies on the analysis of the Hessian of the distance squared function and is analogous to the proof of Proposition 6.6 in \cite{Deolindo/Oset}. 
\end{rem}

\end{document}